\newcommand{\s}[1]{{\mathcal #1}}
\newcommand{\bb}[1]{{\mathbb #1}}
\DeclareMathOperator{\Div}{div}
\newtheorem{theorem}{Theorem} 
\newtheorem{corollary}[theorem]{Corollary}
\newtheorem{lemma}[theorem]{Lemma}
\newtheorem{proposition}[theorem]{Proposition}
\newtheorem{definition}[theorem]{Definition}
\newtheorem{remark}[theorem]{Remark}
\numberwithin{equation}{section}
\numberwithin{theorem}{section}
\newcounter{step}
\begin{document}

	\title[MFGC Fractional Laplacian]
	{Mean Field Games of Controls with Fractional Laplacian}
	
	\author{P. Jameson Graber}
	\thanks{The authors are grateful to be supported by National Science Foundation through NSF Grant DMS-2045027.}
	\address{Baylor University, Department of Mathematics;\\
		Sid Richardson Building\\
		1410 S.~4th Street\\
		Waco, TX 76706
	}
	\email{Jameson\_Graber@baylor.edu}

    \author{Elizabeth Matter}

    \email{ellie\_carr3@baylor.edu}
 
	\author{Jesus Ruiz Bolanos}

    \email{feu25@txstate.edu}
	
	\subjclass[2020]{35Q89}
	\date{\today}   
	
	\begin{abstract}
		We analyze a fractional mean field game of controls system, showing existence of solutions when the order of the fractional Laplacian is $s\in\del{\frac{1}{2},1}$. Here the running cost depends on the distribution $\mu$ of not only the states but also optimal strategies. The coupling is assumed to satisfy the Lasry-Lions monotonicity condition. We derive three types of a priori estimates on solutions. First, we use the monotonicity condition to derive moment estimates on $\mu$. Second, we derive abstract estimates on fractional parabolic equations and apply them to the mean field game. Third, we derive new estimates on the time regularity of the distribution $\mu$ by analyzing the associated L\'evy process. We apply these estimates and the Leray-Schauder fixed point theorem to establish existence of solutions.
	\end{abstract}
	
	\keywords{mean field games of controls, fractional Laplacian}
	
	\maketitle
	
	
	\section{Introduction}
	The purpose of this article is to analyze a system of partial differential equations having the form
	\begin{equation}\label{eq:MFG}
    \begin{cases}
	(i)& -\partial_tu + (-\Delta)^s u + H(x,D u,\mu) = 0, \quad {\text{in}}\ \bb{T}^d\times(0,T),\\
	(ii)& \partial_t m + (-\Delta)^s m -\Div (D_p H(x,D u,\mu)m)=0, \quad {\text{in}}\ \bb{T}^d\times(0,T),\\
	(iii)& \mu(t) = \del{I_d,-D_p H(x,D u,\mu) }\sharp m(t), \quad \text{in} \ [0,T],\\
     (iv)& m(x,0)=m_0(x),\ \  u(x,T)= u_T(x), \quad {\rm{in}}\ \bb{T}^d.
     \end{cases}
	\end{equation}
    System \eqref{eq:MFG} corresponds to a type of mean field game, or MFG.
    Briefly, this means there exists a continuum of agents whose individually solve an optimal control problem whose value function is given by $u$, while the population density is modeled by $m$.
    We recall that mean field games were introduced in \cite{lasry07,huang2006large,huang2007large}, and the interested reader may refer to several monographs and lecture notes for an overview \cite{carmona2017probabilistic,carmona2017probabilisticII,bensoussan2013mean,gomes2014mean,cardaliaguet2010notes,cardaliaguet2021cetraro}.
    In the case of System \eqref{eq:MFG}, the coupling is through the probability distribution $\mu(t)$, which is the distribution of both states and controls of all agents.
	For that reason, this system is referred to as a mean field game of controls, or MFGC \cite{cardaliaguet2017mfgcontrols,kobeissi2021classical,kobeissi2022mean}, though in previous literature such systems have also been called ``extended mean field games'' \cite{gomes2014extended,gomes2016extended}.
	What distinguishes MFGC from other MFG is seen in Equation \eqref{eq:MFG}$(iii)$: the optimal control problem for each individual depends on the distribution not only of states but also of all optimal strategies.
	All MFG involve a fixed point problem: for every distribution there is an optimal strategy, for every strategy there is a distribution, and there is equilibrium precisely when these coincide.
	For MFGC, the distribution is given not only by $m(t)$ but also by $\mu(t)$ defined in Equation \eqref{eq:MFG}$(iii)$, which can be seen as an additional fixed point problem that must be solved before resolving the larger problem of equilibrium.
    
    Compared to the extensive literature on mean field games systems, there are relatively few general results on mean field games of controls.
    Kobeissi proved the existence and uniqueness of classical solutions to MFGC in the uniformly parabolic setting (that is, with a Laplacian in place of the fractional Laplacian seen in \eqref{eq:MFG} above). See \cite{kobeissi2021classical,kobeissi2022mean}, where the first study makes use of ``smallness'' conditions to prove well-posedness, while the latter makes extensive use of the Lasry-Lions monotonicity condition for mean field games of controls; in this direction see also \cite[Section 5]{cardaliaguet2017mfgcontrols}.
    We also refer to \cite{gomes2014extended,gomes2016extended} for some early results in this direction.
    More recent works have treated the uniformly parabolic case on bounded domains with Dirichlet or Neumann boundary conditions \cite{bongini2024mean,graber2025meanfieldgamescontrols}.
    Mean field games of controls also have natural applications to economics \cite{gueant2011mean,achdou2014partial}.
    Several works, including by the first author, have been written on a mean field games model for Cournot competition, which is a natural example of a mean field game of controls \cite{chan2015bertrand,chan2017fracking,graber2018existence,graber2020commodities,graber2018variational,graber2021nonlocal,graber2022parameter,graber2022master,camilli2025learning}.

    Most of these references deal with second-order parabolic models, modeling stochastic games with Brownian motion, or else first-order models modeling deterministic games (except for \cite{graber2021nonlocal}, on a Cournot model with both Brownian motion and jumps). 
    In this article, we are interested in studying a mean field game of controls with a nonlocal diffusion, which models a game where the state of players can jump; see e.g.~\cite{chan2017fracking}.
    We have chosen the fractional Laplacian as a benchmark case; more general nonlocal diffusion generators would be interesting for applications.
    Mean field games with a fractional Laplacian have been studied by a few authors.
    Cesaroni et al.~proved the existence of classical solutions to a stationary model \cite{cesaroni2017stationary}, while Cirant and Goffi proved a similar result for time-dependent problems in \cite{cirant2019existence}; both results are set on the torus, as in our case.
    Ersland and Jakobsen proved the existence of classical solutions to mean field games with nonlocal diffusion on the whole space \cite{ersland2021fractionalmfg}, and Chowdhury et al.~have proved similar results for fully nonlinear systems \cite{chowdhury2024fully}.
    See also some related results on the master equation \cite{jakobsen2023master}, long-time behavior \cite{ersland2025long}, and numerics \cite{chowdhury2023numerical}.

	As far as we know, this is the first article on a general class of mean field games of controls with a nonlocal diffusion term.
    Our goal is to prove existence of classical solutions to \eqref{eq:MFG}, in the case where $\frac{1}{2} < s < 1$.
    To prove this, we will apply the Leray-Schauder fixed point theorem, which relies mainly on first proving a sequence of a a priori estimates.
    We draw inspiration from the results of Kobeissi on MFGC with monotone couplings \cite{kobeissi2022mean}, but certain regularity estimates are not feasible when only the fractional Laplacian is present.
    To adapt to this setting, we have found many techniques found in \cite{cirant2019existence} very helpful.

    The remainder of this article is organized as follows.
    In Section \ref{sec:prelim}, we introduce various basic notions and abstract results on fractional parabolic equations that will be useful in what follows.
    Section \ref{sec:a priori} is the heart of our study, where we prove a priori estimates on classical solutions of \eqref{eq:MFG}.
    In Section \ref{sec:existence} we apply the Leray-Schauder fixed point theorem to prove existence of solutions.
    Finally, we prove uniqueness employing standard arguments based on the Lasry-Lions monotonicity condition.

	\section{Preliminaries} \label{sec:prelim}
	
	We denote by $\bb{R}^d$ the standard $d$-dimensional Euclidean space.	
	By $\bb{T}^d$ we mean the $d$-dimensional torus $\bb{R}^d/\bb{Z}^d$, which can also be embedded into $\bb{R}^d$ as $\bb{T}^d = [0,1]^d$ with periodic boundary conditions. We define $Q_T:=\bb{T}^d\times[0,T].$

    Throughout this manuscript, when deriving estimates, we will use $C$ as a placeholder for some generic constant not depending on the unknowns, whose precise value can change from line to line.
    \subsection{Spaces of probability measures}		

We denote by $\mathcal{P}(\mathbb{T}^d)$ the set of all (Borel) probability measures on $\bb{T}^d$.
We denote by $\s{P}_q(\bb{R}^d)$ the set of all probability measures on $\bb{R}^d$ having finite $q$th moment, i.e.~such that $\int_{\bb{R}^d} \abs{x}^q\dif m(x) < \infty$.
On $\s{P}_q(\bb{R}^d)$ or on $\s{P}(\bb{T}^d)$ we can define a metric, the $q$-Wasserstein distance, given by
\begin{equation}
    W_q(m_1,m_2) = \inf \cbr{\int \abs{x-y}^q \dif \pi(x,y) : \pi \in \Pi(m_1,m_2)}^{1/q}
\end{equation}
where $\Pi(m_1,m_2)$ denotes the set of all couplings of $m_1$ and $m_2$, i.e.~the set of all probability measures $\pi$ on $\bb{R}^d \times \bb{R}^d$ (or $\bb{T}^d \times \bb{T}^d$) whose first and second marginals are $m_1$ and $m_2$, respectively.
Equivalently,
\begin{equation}
    W_q(m_1,m_2) = \inf \cbr{\bb{E}\abs{X-Y}^q : X \sim m_1, \ Y \sim m_2}^{1/q}
\end{equation}
where $X \sim m$ means $X$ is  random variable whose law is $m$. 
A special case is the 1-Wasserstein distance, which is also given by
\begin{equation}
	W_1(m_1,m_2) = \sup\cbr{\int \phi(x)\dif (m_1 - m_2)(x) : \enVert{\nabla \phi}_\infty \leq 1}.
\end{equation}


We denote by
$\s{P}_2\left(\bb{R}^d\right)$
the subset of
$\s{P}\left(\bb{R}^d\right)$
of probability measures
with finite second moments,
and
$\s{P}_{\infty}\left(\bb{R}^d\times\bb{R}^d\right)$
the subset of measures $\mu$ in $\s{P} \left(\bb{R}^d\times\bb{R}^d\right)$
with a second marginal compactly supported.
For $\mu\in\mathcal{P}_{\infty}\left(\bb{R}^d\times\bb{R}^d\right)$
and $\tilde{q}\in[1,\infty)$,
we define the quantities
$\Lambda_{\tilde{q}}(\mu)$ and $\Lambda_{\infty}(\mu)$ by,
\begin{equation}
    \label{eq:defL}
\begin{aligned}
    \Lambda_{\tilde{q}}(\mu)
    &=
    \left(\int_{\bb{R}^d\times\bb{R}^d}
    \abs{\alpha}^{\tilde{q}}\dif\mu\left( x,\alpha\right)\right)^{\frac1{\tilde{q}}},
    \\
    \Lambda_{\infty}(\mu)
    &=
    \sup\left\{\abs{\alpha},
    (x,\alpha)\in{\rm supp }\mu\right\}.
\end{aligned}
\end{equation}
For $R>0$, we denote by
$\s{P}_{\infty,R}\left(\bb{R}^d\times\bb{R}^d\right)$
the subset of measures $\mu$ in
$\s{P}_{\infty}\left(\bb{R}^d\times\bb{R}^d\right)$
such that $\Lambda_{\infty}\left(\mu\right)\leq R$.
For $m\in\s{P}\left(\bb{R}^d\right)$,
we call
$\s{P}_m\left(\bb{R}^d\times\bb{R}^d\right)$
the set of all measures $\mu\in\s{P}\left(\bb{R}^d\times\bb{R}^d\right)$
such that there exists
$\alpha^{\mu}\in L^{\infty}\left( m\right)$
satisfying
$\mu = (I_d,\alpha^{\mu})\#m$.
Here, $\Lambda_{\tilde{q}}(\mu)$ and $\Lambda_{\infty}(\mu)$
defined in \eqref{eq:defL} are given by
\begin{equation*}
    \Lambda_{\tilde{q}}(\mu)
    =
    \norm{\alpha^{\mu}}_{{L^{\tilde{q}}(m)}},
    \;\;
    \Lambda_{\infty}(\mu)
    =
    \norm{\alpha^{\mu}}_{{L^{\infty}(m)}}.
\end{equation*}

\subsection{Function spaces}
	For any measurable subset $E$ of $\bb{R}^d$, the space $L^p(E)$ denotes the usual Lebesgue space with norm $(\int_E \abs{f}^p)^{1/p}$ for $1 \leq p < \infty$ and $\operatorname{ess sup} \abs{f}$ for $p = \infty$.
	On any metric space $X$, the space $\s{C}^0(X)$ is the set of all bounded continuous real-valued functions on $X$, which has the norm $\enVert{f}_0 = \sup_{x \in X} \abs{f(x)}$.
	
	For any closed set $E \subset \bb{R}^d$, we will now define the H\"older spaces $\s{C}^\beta(E)$. Let $\beta\in(0,1]$ and $k$ a nonnegative integer. $u:E\to \bb{R}$ is in $\s{C}^{k+\beta}(E)$ if $u\in \s{C}^{k}(E)$ and 
	\begin{equation*}
	    [D^i u]_{\s{C}^\beta} :=\sup_{x\ne y \in E} \dfrac{\abs{D^i u(x)-D^i u(y)}}{\abs{x-y}^\beta}<\infty
	\end{equation*}
	for each multiindex $i$ with $\abs{i}=k$.
	
	We can define for $\alpha,\beta \in (0,1)$ the H\"older space $\s{C}^{\alpha,\beta}(E \times [a,b])$ for any closed set $E \subset \bb{R}^d$ and any $-\infty \leq a < b \leq \infty$ as the space of continuous functions $u$ such that 	
	$$[u]_{\s{C}^{\alpha,\beta}}:=[u]_{\s{C}_x^\alpha}+[u]_{\s{C}_t^\beta}<\infty$$	
	where	
	$$[u]_{\s{C}_x^\alpha}:=\sup_{t\in[a,b]}[u(\cdot,t)]_{\s{C}^\alpha}, \qquad [u]_{\s{C}_t^\beta}:=\sup_{x\in  E}[u(x,\cdot)]_{\s{C}^\beta}.$$
	
	We also use the standard definitions of Sobolev spaces. $W^{k,p}(E)$ are functions in $L^p(E)$ with weak derivatives up to degree $k$ in $L^p(E)$. This space has as norm:
	
	\begin{equation}
		\enVert{u}_{W^{k,p}}=
		\begin{cases}
			\sum_{\abs{\alpha}\leq k} \enVert{ D^\alpha u}_{L^p}, & 1\leq p<\infty\\
			\sum_{\abs{\alpha}\leq k} \enVert{ D^\alpha u}_{L^\infty}, & p=\infty
		\end{cases}
	\end{equation}

	We also use the following \emph{fractional} Sobolev spaces. For $k\geq0$ $W^{k,p}(E)$ are functions in $L^p(E)$ with (distributional) derivatives up to degree $k$ in $L^p(E)$.
	 See, e.g.~\cite{di2012hitchhikerʼs}.

For $\mu\in \bb{R}$ and $p\in(1,\infty)$, the Bessel potential space $H_p^\mu(\bb{T}^d)$ is the space of all distributions $u$ such that $(I-\Delta)^{\frac{\mu}{2}}u \in L^p(\bb{T}^d)$, where:

$$(I-\Delta)^{\frac{\mu}{2}}u(x)=\sum_{k\in \bb{Z}^d} (1+4\pi^2\vert k\vert^2)^{\frac{\mu}{2}}\widehat{u}(k)e^{2\pi ik\cdot x}, \qquad \widehat{u}(k)=\int_{\bb{T}^d} u(x)e^{-2\pi i k\cdot x}dx$$

These spaces are given the norm:

$$\norm{u}_{\mu,p}:= \norm{ (I-\Delta)^{\frac{\mu}{2}} u}_p$$

Let $\mu\in \bb{R}$, $p\in(1,\infty)$. Denote $\mathbb{H}_p^\mu(Q):=L^p(0,T;H_p^\mu(Q))$ the space of measurable functions $u:(0,T)\to H_p^\mu(Q)$ for which the following norm is finite:

$$\lVert u \rVert_{\mathbb{H}_p^\mu(Q)}=\left( \int_0^T  \lVert u(\cdot,t) \rVert_{\mu,p}^p dt \right)^{\frac{1}{p}}. $$

We will also refer to the space $\mathcal{H}^\mu_p(Q)$ as the space of functions $u\in\bb{H}^\mu_p(Q)$ with $\partial_tu\in\bb{H}^{\mu-2s}_p(Q)$ with finite norm
$$\enVert{u}_{\mathcal{H}_p^\mu(Q)}:=\enVert{u}_{\bb{H}_p^\mu(Q)}+\enVert{\partial_tu}_{\bb{H}^{\mu-2s}_p(Q)},$$
as in \cite{chang2012stochastic}.
\subsection{The Fractional Laplacian}
Let $u:\bb{T}^d\to \bb{R}$, $s\in(0,1)$. The fractional Laplacian on the torus is given by

$$(-\Delta_{\bb{T}^d})^s u(x):=(2\pi)^{2s} \sum_{k\in \bb{Z}^d} \vert k\vert^{2s} \widehat{u}(k)e^{2\pi i k\cdot x}$$

For simplicity, it is written $(-\Delta)^s$.

Alternatively we may define it using the integral definition (cf.~\cite{garroni1992green,applebaum2009levy,di2012hitchhikerʼs}).
For $u:\bb{T}^d\to\bb{R}$, i.e.~$u:\bb{R}^d \to \bb{R}$ with $u(x+\kappa) = u(x)$ for all $\kappa \in \bb{Z}^d$, we can define $(-\Delta)^s u$ as follows
    \begin{equation}
        (-\Delta)^su(x)=C(d,s)P.V.\int_{\bb{R}^d}\frac{u(x)-u(y)}{\abs{x-y}^{d+2s}}\dif y,
        \quad C(d,s) := \del{\int_{\bb{R}^d} \frac{1- \cos(y_1)}{\abs{y}^{d+2s}}\dif y}^{-1}.
    \end{equation}
If $u \in \s{C}^2$, then we can rewrite the integral as
    \begin{equation}
        (-\Delta)^su(x)=-\frac{1}{2}C(d,s)P.V.\int_{\bb{R}^d}\frac{u(x+y)+u(x-y)-2u(x)}{\abs{y}^{d+2s}}\dif y,
    \end{equation}
and by taking the Fourier transform we see that $\widehat{(-\Delta)^su}(\kappa) = (2\pi \abs{\kappa})^{2s}\hat u(\kappa)$, so that the two definitions are equivalent.
From the integral definition we see that $(-\Delta)^s$ is the generator of the transition semigroup of a $d$-dimensional, $2s$-stable pure jumps L\'evy process with associated L\'evy measure given by $\nu(\dif z)=C(d,s)\frac{\dif z}{|z|^{d+2s}}.$

Now let $T(t)$ be the semigroup generated by the fractional Laplacian. Then $T(t)$ is a semigroup of contractions on $L^p$ for all $p \in [1,\infty]$, i.e.
    \begin{equation}
        \enVert{T(t) f}_p\leq \enVert{f}_p, \quad \forall p\in[1,\infty].
    \end{equation}
    Note for any $f\in \s{C}^\infty(\bb{T}^d)$ and multi-indices $k,m,$
    \begin{equation}
        \enVert{D^{k+m}T(t)f}_p\leq Ct^{-\frac{k}{2s}}\enVert{D^mf}_p \quad \forall p\in[1,\infty].
    \end{equation}
    These properties generalize to the following lemma which can be found in \cite{cirant2019existence}.
    \begin{lemma}\label{lem: sgbounds}
        \begin{enumerate}
            \item For any $p>1$ and $\nu\in\bb{R},\gamma\geq 0$, we have for all $f\in H_p^\nu(\bb{T}^d)$
            $$\enVert{T(t) f}_{\nu+\gamma,p}\leq Ct^{-\gamma/2s}\enVert{f}_{\nu,p},$$
            where $C=C(\nu,\gamma,d,s,p)$.
            \item For any $\theta\in[0,s]$ and $p>1$, there exists a constant $C=C(d,s,p,\theta)$ such that, for all $f\in H_p^{2\theta}(\bb{T}^d)$, it holds that
            $$\enVert{T(t)f-f}_p\leq Ct^{\theta/s}\enVert{f}_{2\theta,p}.$$
        \end{enumerate}
    \end{lemma}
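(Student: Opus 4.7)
The plan is to prove both parts by reducing them to $L^p(\bb{T}^d)$ bounds on Fourier multipliers. Since each Bessel potential $(I-\Delta)^{\mu/2}$ is itself a Fourier multiplier on $\bb{T}^d$ that commutes with $T(t)$ and defines an isometric isomorphism $H_p^{\nu+\mu}(\bb{T}^d)\to H_p^\nu(\bb{T}^d)$, part (1) reduces to the $L^p$ estimate
$$\enVert{(I-\Delta)^{\gamma/2} T(t) g}_p \leq C t^{-\gamma/2s}\enVert{g}_p, \qquad g\in L^p(\bb{T}^d),$$
and part (2) admits an analogous reformulation in which $(I-\Delta)^{\gamma/2}T(t)$ is replaced by $T(t)-I$ acting between Bessel potential spaces.

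For part (1), I would combine two standard facts. First, $-(-\Delta)^s$ generates a bounded analytic semigroup of contractions on $L^p(\bb{T}^d)$, which can be read off from Mihlin's multiplier theorem applied to the sectorial symbol $e^{-t(2\pi|k|)^{2s}}$. The moment inequality for fractional powers of sectorial generators then yields $\enVert{(-\Delta)^{s\alpha}T(t)}_{L^p\to L^p}\leq Ct^{-\alpha}$ for every $\alpha\geq 0$. Taking $\alpha=\gamma/(2s)$ and passing from $(-\Delta)^{\gamma/2}$ to $(I-\Delta)^{\gamma/2}$ via the $L^p$-boundedness of the bounded-symbol multiplier $(1+4\pi^2|k|^2)^{\gamma/2}(1+(2\pi|k|)^{2s})^{-\gamma/(2s)}$ (and handling the zero-frequency mean separately on the torus) gives the claim. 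A more hands-on alternative is to bound the $L^1(\bb{T}^d)$ norm of the convolution kernel of $(I-\Delta)^{\gamma/2}T(t)$ by periodizing the corresponding $\bb{R}^d$ kernel, exploiting the scaling $p_t(x)=t^{-d/2s}p_1(t^{-1/2s}x)$ of the $2s$-stable density, and then applying Young's inequality.

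For part (2), I would prove the extreme cases and interpolate. The bound $\enVert{T(t)f-f}_p\leq 2\enVert{f}_p$ at $\theta=0$ is immediate from contractivity. At $\theta=s$, any $f\in H_p^{2s}(\bb{T}^d)$ lies in the $L^p$-domain of the generator, so the fundamental theorem of calculus for strongly continuous semigroups and contractivity of $T(\tau)$ give
$$\enVert{T(t)f-f}_p=\enVert{\int_0^t T(\tau)(-\Delta)^s f\,d\tau}_p\leq t\enVert{(-\Delta)^sf}_p\leq Ct\enVert{f}_{2s,p},$$
where the last inequality uses the $L^p$-boundedness of the Mihlin multiplier $|k|^{2s}(1+|k|^2)^{-s}$. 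For $\theta\in(0,s)$, I would apply complex interpolation to the linear operator $f\mapsto T(t)f-f$ between its two endpoint bounds, using the standard identity $[L^p(\bb{T}^d),H_p^{2s}(\bb{T}^d)]_{\theta/s}=H_p^{2\theta}(\bb{T}^d)$ for Bessel potential spaces.

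The technical crux lies in part (1): making precise that $(-\Delta)^s$ generates an analytic semigroup on $L^p(\bb{T}^d)$, and transferring from the homogeneous power $(-\Delta)^{\gamma/2}$ to the inhomogeneous $(I-\Delta)^{\gamma/2}$ with a constant independent of $t$. Both hinge on Mihlin-type multiplier estimates on the torus, with the zero-frequency contribution treated by orthogonal projection onto the mean. Once part (1) is proven, part (2) then follows almost mechanically from the endpoints plus interpolation.
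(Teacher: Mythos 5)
The paper itself offers no proof of this lemma: it is quoted verbatim from Cirant--Goffi \cite{cirant2019existence}, where it is established by working directly with the transition density $p_t$ of the $2s$-stable process (periodization of the $\bb{R}^d$ kernel, the scaling $p_t(x)=t^{-d/2s}p_1(t^{-1/2s}x)$, $L^1$ bounds on its Bessel derivatives, and Young's inequality) --- i.e.\ essentially your ``hands-on alternative.'' Your primary route is therefore genuinely different and is sound: the reduction of (1) to an $L^p$ multiplier bound via commutation with $(I-\Delta)^{\nu/2}$ is exact, analyticity of the semigroup generated by $-(-\Delta)^s$ on $L^p(\bb{T}^d)$ (by subordination or by Mihlin applied to $e^{-t(2\pi|k|)^{2s}}$) plus the bound $\enVert{(-\Delta)^{s\alpha}T(t)}_{L^p\to L^p}\leq Ct^{-\alpha}$ gives the singular factor, and for (2) the two endpoint estimates together with $[L^p,H_p^{2s}]_{\theta/s}=H_p^{2\theta}$ and interpolation of operator norms is a clean argument that the kernel proof essentially replicates by hand. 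Two small points deserve explicit care. First, $(-\Delta)^s$ is not invertible on $L^p(\bb{T}^d)$ (constants lie in its kernel), so the fractional-power/moment-inequality machinery should be applied after splitting $L^p$ into constants plus the mean-zero subspace, or one should verify the multiplier bound for $(2\pi|k|)^{2s\alpha}e^{-t(2\pi|k|)^{2s}}$ directly; your remark about treating the zero frequency separately is exactly the needed fix, so make it a genuine step rather than an aside. Second, because $T(t)$ preserves the mean, the inequality in (1) with a constant independent of $t$ can only hold for $t$ in a bounded interval (or modulo constants); the constant-mode contribution gives $C(1+t^{-\gamma/2s})$, which is of the stated form only after restricting to $t\leq T$, a restriction implicit in the lemma as used in the paper. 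Neither point is a gap in substance, but both should be stated to make the argument airtight.
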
    

    We also will make use of the following embedding Theorem from \cite{cirant2019existence}
    \begin{theorem}\label{thm: embedding}
        Let $\varepsilon>0$, $\mu\in\bb{R}$, $p>1$, $u\in\mathcal{H}^\mu_p(Q_T)$, and $u(0)\in H^{\mu-2s/p+\varepsilon}(\bb{T}^d)$. If $\beta$ is such that 
        $$\frac{s}{p}<\beta<s,$$
        then $u\in \s{C}^{\frac{\beta}{s}-\frac{1}{p}}\del{[0,T];H^{\mu-2\beta}_p(\bb{T}^d)}.$ In particular, there exists $C>0$ depending on $d,p,\beta,T,\varepsilon$ such that
        $$\enVert{u(\cdot,t)-u(\cdot,\tau)}^p_{\mu-2\beta,p}\leq C\abs{t-\tau}^{\frac{\beta}{s}p-1}\del{\enVert{u}_{\mathcal{H}_p^\mu(Q_T)}+\enVert{u(0)}_{\mu-2s/p+\varepsilon,p}}$$
        for $0\leq t,\tau\leq T.$ Hence,
        \begin{equation}
            \enVert{u}_{\s{C}^{\frac{\beta}{s}-\frac{1}{p}}\del{[0,T];H^{\mu-2\beta}_p(\bb{T}^d)}}\leq C\del{\enVert{u}_{\mathcal{H}_p^\mu(Q_T)}+\enVert{u(0)}_{\mu-2s/p+\varepsilon,p}}.
        \end{equation}
    \end{theorem}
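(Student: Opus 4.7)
The plan is to use Duhamel's formula together with the semigroup estimates from Lemma \ref{lem: sgbounds}. Set $f := \partial_t u + (-\Delta)^s u$; since $(-\Delta)^s\colon H_p^\mu \to H_p^{\mu - 2s}$ is bounded, the hypothesis $u \in \mathcal{H}^\mu_p(Q_T)$ yields $f \in \bb{H}^{\mu - 2s}_p(Q_T)$ with $\enVert{f}_{\bb{H}^{\mu - 2s}_p} \leq C \enVert{u}_{\mathcal{H}^\mu_p}$. Write $u(t) = T(t) u(0) + v(t)$ with $v(t) := \int_0^t T(t-r) f(r) \dif r$, and estimate the time-H\"older norms of the homogeneous part $T(\cdot) u(0)$ and the Duhamel part $v$ separately.

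For $v$, fix $0 \leq \tau < t \leq T$ and split
\begin{equation*}
v(t) - v(\tau) = \int_\tau^t T(t-r) f(r) \dif r + (T(t-\tau) - I) v(\tau).
\end{equation*}
By Lemma \ref{lem: sgbounds}(1) with $\nu = \mu - 2s$ and $\gamma = 2s - 2\beta \geq 0$, one has $\enVert{T(t-r) f(r)}_{\mu - 2\beta, p} \leq C(t-r)^{\beta/s - 1} \enVert{f(r)}_{\mu - 2s, p}$, and H\"older's inequality in $r$ with conjugate exponent $p'$ is integrable precisely under the hypothesis $\beta > s/p$, producing
\begin{equation*}
\enVert{\int_\tau^t T(t-r) f(r) \dif r}_{\mu - 2\beta, p}^p \leq C (t-\tau)^{\beta p/s - 1} \enVert{f}_{\bb{H}^{\mu-2s}_p}^p.
\end{equation*}
For the update $(T(t-\tau) - I) v(\tau)$, apply Lemma \ref{lem: sgbounds}(2) with $\theta = \beta - s/p \in (0, s)$ (after commuting with the Bessel multiplier $(I - \Delta)^{(\mu - 2\beta)/2}$) to get
\begin{equation*}
\enVert{(T(t-\tau) - I) v(\tau)}_{\mu - 2\beta, p} \leq C (t-\tau)^{\beta/s - 1/p} \enVert{v(\tau)}_{\mu - 2s/p, p}.
\end{equation*}
The missing ingredient is a uniform-in-$\tau$ bound on $\enVert{v(\tau)}_{\mu - 2s/p, p}$, which I would extract from $L^p$-maximal regularity for the fractional heat equation (see \cite{chang2012stochastic}): this provides a continuous embedding of $\mathcal{H}^\mu_p(Q_T)$ into $C([0, T]; X)$ for an appropriate trace space $X$. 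I expect this to be the main technical obstacle, because the natural trace space $X = (H_p^{\mu - 2s}, H_p^\mu)_{1 - 1/p, p}$ is a Besov space rather than the Bessel potential space $H_p^{\mu - 2s/p}$. The margin $\varepsilon > 0$ in the hypothesis on $u(0)$ is intended precisely to accommodate this Besov-to-Bessel gap, either by a mild downshift of $\theta$ or by standard Sobolev embeddings between these scales.

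For the homogeneous contribution, $T(t) u(0) - T(\tau) u(0) = (T(t-\tau) - I) T(\tau) u(0)$, and Lemma \ref{lem: sgbounds}(2) with the same $\theta = \beta - s/p$ gives $\enVert{T(t) u(0) - T(\tau) u(0)}_{\mu - 2\beta, p} \leq C (t-\tau)^{\beta/s - 1/p} \enVert{T(\tau) u(0)}_{\mu - 2s/p, p}$. The $L^p$-contractivity of $T(\tau)$ together with the embedding $H_p^{\mu - 2s/p + \varepsilon} \hookrightarrow H_p^{\mu - 2s/p}$ then bounds this by $C(t-\tau)^{\beta/s - 1/p} \enVert{u(0)}_{\mu - 2s/p + \varepsilon, p}$. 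Combining with the estimate for $v$ and raising to the $p$-th power produces the stated inequality.
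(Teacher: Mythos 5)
Your overall strategy (Duhamel representation plus the semigroup bounds of Lemma \ref{lem: sgbounds}) is the right one, and two of your three pieces are complete and sharp: the bound on $\int_\tau^t T(t-r)f(r)\dif r$ correctly uses $\gamma=2s-2\beta$ and H\"older in time, with integrability exactly when $\beta>s/p$, and the homogeneous part $(T(t-\tau)-I)T(\tau)u(0)$ is handled correctly (this is in fact where the hypothesis $u(0)\in H_p^{\mu-2s/p+\varepsilon}$ earns its keep). The genuine gap is the term $(T(t-\tau)-I)v(\tau)$: your estimate needs $\sup_{\tau}\enVert{v(\tau)}_{\mu-2s/p,p}<\infty$, and neither of your proposed repairs delivers the theorem as stated. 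A direct Duhamel bound of $\enVert{v(\tau)}_{\mu-2s/p,p}$ is logarithmically divergent (the kernel exponent after H\"older is exactly $-1$), and the abstract maximal-regularity trace embedding only places $v(\tau)$ in the real-interpolation space $\del{H_p^{\mu-2s},H_p^{\mu}}_{1-1/p,p}=B^{\mu-2s/p}_{p,p}$, which for $p>2$ is strictly larger than $H_p^{\mu-2s/p}$; the $\varepsilon$ on $u(0)$ cannot bridge this, since it concerns only the initial datum, not the interior-time trace of the Duhamel part (note $v(0)=0$, yet its trace at $\tau>0$ is still only Besov a priori). Downshifting $\theta$ to $\beta-s/p-\delta$ does make the trace bound usable, but it produces the exponent $\beta/s-1/p-\delta/s$ in $H_p^{\mu-2\beta}$, strictly below the claimed $\beta/s-1/p$, so the statement is not recovered for general $p>1$ (your route does close, with the sharp exponent, when $1<p\le 2$, where $B^{\sigma}_{p,p}\hookrightarrow H^{\sigma}_p$).

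The fix, staying entirely inside your framework, is to avoid the uniform trace bound altogether: write $(T(t-\tau)-I)v(\tau)=\int_0^{\tau}\del{T(t-r)-T(\tau-r)}f(r)\dif r$ and use $T(t-r)-T(\tau-r)=-\int_{\tau-r}^{t-r}(-\Delta)^sT(\sigma)\dif\sigma$. Lemma \ref{lem: sgbounds}(1) with $\gamma=2s-2\beta$ and with $\gamma=4s-2\beta$ gives the two kernel bounds
\begin{equation*}
\enVert{\del{T(t-r)-T(\tau-r)}f(r)}_{\mu-2\beta,p}\leq C\min\cbr{(\tau-r)^{\frac{\beta}{s}-1},\,(t-\tau)(\tau-r)^{\frac{\beta}{s}-2}}\enVert{f(r)}_{\mu-2s,p},
\end{equation*}
and splitting the $r$-integral at $\tau-(t-\tau)$ shows the $L^{p'}(0,\tau)$-norm of this kernel is bounded by $C(t-\tau)^{\frac{\beta}{s}-\frac1p}$ precisely because $s/p<\beta<s$; H\"older then yields the sharp estimate with $\enVert{f}_{\bb{H}^{\mu-2s}_p}$ on the right. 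This difference-of-semigroups argument is essentially how the result is proved in the cited source \cite{cirant2019existence}; the present paper itself states Theorem \ref{thm: embedding} without proof, importing it from there, so your attempt should be judged against that source rather than against an in-paper argument.
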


\subsection{Preliminary lemmas on fractional parabolic equations}
Later we will make use of some basic results regarding the following fractional parabolic equation:
\begin{equation}\label{eq: fractional parabolic}
    \begin{cases}
        \partial_tu+(-\Delta)^su+f(x,t)=0,\quad &\text{in } Q_T,\\
        u(x,T)=u_T(x),\quad &\text{in }\bb{T}^d.
    \end{cases}
\end{equation}
The following two lemmas will be useful in the ``bootstrapping" portion of our derivation of a priori estimates for solutions to the fractional Hamilton Jacobi Equation appearing in \eqref{eq:MFG}.
\begin{lemma}\label{lem: grad holder}
        Suppose $s>\frac{1}{2}$, $\varepsilon > 0$, $f\in L^\infty(Q_T)$, and $u_T \in \s{C}^{2s + \varepsilon}(\bb{T}^d)$. Let $u$ be the solution of \eqref{eq: fractional parabolic}.
        Then there exist constants $C > 0$ and $\beta \in (0,1)$ independent of $u$ and $f$ such that
        \begin{equation} \label{abstract grad estimate}
            \enVert{Du}_{\s{C}^{\beta,\frac{\beta}{2s}}(Q_T)}\leq C\del{\enVert{f}_\infty + \enVert{u_T}_{\s{C}^{2s + \varepsilon}}}
        \end{equation}
\end{lemma}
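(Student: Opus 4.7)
The plan is to combine Duhamel's formula with the parabolic embedding Theorem \ref{thm: embedding}, leveraging the assumption $s>1/2$ to ensure there is enough room for $Du$ to land in a Hölder class.

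First, by the (backward) Duhamel formula,
\begin{equation*}
u(t)=T(T-t)u_T+\int_t^T T(\sigma-t)f(\cdot,\sigma)\dif\sigma.
\end{equation*}
Applying Lemma \ref{lem: sgbounds}(1) with $\nu=0$, $\gamma=\mu$, and using that $T(t)$ is contractive on $H^\mu_p(\bb{T}^d)$, I would obtain
\begin{equation*}
\sup_{t\in[0,T]}\enVert{u(\cdot,t)}_{\mu,p}\leq C\del{\enVert{u_T}_{\s{C}^{2s+\varepsilon}}+\enVert{f}_\infty}
\end{equation*}
for any $p>1$ and any $\mu\in[0,2s)$; the integrability of $(\sigma-t)^{-\mu/(2s)}$ on $[t,T]$ demands precisely $\mu<2s$. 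Using then $\partial_tu=-(-\Delta)^su-f$, together with the embedding $L^p(Q_T)\hookrightarrow\bb{H}^{\mu-2s}_p(Q_T)$ (valid since $\mu-2s<0$), I would deduce $u\in\mathcal{H}^\mu_p(Q_T)$ with the same right-hand side as a bound.

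Next, to apply Theorem \ref{thm: embedding}, which is stated with initial conditions, I would pass to $v(x,t):=u(x,T-t)$, whose $\mathcal{H}^\mu_p$ norm coincides with that of $u$ and whose initial value is $v(0)=u_T$. I would then fix $p$ large enough that $p>(2s+d)/(2s-1)$, choose $\beta_*\in(s/p,\,s-\tfrac12-\tfrac{d}{2p})$ (nonempty exactly because $s>1/2$), and pick $\mu=2s-\delta$ for $\delta>0$ small enough that $\mu-2\beta_*>1+d/p$. Theorem \ref{thm: embedding} would then yield $v\in\s{C}^{\beta_*/s-1/p}\del{[0,T];H^{\mu-2\beta_*}_p(\bb{T}^d)}$ with the desired quantitative bound, after noting that $u_T\in\s{C}^{2s+\varepsilon}\subset H^{\mu-2s/p+\varepsilon_1}_p$ for small enough auxiliary $\varepsilon_1$.

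Finally, with $\beta:=\mu-2\beta_*-d/p-1>0$, the Sobolev embedding $H^{\mu-2\beta_*}_p(\bb{T}^d)\hookrightarrow\s{C}^{1+\beta}(\bb{T}^d)$ would give a uniform $\s{C}^{1+\beta}_x$ bound on $u(t)$ and hence the spatial Hölder estimate on $Du$, together with the time-estimate $\enVert{Du(t)-Du(\tau)}_{\s{C}^\beta_x}\leq C|t-\tau|^{\beta_*/s-1/p}$. Shrinking $\beta$ slightly if necessary ensures $\beta_*/s-1/p\geq \beta/(2s)$, delivering the claimed parabolic-Hölder bound \eqref{abstract grad estimate}. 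The main obstacle will be the parameter bookkeeping: simultaneously enforcing (i) $\mu<2s$ for Duhamel integrability, (ii) $s/p<\beta_*<s$ for Theorem \ref{thm: embedding}, and (iii) $\mu-2\beta_*-d/p>1$ for Sobolev embedding above the $C^1$ threshold. These constraints can be reconciled precisely because $2s-1>0$ provides a strictly positive margin for $p$ large, which is the structural reason the hypothesis $s>1/2$ is indispensable here.
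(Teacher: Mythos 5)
Your argument is correct, and it takes a genuinely different route from the paper. The paper works directly with fractional powers of the generator: writing $u$ via Duhamel with $u_T=0$ (the terminal-data case handled separately and combined by linearity), it proves $\enVert{A^\gamma u(t)}_p\leq Ct^{1-\gamma}\enVert{f}_p$ with $A^\gamma=(-\Delta)^{s\gamma}$, embeds $H^{2s\gamma}_p\hookrightarrow \s{C}^{2s\gamma-d/p}$, and then estimates the time increments of $A^\gamma u$ by hand using $A\del{T(t-\sigma)-T(\tau-\sigma)}=\int_{\tau-\sigma}^{t-\sigma}A^2T(\rho)\dif\rho$, ending with $\beta<\min\cbr{2s(1-\gamma),\,2s\gamma-d/p-1}$. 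You instead package the same semigroup bounds (Lemma \ref{lem: sgbounds}) into a uniform $\mathcal{H}^\mu_p(Q_T)$ estimate for any $\mu<2s$ (using $\partial_tu=-(-\Delta)^su-f$ for the $\bb{H}^{\mu-2s}_p$ bound on $\partial_t u$), then invoke the abstract embedding Theorem \ref{thm: embedding} after time reversal, followed by Sobolev embedding above the $\s{C}^1$ threshold; your parameter constraints are consistent (the interval for $\beta_*$ is nonempty precisely when $p>(2s+d)/(2s-1)$, which is where $s>\frac12$ enters, matching the role of $2s\gamma-d/p-1>0$ in the paper). What each buys: the paper's computation is more self-contained and makes the time exponent explicit without needing the initial-value hypothesis of Theorem \ref{thm: embedding}, while yours reuses existing machinery, treats $f$ and $u_T$ simultaneously without the linearity splitting, and yields the bonus that the time-Hölder continuity holds with values in $\s{C}^{1+\beta}$ rather than just pointwise in $x$. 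Two minor points to tidy: take the Sobolev embedding with strict inequality (i.e.\ $\beta$ strictly below $\mu-2\beta_*-d/p-1$, as you already suggest by shrinking $\beta$), and note that the justification of $\s{C}^{2s+\varepsilon}(\bb{T}^d)\hookrightarrow H^{\mu}_p(\bb{T}^d)$ for $\mu<2s+\varepsilon$ deserves a citation, though the paper itself uses the same fact without comment.
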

\begin{proof}
        Observe that $u_T \in H_p^{2s + \varepsilon}(\bb{T}^d)$ and $f \in L^p(Q_T)$ for all $p > 1$.
        It follows from \cite[Theorem B.3]{cirant2019existence} that there exists a unique solution $u \in \bb{H}_p^{2s}$ for arbitrarily large $p$.
        It is enough to prove estimate \eqref{abstract grad estimate}.
        We will do this for $u_T = 0$ and $f$ arbitrary; a similar argument can be used for $f = 0$ and $u_T$ arbitrary, and the general case follows by linearity.
        
        Set $A=(-\Delta)^s$ and let $T(t)$ be  the semigroup generated by $A$. Now for $\gamma\in (0,1)$, we have $A^\gamma=(-\Delta)^{s\gamma}$, and for all $p > 1$ we have
        $$\enVert{A^\gamma T(t)f}_p\leq \frac{C}{t^\gamma}\enVert{f}_p$$
        as a consequence of Lemma \ref{lem: sgbounds}.
        Using this, 
        \begin{equation} \label{Agu leq fp}
        \begin{split}
            \enVert{A^\gamma u(t)}_p&=\enVert{\int_0^tA^\gamma T(t-s)f(s)\dif s}_p\\
            &\leq\int_0^t\enVert{A^\gamma T(t-s)f(s)}_p\dif s\\
            &\leq C\int_0^t(t-s)^{-\gamma}\enVert{f(s)}_p\dif s\\
            &\leq C t^{1-\gamma}\enVert{f}_p.
        \end{split}
        \end{equation}
        Thus for $p>\frac{d}{2s\gamma}$,  
        \begin{equation*}
            \begin{aligned}
                \enVert{u(t)}_{\s{C}^{2s\gamma-d/p}(\bb{T}^d)}
                &\leq C\enVert{u(t)}_{H_p^{2s\gamma}(\bb{T}^d)} \quad \text{(by \cite[Lemma 2.5]{cirant2019existence})}\\
                &\leq C\del{\enVert{u(t)}_{p} + \enVert{A^\gamma u(t)}_{p}} \quad \text{(see \cite[Remark 2.3]{cirant2019existence})}\\
                &\leq C\enVert{f}_{p} \quad \text{(by \eqref{Agu leq fp})}\\
                &\leq C\enVert{f}_{\infty}
            \end{aligned}
        \end{equation*}
        where $C$ is independent of $u$ and $f$.
        Therefore if we take $p$ large and $\gamma$ close enough to $1$ so that $2s\gamma-d/p-1>0$, we get $\enVert{Du(t)}_{\s{C}^\beta(\bb{T}^d)}\leq C\enVert{f}_\infty$ for $2s\gamma-d/p-1>\beta>0.$
        Now for the time regularity of the gradient of $u$, let $t,\tau\in[0,T]$ and without loss of generality suppose $t>\tau$. Then
        \begin{equation}
            \begin{split}
                \enVert{A^\gamma u(t)-A^\gamma u(\tau)}_p&=\enVert{\int_0^tA^\gamma T(t-s)f(s)\dif s-\int_0^\tau A^\gamma T(\tau-s)f(s)\dif s}_p\\
                &\leq \int_0^\tau \enVert{A^\gamma \del{T(t-s)-T(\tau-s)}f(s)}_p\dif s+\int_\tau^t\enVert{A^\gamma T(t-s)f(s)}_p\dif s\\
                &\leq \int_0^\tau \int_{\tau-s}^{t-s}\enVert{A^{1+\gamma} e^{A\rho}f(s)}_p\dif\rho\dif s+\int_\tau^t\enVert{A^\gamma T(t-s)f(s)}_p\dif s\\
                &\leq C\enVert{f}_p\del{\int_0^\tau \int_{\tau-s}^{t-s}\rho^{-(1+\gamma)}\dif\rho\dif s+\int_\tau^t (t-s)^{-\gamma}\dif s}\\
                &\leq C\enVert{f}_p(t-\tau)^{1-\gamma}.
            \end{split}
        \end{equation}
        Now $\gamma<1$ so we can take $\beta<\min\cbr{2s(1-\gamma),2s\gamma-d/p-1}$ so that $\enVert{Du}_{\s{C}^{\beta,\frac{\beta}{2s}}(Q_T)}\leq C\enVert{f}_\infty$.
\end{proof}
The following Lemma is similar to Theorem B.1 in \cite{cirant2019existence}. However, the time regularity depends on $f$ being $\frac{\beta}{2s}$-H\"older continuous in time, while our results will only require $\frac{\beta}{2}$ time regularity. Here we prove the details for such a case, which will be useful in Section \ref{sub: HJ est}.
\begin{lemma}\label{lem: full holder}
    Let $\beta\in(0,1)$ and $s>\frac{1}{2}$ be such that $2s+\beta$ is not an integer. Suppose $u$ is a solution to \eqref{eq: fractional parabolic}. Then
    $$\enVert{\partial_tu}_{\s{C}^{\beta,\frac{\beta}{2}}(Q_T)}+\enVert{(-\Delta)^su}_{\s{C}^{\beta,\frac{\beta}{2}}(Q_T)}\leq C\del{\enVert{u_T}_{\s{C}^{2s+\beta}(\bb{T}^d)}+\enVert{f}_{\s{C}^{\beta,\frac{\beta}{2}}(Q_T)}}$$
\end{lemma}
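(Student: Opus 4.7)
The plan is to adapt the argument of \cite[Theorem B.1]{cirant2019existence} to the weaker time-regularity assumption on $f$. Write $A=(-\Delta)^s$ and let $T(\cdot)$ denote the semigroup it generates; viewing the equation as a forward parabolic problem (by time reversal if needed), Duhamel's formula gives
\[
\partial_t u(t) = -AT(t)u_T + f(t) - \int_0^t AT(t-\sigma)f(\sigma)\dif\sigma.
\]
Since $Au = -\partial_tu - f$ from the PDE, it will be enough to bound $\partial_tu$ in $\s{C}^{\beta,\beta/2}$; the estimate for $Au$ then follows by the triangle inequality and the assumption on $f$. For the spatial $\s{C}_x^\beta$ estimate at fixed $t$, I use the identity $\int_0^tAT(t-\sigma)\dif\sigma = I - T(t)$ to rewrite the Duhamel integral as $(I-T(t))f(t) + \int_0^t AT(t-\sigma)[f(\sigma) - f(t)]\dif\sigma$, and then apply the Schauder-type argument of \cite[Theorem B.1]{cirant2019existence}, which needs only $f\in\s{C}_x^\beta$ and $u_T\in\s{C}^{2s+\beta}$.

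The novelty is the time Hölder estimate. For $0\leq\tau<t\leq T$ I decompose
\[
\partial_tu(t) - \partial_tu(\tau) = -[AT(t) - AT(\tau)]u_T + [f(t) - f(\tau)] + \mathrm{I} + \mathrm{II},
\]
with $\mathrm{I} = -\int_\tau^t AT(t-\sigma)f(\sigma)\dif\sigma$ and $\mathrm{II} = -\int_0^\tau [AT(t-\sigma) - AT(\tau-\sigma)]f(\sigma)\dif\sigma$. The first term factors as $T(\tau)(T(t-\tau)-I)Au_T$; since $Au_T\in\s{C}^\beta$, the bound $\enVert{(T(\eta)-I)h}_\infty \leq C\eta^{\beta/(2s)}[h]_{\s{C}^\beta}$ yields $C(t-\tau)^{\beta/(2s)}\enVert{u_T}_{\s{C}^{2s+\beta}}\leq C(t-\tau)^{\beta/2}\enVert{u_T}_{\s{C}^{2s+\beta}}$ since $s\leq 1$. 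The second difference $f(t)-f(\tau)$ is controlled directly by $[f]_{\s{C}_t^{\beta/2}}$. For $\mathrm{I}$, the same constant-trick gives $\int_\tau^t AT(t-\sigma)f(\sigma)\dif\sigma = (I-T(t-\tau))f(t) + \int_\tau^tAT(t-\sigma)[f(\sigma) - f(t)]\dif\sigma$; each piece is then estimated using $\enVert{f(\sigma) - f(t)}_\infty \leq [f]_{\s{C}_t^{\beta/2}}|\sigma - t|^{\beta/2}$ together with $\enVert{AT(\eta)h}_\infty \leq C\eta^{-1}\enVert{h}_\infty$, yielding order $(t-\tau)^{\beta/2}$.

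The hard term is $\mathrm{II}$, since neither $f(\sigma)-f(t)$ nor $f(\sigma)-f(\tau)$ appears naturally inside the integrand and the time regularity of $f$ cannot be exploited directly. The key tool will be the identity
\[
AT(t-\sigma) - AT(\tau-\sigma) = -\int_\tau^t A^2T(\rho-\sigma)\dif\rho,
\]
combined with a double-smoothing bound $\enVert{A^2T(\eta)g}_\infty \leq C\eta^{-2+\beta/(2s)}[g]_{\s{C}^\beta}$, which I derive by writing $A^2T(\eta) = AT(\eta/2)\cdot AT(\eta/2)$ and chaining the semigroup estimates $T(\eta/2):\s{C}^\beta\to\s{C}^{2s+\beta}$ (norm $\eta^{-1}$) and $AT(\eta/2):\s{C}^\beta\to L^\infty$ (norm $\eta^{-1+\beta/(2s)}$). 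Interchanging the order of integration, the $\sigma$-integral of $(\rho-\sigma)^{-2+\beta/(2s)}$ over $(0,\tau)$ is bounded by $C(\rho-\tau)^{-1+\beta/(2s)}$ (this is where $s>1/2$ enters, ensuring $\beta/(2s)<1$ and hence integrability), and then integrating in $\rho\in(\tau,t)$ produces $C(t-\tau)^{\beta/(2s)}[f]_{\s{C}_x^\beta} \leq C(t-\tau)^{\beta/2}[f]_{\s{C}_x^\beta}$, again since $s\leq 1$. The hypothesis that $2s+\beta\notin\bb{Z}$ will be used to avoid logarithmic corrections in the intermediate Hölder estimates on the semigroup.
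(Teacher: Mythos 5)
Your argument is correct, and it reaches the same estimate through a genuinely different decomposition than the paper. The paper uses a Sinestrari-type splitting $u=u_1+u_2$ with $u_1(\cdot,t)=\int_0^t T(t-\sigma)\del{f(\cdot,\sigma)-f(\cdot,t)}\dif\sigma$, estimates $Au_1,Au_2$ separately, and recovers $\partial_t u$ from the equation; the history term is handled by inserting the increment $f(\cdot,\sigma)-f(\cdot,r)$ and using only the crude bound $\enVert{A^2T(t)}_{L(\s{C}^0)}\lesssim t^{-2}$, so the $\frac{\beta}{2}$ time-H\"older hypothesis on $f$ is what makes that double integral converge. You instead difference the differentiated Duhamel formula for $\partial_t u$ directly and treat the troublesome history term $\mathrm{II}$ with the double-smoothing bound $\enVert{A^2T(\eta)}_{L(\s{C}^\beta,\s{C}^0)}\lesssim \eta^{-2+\beta/(2s)}$, i.e.\ you trade the time regularity of $f$ for its spatial regularity there; the time-H\"older of $f$ enters only through the local terms on $[\tau,t]$. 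What the paper's route buys is that it only needs the semigroup bounds already listed in its proof ($M_0,M_1,M_2,M_{1,\beta}$); your route additionally needs $T(\eta):\s{C}^\beta\to\s{C}^{2s+\beta}$ with norm $\eta^{-1}$ and boundedness of $(-\Delta)^s:\s{C}^{2s+\beta}\to\s{C}^\beta$ (this is where $2s+\beta\notin\bb{Z}$ genuinely enters for you), but in exchange it isolates more cleanly which hypothesis controls which term. Both proofs produce some contributions of order $(t-\tau)^{\beta/(2s)}$ and rely on $s\le 1$ and the boundedness of $[0,T]$ to convert this to $(t-\tau)^{\beta/2}$, as you note. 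One small imprecision: for the piece $\del{I-T(t-\tau)}f(t)$ in your term $\mathrm{I}$, the tools you cite there ($\enVert{f(\sigma)-f(t)}_\infty$ and $\enVert{AT(\eta)}_{L(\s{C}^0)}\lesssim\eta^{-1}$) do not suffice; you need the bound $\enVert{(T(\eta)-I)h}_\infty\lesssim\eta^{\beta/(2s)}[h]_{\s{C}^\beta}$ (equivalently $I-T(\eta)=\int_0^\eta AT(\rho)\dif\rho$ with $\enVert{AT(\rho)}_{L(\s{C}^\beta,\s{C}^0)}\lesssim\rho^{-1+\beta/(2s)}$), which you already invoke for the $u_T$ term, so this is a matter of wording rather than a gap.
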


\begin{proof}
    We may directly apply Theorem B.1 from \cite{cirant2019existence} to show the spatial regularity:
    $$\sup_{t\in[0,T]}\del{\enVert{\partial_tu(\cdot,t)}_{\s{C}^{\beta}(\bb{T}^d)}+\enVert{(-\Delta)^su(\cdot,t)}_{\s{C}^{\beta}(\bb{T}^d)}}\leq C\del{\enVert{u_T}_{\s{C}^{2s+\beta}(\bb{T}^d)}+\sup_{t\in[0,T]}\enVert{f(\cdot,t)}_{\s{C}^{\beta}(\bb{T}^d)}}.$$ 
    Now to show the remaining time regularity we must show that $u_t$ and $(-\Delta)^s$ are $\beta/2$-H\"older continuous with respect to time. Let $A=(-\Delta)^s$ and $T$ be the semigroup generated by $A$. We will use the following estimates from Lemma \ref{lem: sgbounds}
     \begin{align}\label{eq: T estimates}
       \enVert{T(t)}_{L(\s{C}^0)} &\leq M_0\\
       \enVert{AT(t)}_{L(\s{C}^0)} &\leq \frac{M_1}{t}\\
       \enVert{A^2T(t)}_{L(\s{C}^0)} &\leq \frac{M_2}{t^2}\\
       \enVert{AT(t)}_{L(\s{C}^\beta, \s{C}^0)}&\leq \frac{M_{1,\beta}}{t^{1-\frac{\beta}{2s}}}
   \end{align}
   Let $u=u_1+u_2$ where
   \begin{equation}
       \begin{cases}
           u_1(\cdot,t)=\int_0^tT(t-\sigma)\del{f(\cdot,\sigma)-f(\cdot,t)}\dif \sigma,\quad 0\leq t\leq T,\\
           u_2(\cdot,t)=T(t)u_T+\int_0^tT(t-\sigma)f(\cdot,t)\dif \sigma,\quad 0\leq t\leq T.
       \end{cases}
   \end{equation}
   Let $0\leq r\leq t\leq T$, then 
   \begin{equation}
       \begin{split}
           Au_1(\cdot,t)-Au_1(\cdot,r)=\int_0^rA(T(t-\sigma)-T(r-\sigma))(f(\cdot,\sigma)-f(\cdot,r))\dif\sigma\\
           +(T(t)-T(t-r))(f(\cdot,r)-f(\cdot,t))+\int_r^tAT(t-\sigma)(f(\cdot,\sigma)-f(\cdot,t))\dif\sigma.
       \end{split}
   \end{equation}
   Note
   \begin{equation}
      A (T(t-\sigma)-T(r-\sigma))=\int_{r-\sigma}^{t-\sigma}A^2T(\tau)\dif\tau.
   \end{equation}
   Using this, as well as the estimates above yields
   \begin{equation}
       \begin{split}
           \enVert{Au_1(\cdot,t)-Au_1(\cdot,r)}&\leq M_2\int_0^r(r-\sigma)^{\beta/2}\int_{r-\sigma}^{t-\sigma}\tau^{-2}\dif\tau\dif\sigma\sup_{x\in\bb{T}^d}[f(x,\cdot)]_{\s{C}^{\beta/2}}\\
           &+2M_0(t-r)^{\beta/2}\sup_{x\in\bb{T}^d}[f(x,\cdot)]_{\s{C}^{\beta/2}}+M_1\int_r^t(t-\sigma)^{\beta/2-1}\dif\sigma\sup_{x\in\bb{T}^d}[f(x,\cdot)]_{\s{C}^{\beta/2}}\\
           &\leq C\sup_{x\in\bb{T}^d}[f(x,\cdot)]_{\s{C}^{\beta/2}}(t-r)^{\beta/2}
       \end{split}
   \end{equation}
   So $\sbr{Au_1}_{\s{C}_t^\frac{\beta}{2}([0,T])}\leq C\sbr{f}_{\s{C}_t^\frac{\beta}{2}([0,T])}$. Then,
   \begin{equation}
       \begin{split}
           \enVert{Au_2(\cdot,t)-Au_2(\cdot,r)}_{L(\s{C}^0)}  &\leq \enVert{(T(t)-T(r))(Au_T(\cdot)+f(\cdot,0))}_{L(\s{C}^0)} \\
           &\hspace{-.9in}+\enVert{(T(t)-T(r))(f(\cdot,r)-f(\cdot,0))}_{L(\s{C}^0)}+\enVert{(T(t)-1)(f(\cdot,t)-f(\cdot,r))}_{L(\s{C}^0)} \\
           &\hspace{-1in}\leq \int_r^t\enVert{AT(\sigma)}_{L(\s{C}^\beta,\s{C}^0)}\dif\sigma\enVert{Au_0(\cdot)+f(\cdot,0)}_{\s{C}^\beta(\bb{T}^d)}\\
           &\hspace{-.9in}+r^{\frac{\beta}{2}}\enVert{A\int_r^tT(\sigma)\dif\sigma}\sup_{x\in\bb{T}^d}[f(x,\cdot)]_{\s{C}^\frac{\beta}{2}([0,T])}+(M_0+1)(t-r)^{\frac{\beta}{2}}\sup_{x\in\bb{T}^d}[f(x,\cdot)]_{\s{C}^\frac{\beta}{2}([0,T])}\\
           &\hspace{-1in}\leq \frac{2M_{1,\beta}}{\beta}\enVert{Au_T(\cdot)+f(\cdot,0)}_{\s{C}^\beta(\bb{T}^d)}(t-r)^{\frac{\beta}{2s}}\\
           &\hspace{-.9in}+\del{\frac{2M_1}{\beta}+M_0+1}(t-r)^{\frac{\beta}{2}}\sup_{x\in\bb{T}^d}[f(\cdot,x)]_{\s{C}^\frac{\beta}{2}([0,T])}.
       \end{split}
   \end{equation}
   Thus $\sbr{Au_2}_{C_t^\frac{\beta}{2}([0,T])}\leq C\del{\enVert{Au_T}_{\s{C}^{\beta}(\bb{T}^d)}+\enVert{f}_{\s{C}^{\beta,\frac{\beta}{2}}(Q_T)}}$. Therefore $$\enVert{Au}_{\s{C}^{\beta,\frac{\beta}{2}}(Q_T)}\leq C\del{\enVert{u_T}_{\s{C}^{2s+\beta}(\bb{T}^d)}+\enVert{f}_{\s{C}^{\beta,\frac{\beta}{2}}(Q_T)}}.$$ Additionally, since $\enVert{\partial_tu}_{\s{C}^{\beta,\frac{\beta}{2}}(Q_T)}\leq \enVert{Au+f}_{\s{C}^{\beta,\frac{\beta}{2}}(Q_T)}$, the result holds.
\end{proof}

\subsection{Assumptions}

The constants in the assumptions are $C_0$ a positive constant, $q\in(1,\infty)$ an exponent, $\tilde{q}=\frac{q}{q-1}$ its conjugate exponent, and $\beta_0\in(0,1)$ a H\"older exponent.

   \begin{enumerate}[label=(L\arabic*)]
    	\item
    	  \label{hypo:Lstrconv}
    	  $D^2_{\alpha\alpha}L(x,\alpha,\mu)>0$
    	\item
    	\label{hypo:Lreg}
    	$L:\bb{R}^d\times\bb{R}^d\times\s{P}\left(\bb{R}^d\times\bb{R}^d\right)
    	\to\bb{R}$ 
    	is differentiable with respect to $\left( x,\alpha\right)$ and twice differentiable with respect to $x$;
    	$L$ and its derivatives 
    	are continuous on
    	$\bb{R}^d\times\bb{R}^d\times\s{P}_{\infty,R}\left(\bb{R}^d\times\bb{R}^d\right)$
    	for any $R>0$;
    	we recall that $\s{P}_{\infty,R}\left(\bb{R}^d\times\bb{R}^d\right)$
    	is endowed with the weak* topology
    	on measures;
    	we use the notation $D_xL$, $D_\alpha L$
    	and $D_{(x,\alpha)}L$ for
    	respectively the first-order derivatives
    	of $L$ with respect to $x$, $\alpha$
    	and $(x,\alpha)$, and $D^2_{xx}L$ for the second-order derivative of $L$ with respect to $x$.
    	
    	\item
    	\label{hypo:LMono}
    	$L$ satisfies the Lasry-Lions monotonicity condition \cite{cardaliaguet2017mfgcontrols,kobeissi2022mean}:
    	\begin{equation*}
    		\int_{\bb{R}^d\times\bb{R}^d}
    		\left( L\left(  x,\alpha,\mu^1 \right)
    		-L\left(  x,\alpha,\mu^2\right) \right)
    		d\left( \mu^1-\mu^2 \right) (x,\alpha)
    		\geq
    		0.
    	\end{equation*}
    	for any
    	$\mu^1, \mu^2 \in
    	\s{P}\left(\bb{R}^d\times \bb{R}^d \right)$.
    	\item
    	\label{hypo:Lcoer}
    	$L(x,\alpha,\mu)
    	\geq
    	C_0^{-1}|\alpha|^{\tilde{q}}
    	-C_0\left( 1+\Lambda_{\tilde{q}}\left(\mu\right)^{\tilde{q}}\right)$,
    	where $\Lambda_{\tilde{q}}$ is defined in~\eqref{eq:defL},
    	\item
    	\label{hypo:Lbound}
    	$\abs{ L(x,\alpha,\mu)}
    	+\abs{D_xL(x,\alpha,\mu)}
    	\leq
    	C_0\left(1+|\alpha|^{\tilde{q}}
    	+\Lambda_{\tilde{q}}\left(\mu\right)^{\tilde{q}}\right)$.
    \end{enumerate}
    From the Lagrangian $L$, we define the Hamiltonian $H$ by

\begin{equation}\label{hamiltonian}
	H(x,p,\mu)=\sup_{\alpha\in \bb{R}^d}\cbr{ -p \cdot \alpha- L(x,\alpha,\mu)}
\end{equation}
    Additionally assume 
    \begin{enumerate}[label=(H\arabic*)]
        \item \label{DpmuH}
        for each $R > 0$, there exists finite $C_R>0$ and $C'_R\in(0,1)$ such that $\abs{H(x,p,\mu)-H(x,p,\nu)}\leq C_RW_r(\mu,\nu)$ and $\abs{D_{p}H(x,p,\mu)-D_pH(x,p,\nu)}\leq C'_R W_r(\mu,\nu)$ for all $x \in \bb{T}^d$, $p \in B_R$, and $\mu,\nu \in \s{P}_\infty\del{\bb{T}^d \times \bb{R}^d}$ for some $r\geq 1$, 
        \item \label{hypo: Hxbound} $\abs{D_x H(x,p,\mu)}\leq C_0\del{1+\vert p\vert+\Lambda_{\tilde q}(\mu)^{\tilde q}}$,
        \item \label{DxxH}
        for each $R > 0$, there exists a finite $C_R>0$ such that $\abs{D_{xx}^2H(x,p,\mu)}\leq C_R$ and\\ $\abs{D^2_{px}H(x,p,\mu)}\leq C_R$ for all $x \in \bb{T}^d$, $p \in B_R$, and $\mu \in B_R$.    
    \end{enumerate}
    for every $x\in\bb{T}^d$.

    Let us remark that Hypothesis \ref{hypo: Hxbound} is coherent with Equation (18) in \cite{droniou2006fractal}, which implies that if $D_x H(x,p,\mu)$ has a power law growth rate with respect to $p$, then the power cannot be greater than 1.
    Cf.~Assumption (A4) in \cite{ersland2021fractionalmfg}.

    Finally assume $m_0 \in H^{s-1+\varepsilon_0}_2(\bb{T}^d)$ and $u_T \in \s{C}^{2 + \beta_0}$, with $C_0$ a constant such that
   \begin{enumerate}[label=(I)]
    	\item
    	\label{hypo:Monogregx}
    	$\int_{\bb{R}^d}|x|^2dm_0(x)+
    	\norm{m_0}_{s-1+\varepsilon_0,2}
    	+\norm{u_T(\cdot)}_{\s{C}^{2+\beta_0}}
        \leq C_0$.   
   \end{enumerate}

\begin{lemma}
	\label{lem:Hbound}
	Under assumptions
	\ref{hypo:Lstrconv},
    \ref{hypo:Lreg},
	\ref{hypo:Lcoer} and
	\ref{hypo:Lbound},
	the map $H$, defined in \eqref{hamiltonian},
	is differentiable with respect to $x$ and $p$,
	$H$ and its derivatives 
	are continuous on
	$\bb{T}^d\times\bb{R}^d\times{\mathcal P}_{\infty,R}\left(\bb{T}^d\times\bb{R}^d\right)$
	for any $R>0$.
	Moreover there exists ${\widetilde C}_0>0$ a constant which only depends
	on $C_0$ and $q$ such that
	\begin{align}
		\label{eq:Hpbound}
		\abs{ D_pH\left( x,p,\mu\right)}
		&\leq
		{\widetilde C}_0\left( 1+|p|^{q-1}
		+\Lambda_{\tilde{q}}\left(\mu\right)\right),
		\\
		\label{eq:Hbound}
		\abs{ H\left(x,p,\mu\right)}
		&\leq
		{\widetilde C}_0\left(1+|p|^{q}
		+\Lambda_{\tilde{q}}\left(\mu\right)^{\tilde{q}}\right),
		\\
		\label{eq:Hcoer}
		p\cdot D_pH\left(x,p,\mu\right)
		-H\left(x,p,\mu\right)
		&\geq
		{\widetilde C}_0^{-1}|p|^q
		-{\widetilde C}_0\left(1
		+\Lambda_{\tilde{q}}\left(\mu\right)^{\tilde{q}}\right)
	\end{align}
	for any $x\in\bb{T}^d$,
	$p\in\bb{R}^d$ and $\mu\in{\mathcal P}\left(\bb{T}^d\times\bb{R}^d\right)$.
\end{lemma}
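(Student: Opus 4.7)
The plan is to exploit the strict convexity (\ref{hypo:Lstrconv}) and superlinear coercivity (\ref{hypo:Lcoer}) of $L$ in $\alpha$ to show that the supremum defining $H$ in \eqref{hamiltonian} is attained at a unique maximizer $\alpha^{*}(x,p,\mu)$, and then extract every claimed property from this maximizer together with the envelope identities $D_pH=-\alpha^{*}$ and $D_xH=-D_xL(x,\alpha^{*},\mu)$. Existence of $\alpha^{*}$ is immediate from coercivity, uniqueness from strict convexity, and continuity of $(x,p,\mu)\mapsto\alpha^{*}$ follows from \ref{hypo:Lreg} together with the implicit relation $p=-D_\alpha L(x,\alpha^{*},\mu)$; the positive-definiteness of $D^2_{\alpha\alpha}L$ lets one apply the implicit function theorem to obtain $C^1$ dependence of $\alpha^{*}$ on $(x,p)$, and then the envelope theorem yields the differentiability and continuity claims for $H$.

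The bound \eqref{eq:Hpbound} will come from an a priori estimate on $\abs{\alpha^{*}}$. First I would combine the trivial lower bound $H(x,p,\mu)\geq -L(x,0,\mu)\geq -C_0(1+\Lambda_{\tilde q}(\mu)^{\tilde q})$ (from \ref{hypo:Lbound} at $\alpha=0$) with the Fenchel identity $-p\cdot\alpha^{*}-L(x,\alpha^{*},\mu)=H$ and coercivity \ref{hypo:Lcoer}, yielding
\begin{equation*}
C_0^{-1}\abs{\alpha^{*}}^{\tilde q}\leq\abs{p}\abs{\alpha^{*}}+2C_0\bigl(1+\Lambda_{\tilde q}(\mu)^{\tilde q}\bigr).
\end{equation*}
A Young inequality (with conjugate exponents $\tilde q$ and $q$) absorbs $\abs{p}\abs{\alpha^{*}}$ into $\tfrac{1}{2C_0}\abs{\alpha^{*}}^{\tilde q}+C\abs{p}^q$, giving $\abs{\alpha^{*}}^{\tilde q}\leq C\bigl(1+\abs{p}^q+\Lambda_{\tilde q}(\mu)^{\tilde q}\bigr)$, i.e.\ the bound on $\abs{D_pH}=\abs{\alpha^{*}}$. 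For \eqref{eq:Hbound}, I would then insert this into $\abs{H}\leq\abs{p}\abs{\alpha^{*}}+\abs{L(x,\alpha^{*},\mu)}$ and use \ref{hypo:Lbound} together with another Young's inequality to eliminate the mixed term $\abs{p}\abs{\alpha^{*}}$.

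The coercivity estimate \eqref{eq:Hcoer} is the one requiring real care, and I expect it to be the main obstacle. The key observation is the Fenchel identity
\begin{equation*}
p\cdot D_pH(x,p,\mu)-H(x,p,\mu)=L(x,\alpha^{*},\mu)\geq C_0^{-1}\abs{\alpha^{*}}^{\tilde q}-C_0\bigl(1+\Lambda_{\tilde q}(\mu)^{\tilde q}\bigr),
\end{equation*}
so the task reduces to a lower bound of the form $\abs{\alpha^{*}}^{\tilde q}\gtrsim\abs{p}^q-C(1+\Lambda_{\tilde q}(\mu)^{\tilde q})$. To get this I plan to first produce a matching lower bound on $H$ by testing the supremum defining $H$ at the specific choice $\alpha=-c\abs{p}^{q-2}p$, which after using \ref{hypo:Lbound} gives $H\geq c'\abs{p}^q-C(1+\Lambda_{\tilde q}(\mu)^{\tilde q})$ for $c$ small. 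Inserting this lower bound into the Fenchel identity $H=-p\cdot\alpha^{*}-L(x,\alpha^{*},\mu)$ together with the upper bound on $L$ from \ref{hypo:Lbound}, and absorbing $\abs{p}\abs{\alpha^{*}}$ by Young, yields the desired $\abs{\alpha^{*}}^{\tilde q}\gtrsim\abs{p}^q-C(1+\Lambda_{\tilde q}(\mu)^{\tilde q})$, which closes the argument. All ``generic'' constants that arise depend only on $C_0$ and $q$ as required, so the resulting $\widetilde C_0$ has the stated dependence.
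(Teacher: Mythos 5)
Your proposal is correct. The paper itself states Lemma \ref{lem:Hbound} without proof (it is the standard Legendre-duality lemma for mean field games of controls, imported from \cite{kobeissi2022mean}), and your plan reconstructs exactly the standard argument that reference relies on: unique maximizer $\alpha^{*}$ from coercivity and strict convexity, envelope identities $D_pH=-\alpha^{*}$, $D_xH=-D_xL(x,\alpha^{*},\mu)$, the bound $|\alpha^{*}|^{\tilde q}\leq C(1+|p|^{q}+\Lambda_{\tilde q}(\mu)^{\tilde q})$ via the Fenchel identity and Young's inequality, and the lower bound $H\geq c'|p|^{q}-C(1+\Lambda_{\tilde q}(\mu)^{\tilde q})$ obtained by testing with $\alpha=-c|p|^{q-2}p$ (using $(q-1)\tilde q=q$), which, fed back through the identity $p\cdot D_pH-H=L(x,\alpha^{*},\mu)$ and \ref{hypo:Lcoer}, yields \eqref{eq:Hcoer}. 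All constants indeed depend only on $C_0$ and $q$, so the argument is complete as outlined.
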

Up to replacing $C_0$ with $\max(C_0,{\widetilde C}_0)$,
we can assume that the inequalities in Lemma
\ref{lem:Hbound} are satisfied with $C_0$
instead of ${\widetilde C}_0$.

\subsection{Example}
We give an example of a Hamiltonian satisfying the assumptions from the previous section, which is also suggested by \cite{gomes2016extended}.
Consider 
$$L(x,\alpha,\mu)=\frac{|\alpha+\beta\int \gamma\dif\mu(y,\gamma)|^2}{2}+V(x,\mu)$$ 
and associated Hamiltonian,
$$H(x,p,\mu)=\frac{|p|}{2}^2+\beta p\int\alpha\dif\mu(y,\alpha)-V(x,\mu),$$
where $V$ is bounded, $V(x,\mu)$ is twice differentiable with respect to $x$, $|D_xV|$ and $|D^2_{xx}V|$ are uniformly bounded, and $V$ is Lipschitz and monotone with respect to $\mu$. Let $\beta\in(0,1)$. Claim: \ref{hypo:Lstrconv}-\ref{hypo:Lbound} and \ref{DpmuH}-\ref{DxxH} are satisfied. Indeed,
\begin{itemize}
    \item \ref{hypo:Lstrconv} $$D_{\alpha\alpha}^2L(x,\alpha,\mu)=1>0$$
    \item \ref{hypo:Lreg} $$D_\alpha L(x,\alpha,\mu)=\alpha+\beta\int \gamma\dif\mu(y,\gamma)$$
    $$D_xL(x,\alpha,\mu)=D_xV(x,\mu)$$
    $$D^2_{xx}L(x,\alpha,\mu)=D^2_{xx}V(x,\mu)$$
    \item \ref{hypo:LMono}
    \begin{equation*}
    \begin{split}
    \int_{\bb{R}^d\times\bb{R}^d}&\del{L(x,\alpha,\mu_1)-L(x,\alpha,\mu_2)}\dif(\mu_1-\mu_2)(x,\alpha)\\
    =\beta&\del{\int_{\bb{R}^d\times\bb{R}^d}\alpha\dif(\mu_1-\mu_2)(x,\alpha)}^2+\int_{\bb{R}^d\times\bb{R}^d}\del{V(x,\mu_1)-V(x,\mu_2)}\dif (\mu_1-\mu_2)(x,\alpha)\geq 0   
    \end{split}
    \end{equation*}
    \item \ref{hypo:Lcoer} $$L(x,\alpha,\mu)\geq \frac{(1-\beta^2)|\alpha|^2}{2}-\frac{(1-\beta^2)|\int\gamma\dif\mu(y,\gamma)|^2}{2}+V(x,\mu)\geq C_0^{-1}|\alpha|^2-C_0\del{1+\Lambda_2(\mu)^2}$$
    where $C_0$ depends on the bound on $V$ and $\beta$.
    \item \ref{hypo:Lbound} $$|L(x,\alpha,\mu)|+|D_xL(x,\alpha,\mu)|\leq |\alpha|^2+\abs{\beta\int\gamma\dif\mu(y,\gamma)}^2+V(x,\mu)+D_xV(x,\mu)\leq C_0\del{1+|\alpha|^2+\Lambda_2(\mu)^2}.$$
    \item \ref{DpmuH} 
    $$|H(x,p,\mu)-H(x,p,\nu)|=\abs{\beta p\int \gamma\dif(\mu-\nu)(y,\gamma)+V(x,\mu)-V(x,\nu)}\leq C_RW_q(\mu,\nu).$$ 
    Also,
    $$|D_pH(x,p,\mu)-D_pH(x,p,\nu)|=\abs{\beta\int\gamma\dif(\mu-\nu)(y,\gamma)}\leq \beta W_q(\mu,\nu),$$
    with $C_R<\infty$ for $p\in B_R$.
    \item \ref{hypo: Hxbound} 
    $$|D_xH(x,p,\mu)|=|D_xV(x,\mu)|\leq C,$$
    by our assumptions on $V$.
    \item \ref{DxxH}
    $$|D^2_{xx}H(x,p,\mu)|=|D^2_{xx}V(x,\mu)|\leq C$$
    and 
    $$|D^2_{px}H(x,p,\mu)|=0.$$
\end{itemize}

Now for $H(x,p,\mu)=\frac{\abs{\beta\int\alpha\dif\mu(x,\alpha)+p}^2}{2}+V(x,\mu)$ the assumptions also hold.

\begin{comment}
Consider $H(x,p,\mu)=c(x,\mu)\left(1+|p|^2\right)^{\frac{1}{2}}$, where $c(x,\mu)\geq c_0 >0$. Then, 

\[D_p H(x,p,\mu)=c(x,\mu)\dfrac{p}{\left(1+|p|^2\right)^{\frac{1}{2}}},\]

\[D_{pp}^2 H(x,p,\mu)=\dfrac{\left(1+|p|^2\right)^{\frac{1}{2}}I - \dfrac{pp^T}{ \left(1+|p|^2\right)^{\frac{1}{2}} }  }{\left(1+|p|^2\right)}=\dfrac{\left(1+|p|^2\right)I - pp^T      }{\left(1+|p|^2\right)^{\frac{3}{2}}}  \] 

Notice that the denominator above is always positive.

Let $u\in \mathbb{R}^d$. Then,

\begin{align*}
	u^T\left(\left(1+|p|^2\right)I - pp^T\right)u&=\left(1+|p|^2\right)|u|^2 -(p\cdot u)^2\\
	&\geq |u|^2 + |p|^2|u|^2-(p\cdot u)^2\\
	&\geq |u|^2 >0
\end{align*}

Hence, $H$ is uniformly convex.
\end{comment}

\subsection{Definition of solutions}

\begin{definition}
        We say that $(u,m,\mu)$ is a solution to \eqref{eq:MFG} if
        \begin{itemize}
            \item $u\in \s{C}^{2s,1}(Q_T;\bb{R})$ is a classical solution to \eqref{eq:MFG}(i) and satisfies the terminal condition,
            \item $m\in \s{C}^0\del{[0,T];\s P(\bb{T}^d)}$ is a solution to \eqref{eq:MFG}(ii) in the sense of distributions, and satisfies the initial condition,
            \item $\mu(t)\in\mathcal{P}\del{\bb{T}^d\times\bb{R}^d}$ satisfies \eqref{eq:MFG}(iii) for every $t\in[0,T]$.
        \end{itemize}
    \end{definition}

\subsection{Indexing by $\theta$}
In order to use the Leray-Schauder Theorem, we will consider the following family of Lagrangians indexed by $\theta\in\intoc{0,1}$,
\begin{equation}
	L^{\theta}\left(x,\alpha,\mu\right) 
	=
	\theta
	L\left( x,\theta^{-1}\alpha,\Theta(\mu)\right) ,
\end{equation}
where the map $\Theta:\mathcal{P}\left(\bb{T}^d\times\bb{R}^d\right) 
\rightarrow\mathcal{P} \left(\bb{T}^d\times\bb{R}^d\right)$
is defined by
$\Theta(\mu)=\left( I_d\times\theta^{-1}I_d\right) \#\mu$.

Then the Hamiltonian defined as the Legendre
transform of $L^{\theta}$ is given by
\begin{equation}
	H^{\theta}\left( x,p,\mu\right)
	=
	\theta
	H\left( x,p,\Theta(\mu)\right).
\end{equation}
The definition of the Hamiltonian can naturally be extended
to $\theta=0$ by $H^0=0$,
the associated Lagrangian is $L^0=0$ if $\alpha=0$
and $L^0=\infty$ otherwise.
Consider the following system of MFGC,

\begin{equation}\label{eq:MFGCtheta}
	\left\{
	\begin{array}{lll}
		(i) & -\partial_tu + (-\Delta)^s u + H^\theta(x,D u,\mu) = 0, & {\text{in}}\ \bb{T}^d\times (0,T),\\[5pt]
		(ii) & \partial_t m + (-\Delta)^s m -\Div(D_p H^\theta(x,D u,\mu)m)=0, & {\text{in}}\ \bb{T}^d\times(0,T),\\[5pt]
		(iii) & \mu(t) =\del{I_d, -D_p H^\theta(x,Du,\mu)} \sharp m(t), & \text{in} \ [0,T],\\[5pt]
		(iv) & m(x,0)=m_0(x),\ \  u(x,T)=\theta u_T(x), & {\rm{in}}\ \bb{T}^d,
	\end{array}
	\right.
\end{equation}
for $(x,t)\in\bb{T}^d\times[0,T]$.

Assumptions \ref{hypo:Lstrconv}-\ref{hypo:LMono} are preserved, and inequalities convert to:

\begin{itemize}
		\item
	\label{hypo:Lcoertheta}
	$L^\theta(x,\alpha,\mu)
	\geq
	C_0^{-1} \theta^{1-\tilde{q}}|\alpha|^{\tilde{q}}
	-C_0\theta -C_0\theta^{1-\tilde{q}}\Lambda_{\tilde{q}}(\mu)^{\tilde{q}}  $,
	\item
	\label{hypo:Lboundtheta}
	$\abs{ L^\theta(x,\alpha,\mu)}+\abs{ D_xL^\theta(x,\alpha,\mu)} \leq C\theta +C_0 \theta^{1-\tilde{q}}\left( \vert \alpha\vert^{\tilde{q}} + \Lambda_{\tilde{q}}(\mu)^{\tilde{q}}   \right).	$ 
        \item \label{DpmuHtheta}
         $\abs{H^\theta(x,p,\mu)-H^\theta(x,p,\nu)}\leq C_R\theta W_r(\mu,\nu)$ and $\abs{D_{p}H^\theta(x,p,\mu)-D_pH^\theta(x,p,\nu)}\leq C'_R\theta W_r(\mu,\nu)$ for all $x \in \bb{T}^d$, $p \in B_R$, and $\mu,\nu \in \s{P}_\infty\del{\bb{T}^d \times \bb{R}^d}$ for some $r\geq 1$, $C_R>0$, and $C'_R\in(0,1)$ for all $p\in B_R$,
        \item \label{hypo: Hxboundtheta} $\abs{D_x H^\theta(x,p,\mu)}\leq C\theta\del{1+\vert p\vert}+C\theta^{1-\tilde{q}}\Lambda_{\tilde q}(\mu)^{\tilde q}$,
        \item \label{DxxHtheta}
        $\abs{D^2_{xx}H^\theta(x,p,\mu)}\leq C_H\theta$,
        $\abs{D^2_{px}H^\theta(x,p,\mu)}\leq C_H\theta$, for all $p,\mu$ in a bounded set.
\end{itemize}

The conclusion of Lemma \eqref{lem:Hbound} becomes,
	\begin{align}
		\label{eq:Hpboundtheta}
		\abs{ D_pH^\theta\left( x,p,\mu\right)}
		&\leq
		C_0\theta\left( 1+|p|^{q-1}
		\right)+C_0\Lambda_{\tilde{q}}\left(\mu\right),
		\\
		\label{eq:Hboundtheta}
		\abs{ H^\theta\left(x,p,\mu\right)}
		&\leq
		C_0\theta\left(1+|p|^{q}
		\right)+C_0 \theta^{1-\tilde{q}}\Lambda_{\tilde{q}}\left(\mu\right)^{\tilde{q}},
		\\
		\label{eq:Hcoertheta}
		p\cdot D_pH^\theta\left(x,p,\mu\right)
		-H^\theta\left(x,p,\mu\right)
		&\geq
		C_0^{-1}\theta|p|^q
		-C_0\theta -C_0\theta^{1-\tilde{q}}\Lambda_{\tilde{q}}\left(\mu\right)^{\tilde{q}},
	\end{align}
	for any $(t,x)\in[0,T]\times\bb{T}^d$,
	$p\in\bb{R}^d$ and $\mu\in{\mathcal P}\left(\bb{T}^d\times\bb{R}^d\right)$.

	\section{A priori estimates} \label{sec:a priori}
    We give a general outline of the a priori estimates found in this section:
    \begin{enumerate}
        \item We show in Section \ref{sub: FP est} that if $m$ is a weak solution of the fractional Fokker-Planck equation in the periodic setting with initial condition $m_0$, then $\enVert{m}_{\bb{H}^s_2(Q_T)}\leq C$, where $C$ is a constant that depends on $L^\infty$ bounds on $m_0,$ $H$, and $[\Div H]^-$.
        \item In Section \ref{sub: mu est} we use a result from \cite{kobeissi2022mean} to get uniqueness and moment estimates of the fixed point $\mu$ and we prove bounds on the integral of $\mu$. Both results are needed in proving estimates in \ref{sub: HJ est}.
        \item Section \ref{sub: HJ est} is devoted to the bootstrap argument to get optimal regularity of solutions to System \eqref{eq:MFGCtheta}. This is shown in four steps:
        \begin{enumerate}
            \item In Lemmas \ref{lem: u bdd} and \ref{lem: grad u bound} we show $u$ and its gradient are each bounded by $C\theta$, where $C$ depends on $T$ and constants from the assumptions.
            \item Next we get $L^\infty$ bounds on $H^\theta$ and $D_pH^\theta$. Note that this will also be needed for our result in \ref{sub: FP est} to hold.
            \item We use the result from Lemma \ref{lem: grad holder} to get H\"older estimates on the gradient of $u$ and then in Proposition \ref{prop:mu holder}, we get H\"older in time estimates on $\mu$. As a Corollary $H^\theta$ and $D_pH^\theta$ have H\"older regularity in time and space as well.
            \item Finally we use Lemma \ref{lem: full holder} to get full H\"older regularity of solutions $u$ to the fractional Hamilton Jacobi Equation.
        \end{enumerate}
        \item Section \ref{sub: sc} is devoted to showing the semi-concavity of solutions $u$ to the fractional Hamilton Jacobi Equation.
    \end{enumerate}

	\subsection{Estimates for the FP equation}\label{sub: FP est}
	
	Consider the fractional Fokker-Planck equation in the periodic setting
	
	\begin{equation}\label{ffpe}
		\begin{cases}
			\partial_t m+(-\Delta)^s m + \Div(bm)=0 &\mbox{in }Q_T\\
			m(x,0)=m_0(x)&\mbox{in }\mathbb{T}^d,
		\end{cases}
	\end{equation}
	
	where $ m_0\in L^\infty(\bb{T}^d)$ is a probability density. Given $b\in L^\infty(\bb{T}^d)$ such that $[\mbox{div} b]^-\in L^{\infty}(Q_T)$, a function $m\in L^2(0,T; H_2^s(\bb{T}^d))=\bb{H}_2^s(Q_T)$ with $\partial_t m\in L^2(0,T; H_2^{-1}(\bb{T}^d))=\bb{H}_2^{-1}(Q_T)$ is a weak solution to \eqref{ffpe} if for every $\varphi\in \s{C}^\infty(\bb{T}^d\times [0,T))$ one has
		
		$$\iint_{Q_T} \del{-m\partial_t \varphi -bm\cdot D\varphi +(-\Delta)^{\frac{s}{2}}m (-\Delta)^{\frac{s}{2}}\varphi} \dif x\dif t=\int_{\bb{T}^d} \varphi(x,0)m_0(x)\dif x.$$
	
	
	
	\begin{proposition}\label{prop:fp est}
		Let $m_0\in \s{C}^0(\bb{T}^d)$ and $b\in C_x^1(Q_T)$ such that $$\enVert{m_0}_\infty+\enVert{b}_\infty+\enVert{\sbr{\Div b}^-}_\infty\leq K $$
		
		Then there exists $C=C(K)$ such that for every weak solution $m$, it holds that:
		
		\begin{align}
			\label{eq: m bounds} \enVert{m}_{\infty;Q_T}&\leq C\\
			\label{eq: m s/2 bounds}\iint_{Q_T} [(-\Delta)^{s/2}m]^2 \dif x \dif t&\leq C\\
			 \enVert{\partial_t m}_{\bb{H}_2^{-1}(Q_T)}&\leq C
		\end{align}
	\end{proposition}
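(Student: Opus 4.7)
The plan is to derive the three estimates in sequence: first an $L^\infty$ bound on $m$ via an $L^p$-iteration, then the $L^2H^{s/2}$ estimate by testing against $m$ itself and absorbing the drift contribution with the previous bound, and finally the bound on $\partial_t m$ read off directly from the equation.

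For the $L^\infty$ bound, my plan is to formally test \eqref{ffpe} against $m^{p-1}$, using that $m\geq 0$ by the parabolic maximum principle for the fractional equation (equivalently, by the L\'evy process representation). The fractional-Laplacian term contributes a nonnegative quantity $\int m^{p-1}(-\Delta)^s m\dif x\geq 0$ by the Stroock--Varopoulos inequality, while integration by parts on the drift yields
\begin{equation*}
-p\int m^{p-1}\Div(bm)\dif x = -(p-1)\int \Div(b)\,m^p\dif x \leq (p-1)\int [\Div b]^-\, m^p\dif x \leq (p-1)K\int m^p\dif x.
\end{equation*}
Gronwall combined with $\int m_0\dif x=1$ then yields $\enVert{m(t)}_p^p\leq e^{(p-1)KT}\enVert{m_0}_\infty^{p-1}$, and sending $p\to\infty$ produces $\enVert{m}_{L^\infty(Q_T)}\leq e^{KT}\enVert{m_0}_\infty$.

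For \eqref{eq: m s/2 bounds}, I would test against $m$ and use the identity $\int m(-\Delta)^s m\dif x = \int[(-\Delta)^{s/2}m]^2\dif x$ together with $-\int m\Div(bm)\dif x=\tfrac{1}{2}\int \Div(b)\,m^2\dif x$. The resulting energy identity
\begin{equation*}
\enVert{m(T)}_2^2 + 2\iint_{Q_T}[(-\Delta)^{s/2}m]^2\dif x\dif t = \enVert{m_0}_2^2 + \iint_{Q_T}\Div(b)\,m^2\dif x\dif t
\end{equation*}
is controlled by the $L^\infty$ bound on $m$ (together with $\int m\dif x=1$) and the hypothesis on $[\Div b]^-$. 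For the last estimate, from $\partial_t m = -(-\Delta)^s m-\Div(bm)$ I note that since $s\leq 1$, the first summand lies in $L^2(0,T;H_2^{-s})\hookrightarrow \bb{H}_2^{-1}(Q_T)$ with norm bounded by $\enVert{m}_{\bb{H}_2^s(Q_T)}$, and the second lies in $\bb{H}_2^{-1}(Q_T)$ with norm bounded by $\enVert{b}_\infty\enVert{m}_{L^2(Q_T)}$.

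The main technical obstacle will be rigorously justifying the $L^p$-iteration for weak solutions, because $m^{p-1}$ is not directly an admissible test function in the given variational formulation. I would handle this by a standard approximation procedure -- either by mollifying $m$ in the spatial variable (exploiting the commutation of the mollifier with the fractional Laplacian on $\bb{T}^d$) and passing to the limit, or by using the truncated test functions $(m\wedge n)^{p-1}$ and letting $n\to\infty$. The remaining integration-by-parts computations are valid at the regularity $m\in \bb{H}_2^s(Q_T)$, $b\in \s{C}^1_x(Q_T)$ already guaranteed by the hypotheses, so this is the only delicate step in the argument.
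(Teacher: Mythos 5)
Your estimates (ii) and (iii) coincide with the paper's own argument: the paper likewise tests the equation against $m$, uses $\int m(-\Delta)^s m=\int[(-\Delta)^{s/2}m]^2$, bounds the drift contribution through $[\Div b]^-\leq K$ together with the $L^\infty$ bound and unit mass, and then obtains the $\bb{H}_2^{-1}$ bound on $\partial_t m$ directly from the equation by duality. Where you genuinely differ is the $L^\infty$ bound: the paper (following Cirant--Goffi) gets $\enVert{m}_{\infty;Q_T}\leq \enVert{m_0}_\infty e^{KT}$ by a comparison argument with the function $w(x,t)=m(x,t)e^{-(K+\varepsilon)t}-\enVert{m_0}_\infty$, $\varepsilon\to 0$, whereas you run a Moser-type $L^p$ iteration using the Stroock--Varopoulos-type positivity of $\int m^{p-1}(-\Delta)^s m$ for $m\geq 0$ and send $p\to\infty$. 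Both yield the same constant; the comparison route is shorter but rests on a comparison/maximum principle for weak solutions, while your route needs nonnegativity of $m$ and the admissibility of nonlinear test functions, which you rightly flag and propose to handle by truncation or mollification --- roughly the same amount of ``standard'' unstated machinery as the paper's appeal to comparison, so either is acceptable. One slip to correct in part (ii): the drift term in your energy identity has the wrong sign. Since $\int m\,\Div(bm)\dif x=\tfrac12\int \Div(b)\,m^2\dif x$, the identity reads $\enVert{m(T)}_2^2+2\iint_{Q_T}[(-\Delta)^{s/2}m]^2\dif x\dif t=\enVert{m_0}_2^2-\iint_{Q_T}\Div(b)\,m^2\dif x\dif t$; as you wrote it (with $+\iint\Div(b)m^2$) the estimate would require control of $[\Div b]^+$, which is not assumed, whereas with the correct sign the term is bounded by $\tfrac{K}{2}\iint_{Q_T}m^2\dif x\dif t\leq \tfrac{K}{2}T\enVert{m}_{\infty;Q_T}$, exactly as in the paper, and your stated conclusion (control via $[\Div b]^-$) then goes through.
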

    \begin{proof}
        We follow the proof of Proposition 3.3 from \cite{cirant2019existence}. To show \eqref{eq: m bounds}, one can use standard comparison arguments with the function
        $$w(x,t):=m(x,t)e^{-(K+\varepsilon)t}-\enVert{m_0}_\infty$$
        with $\varepsilon\to0$ to yield 
        $$\enVert{m}_{\infty;Q_T}\leq \enVert{m_0}_\infty e^{KT}.$$
        Now by multiplying the equation in \eqref{ffpe} by $m$ and integrating over $Q_T$ we get
        $$\frac{1}{2}\int_0^T\od{}{t}\enVert{m}_{L^2(\bb{T}^d)}^2+\iint_{Q_T}m(-\Delta)^sm\dif x\dif t=-\iint_{Q_T}m\Div(bm)\dif x\dif t.$$
        Then by the properties of the fractional laplacian and integration by parts, 
        \begin{equation}
        \begin{split}
            \frac{1}{2}\int_0^T\od{}{t}\enVert{m}_{L^2(\bb{T}^d)}^2+\iint_{Q_T}\sbr{(-\Delta)^{\frac{s}{2}}m}^2\dif x\dif t&=\iint_{Q_T}mb\cdot Dm\dif x\dif t\\
            &=-\frac{1}{2}\iint_{Q_T}\del{\Div b}m^2\dif x\dif t.
        \end{split}
        \end{equation}
        Now using $[\Div(b)]^-\leq K$ and the above $L^\infty$ bound on $m$ yields
        $$\frac{1}{2}\enVert{m(T)}_{L^2(\bb{T}^d)}+\iint_{Q_T}\sbr{(-\Delta)^{\frac{s}{2}}m}^2\dif x\dif t\leq C(K)+\frac{1}{2}\enVert{m(0)}_{L^2(\bb{T}^d)},$$
        which gives us inequality \eqref{eq: m s/2 bounds}. Now from \eqref{ffpe} one can get
        \begin{equation*}
        \begin{split}
        \abs{\iint_{Q_T}\partial_tm\varphi\dif x\dif t}\leq \enVert{b}_{L^\infty(Q_T)}\enVert{m}_{L^2(Q_T)}\enVert{D\varphi}_{L^2(Q_T)}+\enVert{(-\Delta)^\frac{s}{2}m}_{L^2(Q_T)}\enVert{\varphi}_{\bb{H}^s_2(Q_T)}\\
        \leq C\enVert{\varphi}_{\bb{H}^1_2(Q_T)},
        \end{split}
        \end{equation*}
        and the last estimate follows.
    \end{proof}
    
\subsection{Estimates for the fixed point $\mu$}\label{sub: mu est}
The following Lemma from \cite{kobeissi2022mean} will be useful moving forward:

\begin{lemma}\label{lem: fixmu}
	Assume that $L$ satisfies \ref{hypo:Lstrconv}-\ref{hypo:Lbound}. For $t\in[0,T]$, $m\in \mathcal{P}(\mathbb{T}^d)$ and $p\in \s{C}^0(\mathbb{T}^d; \mathbb{R}^d )$, there exists a unique $\mu \in \mathcal{P}(\mathbb{T}^d \times \mathbb{R}^d)$ such that
	
	\begin{equation}
		\mu = \left(I_d, -D_pH(\cdot,p(\cdot),\mu)\right)\# m,
	\end{equation}
	
	Moreover, $\mu$ satisfies 
	
	\begin{align}
		\Lambda_{\tilde{q}}(\mu)^{\tilde{q}} &\leq 4C_0^2 +\dfrac{(\tilde{q})^{q-1} (2C_0)^q}{q}\|p\|_{L^q(m)}^q,\\
		\Lambda_\infty (\mu) &\leq C_0 \left( 1+\|p\|_\infty + \Lambda_{\tilde{q}}(\mu) \right). 
	\end{align}
\end{lemma}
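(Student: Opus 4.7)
My plan is to split the statement into three parts—uniqueness, existence, and the two moment bounds—after first reformulating the equation. Any $\mu$ satisfying the fixed-point identity is automatically of the form $\mu = (I_d,\alpha^\mu)\# m$ with velocity field $\alpha^\mu(x) = -D_pH(x,p(x),\mu)$, and by strict convexity \ref{hypo:Lstrconv} together with Legendre duality this is equivalent to the pointwise first-order condition $D_\alpha L(x,\alpha^\mu(x),\mu)=-p(x)$ for $m$-a.e.\ $x$. So the problem reduces to a fixed point for the velocity field $\alpha^\mu \in L^\infty(m;\mathbb R^d)$.

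For uniqueness, suppose two fixed points $\mu_1,\mu_2$ exist, with velocity fields $\alpha_1,\alpha_2$. Pushing forward the Lasry-Lions inequality \ref{hypo:LMono} applied to $(\mu_1,\mu_2)$ through the identities $\mu_i=(I_d,\alpha_i)\# m$ yields
\[
\int_{\mathbb T^d}\!\bigl[L(x,\alpha_1,\mu_1)-L(x,\alpha_1,\mu_2)-L(x,\alpha_2,\mu_1)+L(x,\alpha_2,\mu_2)\bigr]\,dm(x)\geq 0.
\]
On the other hand, summing the two strict secant inequalities obtained from strict convexity of $L(x,\cdot,\mu_i)$ at the points $\alpha_i$, together with the first-order conditions $D_\alpha L(x,\alpha_i,\mu_i)=-p(x)$, produces the opposite strict inequality pointwise on the set $\{\alpha_1\neq\alpha_2\}$. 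Integration forces $m(\{\alpha_1\neq\alpha_2\})=0$, and hence $\mu_1=\mu_2$. For existence, I would apply Schauder's fixed-point theorem to the map $\Phi:\mu\mapsto(I_d,-D_pH(\cdot,p(\cdot),\mu))\# m$ on a convex weak-$*$ compact subset $\mathcal P_{\infty,R}(\mathbb T^d\times\mathbb R^d)$; continuity of $\Phi$ in the weak-$*$ topology comes from the continuity of $D_pH$ asserted in Lemma \ref{lem:Hbound}, and the invariance $\Phi(K)\subset K$ is secured by the a priori $\Lambda_\infty$ bound derived below, which dictates the admissible choice of $R$.

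For the $\Lambda_{\tilde q}$ moment bound, the key tool is the Fenchel equality $p(x)\cdot\alpha^\mu(x) + L(x,\alpha^\mu(x),\mu) = -H(x,p(x),\mu)$ combined with the trivial lower bound $-H(x,p,\mu)\leq L(x,0,\mu)$ obtained by testing $\alpha=0$ in the sup defining $H$. Substituting the coercivity \ref{hypo:Lcoer} into the left-hand side and the growth bound \ref{hypo:Lbound} into the right-hand side yields a pointwise inequality for $|\alpha^\mu(x)|^{\tilde q}$ featuring the cross term $|p(x)||\alpha^\mu(x)|$ and a $\Lambda_{\tilde q}(\mu)^{\tilde q}$ contribution; integrating against $m$ and applying Young's inequality $ab\le\tfrac{\lambda^q a^q}{q}+\tfrac{b^{\tilde q}}{\lambda^{\tilde q}\tilde q}$ with a sharply chosen $\lambda$—tuned so that the absorbing coefficient on $\Lambda_{\tilde q}(\mu)^{\tilde q}$ exactly matches what remains on the left—allows every $\Lambda_{\tilde q}(\mu)^{\tilde q}$ term to be absorbed and delivers the stated explicit bound. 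The $\Lambda_\infty$ inequality then follows immediately by applying the pointwise estimate \eqref{eq:Hpbound}, $|\alpha^\mu(x)|\leq C_0(1+|p(x)|^{q-1}+\Lambda_{\tilde q}(\mu))$, and taking the supremum over $x\in\mathrm{supp}(m)$ (absorbing the $|p|^{q-1}$ factor against $\|p\|_\infty$ by enlarging $C_0$ if needed). The main obstacle is precisely closing the $\Lambda_{\tilde q}$ estimate despite the $\Lambda_{\tilde q}(\mu)^{\tilde q}$ contribution that \ref{hypo:Lbound} injects into the right-hand side; once that absorption is achieved, the other parts of the lemma drop out cleanly.
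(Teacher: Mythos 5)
The paper itself does not prove this lemma (it is quoted from \cite{kobeissi2022mean}), so your argument has to stand on its own. Your uniqueness step is correct and standard: pushing the Lasry--Lions inequality through $\mu_i=(I_d,\alpha_i)\#m$ and adding the two strict secant inequalities with $D_\alpha L(x,\alpha_i,\mu_i)=-p(x)$ does force $\alpha_1=\alpha_2$ $m$-a.e. The genuine gap is in the $\Lambda_{\tilde q}$ estimate, which you flag as ``the main obstacle'' but never actually resolve. Your plan applies the coercivity \ref{hypo:Lcoer} to $L(x,\alpha^\mu(x),\mu)$ and the growth bound \ref{hypo:Lbound} to $L(x,0,\mu)$ with the measure argument equal to $\mu$ itself. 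Integrating the pointwise inequality $p\cdot\alpha^\mu+L(x,\alpha^\mu,\mu)\le L(x,0,\mu)$ against $m$ and using $\Lambda_{\tilde q}(\mu)^{\tilde q}=\int|\alpha^\mu|^{\tilde q}\dif m$ gives
\begin{equation*}
C_0^{-1}\Lambda_{\tilde q}(\mu)^{\tilde q}\le 2C_0\bigl(1+\Lambda_{\tilde q}(\mu)^{\tilde q}\bigr)+\int|p|\,|\alpha^\mu|\dif m,
\end{equation*}
and no choice of $\lambda$ in Young's inequality can help: Young only treats the cross term, while the $2C_0\Lambda_{\tilde q}(\mu)^{\tilde q}$ on the right has a coefficient larger than the $C_0^{-1}$ on the left (unless $C_0<1/\sqrt{2}$, which is not assumed). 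The missing ingredient is the monotonicity \ref{hypo:LMono} applied to the pair $(\mu,\tilde\mu)$ with $\tilde\mu=m\otimes\delta_0$: it transfers the measure argument of $L$ to $\tilde\mu$, whose $\tilde q$-moment vanishes, so \ref{hypo:Lcoer} and \ref{hypo:Lbound} cost only a constant $2C_0$, leaving $C_0^{-1}\Lambda_{\tilde q}(\mu)^{\tilde q}\le 2C_0+\|p\|_{L^q(m)}\Lambda_{\tilde q}(\mu)$, which Young then closes and which reproduces exactly the $4C_0^2$ structure of the stated bound. This is the same mechanism the paper uses in its time-dependent analogue, Lemma \ref{lem: moment int}.

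Two secondary issues. For existence, the invariance $\Phi(K)\subset K$ in Schauder cannot be ``secured by the a priori $\Lambda_\infty$ bound'': that bound concerns fixed points, whereas Schauder requires an estimate on $\Phi(\mu)$ for arbitrary $\mu\in K$, and \eqref{eq:Hpbound} alone gives $\Lambda_\infty(\Phi(\mu))\le C_0(1+\|p\|_\infty^{q-1}+\Lambda_{\tilde q}(\mu))$, which produces no invariant $\mathcal{P}_{\infty,R}$ without smallness of $C_0$; here too the monotonicity has to enter (or one argues along Minty--Browder lines). Finally, in the $\Lambda_\infty$ step, ``absorbing the $|p|^{q-1}$ factor against $\|p\|_\infty$ by enlarging $C_0$'' is not legitimate, since $C_0$ cannot depend on $p$; \eqref{eq:Hpbound} yields $\|p\|_\infty^{q-1}$, which matches the stated linear dependence only when $q=2$.
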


We will see in the proof of Lemma \ref{lem: grad u bound}, that we also need a bound on the time integral of the moments of $\mu$.
\begin{lemma}\label{lem: moment int}
    Under assumptions \ref{hypo:Lstrconv}-\ref{hypo:Lbound} and \ref{hypo:Monogregx}, if $(u,m,\mu)$ is a solution to \eqref{eq:MFGCtheta} for $\theta\in(0,1]$, 
    $$\int_0^T\Lambda_{\tilde q}(\mu(t))^{\tilde q}\dif t\leq 2C_0^2\theta^{\tilde q}\del{1+T}.$$
\end{lemma}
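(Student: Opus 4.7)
The plan is to combine the Lasry-Lions monotonicity with an MFG duality identity, controlling the key quantities against a ``zero-control'' reference measure whose moment vanishes. The pointwise-in-time bound on $\Lambda_{\tilde q}(\mu(t))^{\tilde q}$ comes from monotonicity, and the integrated gradient term is then estimated by energy balance.

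\textbf{Step 1 (pointwise monotonicity bound).} At each $t$, apply \ref{hypo:LMono} with $\mu^1 = \mu(t)$ and the test measure $\mu^0(t) := (I_d,0)\sharp m(t)$, which satisfies $\Lambda_{\tilde q}(\mu^0(t))=0$. Expanding the inequality and using the decomposition $d\mu^0 = dm \otimes \delta_0$, the Legendre identity $L^\theta(x,\alpha^{\mu},\mu) = Du \cdot D_p H^\theta - H^\theta$ (where $\alpha^{\mu} = -D_p H^\theta(\cdot,Du,\mu)$), and the sub-optimality inequality $L^\theta(x,0,\mu)\geq -H^\theta(x,Du,\mu)$, one combines with the coercivity/upper bound on $L^\theta$ (evaluated at $\mu^0$, where the $\Lambda_{\tilde q}$ terms vanish) to obtain
\begin{equation*}
\int_{\bb{T}^d} Du \cdot D_p H^\theta(x,Du,\mu)\, dm \;\geq\; C_0^{-1}\theta^{1-\tilde q}\Lambda_{\tilde q}(\mu(t))^{\tilde q} - 2C_0\theta.
\end{equation*}
Rearranged, this reads $\Lambda_{\tilde q}(\mu(t))^{\tilde q} \leq C_0\theta^{\tilde q -1}\int Du\cdot D_p H^\theta\, dm + 2C_0^2\theta^{\tilde q}$, which after integration over $[0,T]$ yields
\begin{equation*}
\int_0^T\!\Lambda_{\tilde q}(\mu(t))^{\tilde q}\,dt \;\leq\; C_0\theta^{\tilde q-1}\!\int_0^T\!\!\int_{\bb{T}^d} Du\cdot D_p H^\theta\, m\, dx\, dt + 2C_0^2\theta^{\tilde q} T.
\end{equation*}

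\textbf{Step 2 (duality identity and estimate of the gradient term).} Multiplying \eqref{eq:MFGCtheta}(i) by $m$ and \eqref{eq:MFGCtheta}(ii) by $u$, subtracting, integrating over $Q_T$, and using the self-adjointness of $(-\Delta)^s$ together with the conditions $u(T)=\theta u_T$ and $m(0)=m_0$, one obtains
\begin{equation*}
\int_0^T\!\!\int Du\cdot D_p H^\theta\, m\, dx\, dt = \int u(x,0) m_0(x)\, dx - \theta\int u_T(x) m(x,T)\, dx + \int_0^T\!\!\int H^\theta(x,Du,\mu)\, m\, dx\, dt.
\end{equation*}
Bound $\int u(0) m_0$ from above by the verification principle applied to the suboptimal control $\alpha\equiv 0$: this gives an upper bound by $\int_0^T\int L^\theta(x,0,\mu)\,\tilde m\, dx\,dt + \theta\int u_T\, \tilde m(T)$, where $\tilde m$ is the pure fractional diffusion density with initial condition $m_0$; applying the upper bound on $L^\theta(x,0,\mu)$ and \ref{hypo:Monogregx} yields an estimate of the form $C\theta(1+T) + C\theta^{1-\tilde q}\int_0^T\Lambda_{\tilde q}(\mu)^{\tilde q} dt$. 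The terminal term $|\theta\int u_T m(T)|$ is bounded by $C_0\theta$ via \ref{hypo:Monogregx}, and the $H^\theta$ integral is bounded above using \eqref{eq:Hboundtheta}.

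\textbf{Step 3 (absorption).} Substituting the Step 2 estimate into the integrated Step 1 inequality produces
\begin{equation*}
\int_0^T\!\Lambda_{\tilde q}(\mu)^{\tilde q}\, dt \;\leq\; 2C_0^2\theta^{\tilde q}(1+T) + c\!\int_0^T\!\Lambda_{\tilde q}(\mu)^{\tilde q}\, dt,
\end{equation*}
with $c$ an absolute constant arising from the Young's inequality applied to the $\int H^\theta m$ term. The main obstacle is selecting the Young coefficients so that $c<1$, allowing absorption while keeping the leading constant exactly $2C_0^2$. The delicate point is that the monotonicity bound in Step 1 already has the sharp coefficient $2C_0^2\theta^{\tilde q}$; the auxiliary bounds must not inflate this constant when combined. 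Once $c<1$ is achieved, absorbing yields the claim.
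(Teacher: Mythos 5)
Your Step 1 is fine: applying \ref{hypo:LMono} at each $t$ with $\mu^0(t)=(I_d,0)\sharp m(t)$, using the Legendre identity and the suboptimality $L^\theta(x,0,\mu)\geq -H^\theta(x,Du,\mu)$, and invoking the coercivity at $\mu^0$ (whose moment vanishes) does give $\Lambda_{\tilde q}(\mu(t))^{\tilde q}\leq C\theta^{\tilde q-1}\int Du\cdot D_pH^\theta\,dm + C\theta^{\tilde q}$. The gap is in Steps 2--3. First, to convert the duality identity into an upper bound on $\int_0^T\!\int Du\cdot D_pH^\theta\,m$ you must bound $\int_0^T\!\int H^\theta(x,Du,\mu)\,m$ \emph{from above}, and \eqref{eq:Hboundtheta} gives $|H^\theta|\leq C_0\theta(1+|Du|^q)+C_0\theta^{1-\tilde q}\Lambda_{\tilde q}(\mu)^{\tilde q}$: the term $\theta\int_0^T\!\int |Du|^q m$ is not controlled at this stage of the paper. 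In fact all gradient bounds (Lemma \ref{lem: grad u bound}, and any bound on $\int |Du|^q m$ via coercivity) are derived \emph{after} and \emph{from} the present lemma, so your scheme is circular. Second, even setting that aside, the absorption in Step 3 does not close: the self-referential contributions come from $L^\theta(x,0,\mu)$ in the verification bound for $\int u(0)m_0$ and from the $\Lambda_{\tilde q}(\mu)^{\tilde q}$ term in $H^\theta$, each carrying the structural constant $C_0$, and after multiplying by the factor $C_0\theta^{\tilde q-1}$ from Step 1 the coefficient of $\int_0^T\Lambda_{\tilde q}(\mu)^{\tilde q}$ on the right is of order $C_0^2$. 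This is not a Young-type constant you can tune below $1$; there is no smallness (in $T$ or in the data) available, so ``$c<1$'' cannot be arranged in general, and even if it could, division by $1-c$ would destroy the constant $2C_0^2(1+T)$ you are aiming for.

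The paper's proof sidesteps both issues by never invoking the PDE duality or any bound on $H^\theta$ or $Du$. It compares the optimal controlled trajectory (law $\mu(t)$) with the zero-control pure-jump process started from $m_0$ (law $\tilde\mu(t)=\tilde m(t)\times\delta_0$), using optimality to get $I(\mu,\mu)-I(\mu,\tilde\mu)\leq 2\theta\|u_T\|_\infty$, and then applies \ref{hypo:LMono} integrated in time to the pair $(\mu,\tilde\mu)$. The coercivity is then applied to $L^\theta(x,\alpha,\tilde\mu)$, i.e.\ with the \emph{reference} measure in the measure slot, so the negative moment term is simply $-C_0\theta$ and no $\Lambda_{\tilde q}(\mu)$ appears on the right-hand side at all; the bound closes directly with the constant $2C_0^2\theta^{\tilde q}(1+T)$. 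If you want to salvage your route, you would need to replace the upper bound on $\int H^\theta m$ by a comparison of costs along trajectories (which is exactly the paper's optimality step), at which point the duality identity becomes unnecessary.
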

\begin{proof}
    Our proof follows the strategy of the first step in \cite[Lemma 5.1]{kobeissi2022mean}.
    Define $(X(t),\alpha(t))$ as follows:
    \begin{equation}
        \begin{cases}
            \alpha(t)=-D_pH^\theta(X(t),Du(X(t)),\mu(t)),\\
            \dif X(t)=\alpha(t)\dif t+\dif J(t),\\
            X(0)\sim m_0
        \end{cases}
    \end{equation}
    where $J(t)$ is the L\'evy jump process generated by the fractional Laplacian.
    Note $\Lambda_{\tilde q}(\mu(t))^{\tilde q}=\int \abs{\alpha}^{\tilde q}\mu(t)(\dif x,\dif \alpha)=\bb{E}|\alpha(t)|^{\tilde q}$. Also, $(X(t),\alpha(t))$ minimizes
    \begin{equation}
        \bb{E}\sbr{\int_0^TL^\theta(X(t),\alpha(t),\mu(t))\dif t+\theta u_T(X(T))}.
    \end{equation}
    Now recall that for any $t\in[0,T]$, $m(t)$ is the law of $X(t)$ and $\mu(t)$ is the law of $(X(t),\alpha(t))$. We now introduce ${\tilde X}(t)$ as the stochastic process defined by
    \begin{equation}
        \begin{cases}
            \dif {\tilde X}(t)=\dif J(t)\\
            {\tilde X}(0)\sim m_0,
        \end{cases}
    \end{equation}
    and set ${\tilde m}(t)=\mathcal{L}({\tilde X}(t))$ and ${\tilde \mu}(t)=\mathcal{L}({\tilde X}(t))\times \delta_0,$ for $t\in[0,T].$ Now for $\mu^1,\mu^2\in\mathcal{P}(\bb{R}^d\times\bb{R}^d),$ define $I(\mu^1,\mu^2)$ as follows:
    $$I(\mu^1,\mu^2)=\int_0^T\int_{\bb{T}^d\times\bb{R}^d}L^\theta(x,\alpha,\mu^1(t))\dif \mu^2(t)(x,\alpha)\dif t $$
    Since 
    \begin{equation}
        \bb{E}\sbr{\int_0^TL^\theta(X(t),\alpha(t),\mu(t))\dif t+\theta u_T(X(T))}\leq\bb{E}\sbr{\int_0^TL^\theta({\tilde X}(t),0,\mu(t))\dif t+\theta u_T({\tilde X}(T))},
    \end{equation}
    \begin{equation}
        I(\mu,\mu)+\int \theta u_T(x)\dif m(T)(x)\leq I(\mu,{\tilde \mu})+\int\theta u_T(x)\dif {\tilde m}(T)(x).
    \end{equation}
    This yields 
    \begin{equation}\label{eq: I mu dif}
        I(\mu,\mu)-I(\mu,{\tilde \mu})\leq 2\theta\enVert{u_T}_\infty.
    \end{equation}
    Now integrating over $[0,T]$ the Lasry-Lions monotonicity condition on $L$, which is assumption \ref{hypo:LMono}, with $(\mu(t),{\tilde \mu}(t))$, gives
    \begin{equation}
        \begin{split}
            I({\tilde\mu},\mu)&\leq I(\mu,\mu)-I(\mu,{\tilde\mu})+I({\tilde\mu},{\tilde\mu})\\
            &\leq 2\theta\enVert{u_T}_\infty+I({\tilde \mu},{\tilde\mu}).
        \end{split}
    \end{equation}
    Then from \ref{hypo:Lbound}
    $$I({\tilde \mu},{\tilde\mu})=\int_0^T\int_{\bb{T}^d}\theta L(x,0,{\tilde\mu}){\tilde m}(t,\dif x)\dif t\leq C_0\theta T$$
    Also,
    \begin{equation}
        \begin{split}
            I({\tilde\mu},\mu)\geq \int_0^T\int_{\bb{T}^d\times\bb{R}^d}\del{C^{-1}_0\theta^{1-\tilde q }|\alpha|^{\tilde q}-C_0\theta}\dif \mu(t)(x,\alpha)\dif t\\
            =C_0^{-1}\theta^{1-\tilde q }\int_0^T\Lambda_{\tilde q}(\mu(t))^{\tilde q}\dif t-C_0\theta T.
        \end{split}
    \end{equation}
    Combining these results we see
    $$\int_0^T\Lambda_{\tilde q}(\mu(t))^{\tilde q}\dif t\leq 2C_0\theta^{\tilde q}\del{\enVert{u_T}_\infty+C_0T}.$$
    and using \ref{hypo:Monogregx} the desired result holds.
\end{proof}

    \subsection{Estimates for HJ equation}\label{sub: HJ est}
    We study the Fractional Hamilton Jacobi equation
    \begin{equation}\label{hjb}
		\begin{cases}
			-\partial_t u+(-\Delta)^s u +H^\theta(x,D u,\mu)=0 &\mbox{on }Q_T\\
			u(x,T)=\theta u_T(x)&\mbox{in }\mathbb{T}^d
		\end{cases}
	\end{equation}

\begin{lemma}\label{lem: u bdd}
    Suppose $L$ satisfies \ref{hypo:Lstrconv}-\ref{hypo:Lbound} and \ref{hypo:Monogregx} holds. If $(u,m,\mu)$ is a solution to \eqref{eq:MFGCtheta},
    \begin{equation}\label{est2}
	   \norm{u}_{\infty,Q_T} \leq C\theta,
    \end{equation}
    where $C$ is a finite constant that depends on $C_0$ and $T$.
\end{lemma}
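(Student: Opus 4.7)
The plan is to prove both upper and lower bounds on $u$ by a comparison argument against spatially-constant barriers built from the Hamiltonian bound \eqref{eq:Hboundtheta} and the moment integral estimate of Lemma \ref{lem: moment int}. Since $H^\theta(x,0,\mu)$ is controlled by $C_0\theta + C_0\theta^{1-\tilde q}\Lambda_{\tilde q}(\mu)^{\tilde q}$, a function depending only on $t$ that solves the corresponding ODE in $t$ will be a super- or sub-solution of the HJ equation. The integrability of $\Lambda_{\tilde q}(\mu)^{\tilde q}$ in time then converts the $\theta^{1-\tilde q}$ prefactor into an overall $\theta$ bound.

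Concretely, I would define
\begin{equation*}
    \bar u(t) := \theta\norm{u_T}_\infty + \int_t^T\del{C_0\theta + C_0\theta^{1-\tilde q}\Lambda_{\tilde q}(\mu(s))^{\tilde q}}\dif s.
\end{equation*}
Since $\bar u$ is independent of $x$, one has $D\bar u \equiv 0$ and $(-\Delta)^s\bar u\equiv 0$, and by \eqref{eq:Hboundtheta}, $H^\theta(x,0,\mu(t))\leq C_0\theta + C_0\theta^{1-\tilde q}\Lambda_{\tilde q}(\mu(t))^{\tilde q}$, so $\bar u$ is a classical supersolution of \eqref{hjb}, and $\bar u(T)=\theta\norm{u_T}_\infty \geq \theta u_T(x)$. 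Symmetrically, $\underline u(t) := -\theta\norm{u_T}_\infty - \int_t^T\del{C_0\theta + C_0\theta^{1-\tilde q}\Lambda_{\tilde q}(\mu(s))^{\tilde q}}\dif s$ is a subsolution with $\underline u(T)\leq \theta u_T(x)$.

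Next I would run a standard comparison argument on the torus: to show $u\leq \bar u$, set $\bar u_\varepsilon := \bar u + \varepsilon(T-t)$ so that $\bar u_\varepsilon$ is a strict supersolution. If $w_\varepsilon:=u-\bar u_\varepsilon$ attains a positive maximum at $(x_0,t_0)$, then $t_0<T$ (since $w_\varepsilon(\cdot,T)\leq 0$); at this maximum $Dw_\varepsilon=0$ (no spatial boundary), $(-\Delta)^s w_\varepsilon(x_0,t_0)\geq 0$, and $\partial_t w_\varepsilon(x_0,t_0)\leq 0$. Using $Du=D\bar u_\varepsilon$ at the max, so $H^\theta(x_0,Du,\mu)-H^\theta(x_0,D\bar u_\varepsilon,\mu)=0$, subtracting the equations and using the strict supersolution property yields $(-\Delta)^sw_\varepsilon(x_0,t_0)\leq -\varepsilon<0$, contradicting the maximum principle for $(-\Delta)^s$. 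Hence $w_\varepsilon\leq 0$; letting $\varepsilon\to 0$ gives $u\leq \bar u$. The symmetric argument gives $u\geq \underline u$.

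Finally, I would estimate $\bar u$ from above (and $\underline u$ from below) using Lemma \ref{lem: moment int}, which yields $\int_0^T\Lambda_{\tilde q}(\mu(s))^{\tilde q}\dif s\leq 2C_0^2\theta^{\tilde q}(1+T)$, and \ref{hypo:Monogregx} giving $\norm{u_T}_\infty\leq C_0$. Combining,
\begin{equation*}
    \abs{\bar u(t)} \leq C_0\theta + C_0\theta T + 2C_0^3\theta(1+T) \leq C\theta,
\end{equation*}
with $C=C(C_0,T)$, and similarly for $\underline u$, proving \eqref{est2}. The main subtlety is not any single computation but the comparison step, where one must be careful to use the constancy of $\bar u$ in $x$ (so the Hamiltonian contributions cancel at any interior maximum without needing Lipschitz-in-$p$ control of $H^\theta$) and to perturb by $\varepsilon(T-t)$ to avoid relying on a strong fractional maximum principle.
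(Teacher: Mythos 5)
Your proposal is correct and follows essentially the same route as the paper: the paper compresses your barrier construction into the phrase ``a linearization argument and comparison principle,'' arriving at the same intermediate bound $\norm{u}_{\infty,Q_T}\leq \theta\norm{u_T}_\infty+C_0\theta\int_0^T\bigl(1+\theta^{-\tilde q}\Lambda_{\tilde q}(\mu(s))^{\tilde q}\bigr)\dif s$, and then concludes exactly as you do via Lemma \ref{lem: moment int} and \ref{hypo:Monogregx}. The only remark is that for the supersolution check you need the lower bound $H^\theta(x,0,\mu)\geq -C_0\theta-C_0\theta^{1-\tilde q}\Lambda_{\tilde q}(\mu)^{\tilde q}$ rather than the upper bound you cite, but since \eqref{eq:Hboundtheta} is a two-sided estimate this is immaterial.
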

\begin{proof}
    Since $u$ is a solution to \eqref{hjb}, a linearization argument and comparison principle yield
    \begin{equation}
	   \norm{u}_{\infty,Q_T} \leq 
        \theta\norm{u_T}_{\infty,\bb{T}^d}+C_0\theta\int_0^T\del{ 1+\theta^{-\tilde q}\Lambda_{\tilde q}(\mu(s))^{\tilde q}}\dif s
    \end{equation}
    Then using Lemma \ref{lem: moment int} and \ref{hypo:Monogregx},
    \begin{equation}
        \enVert{u}_{\infty,Q_T}\leq C_0\del{1+T+2C_0^2\del{1+T}}\theta
    \end{equation}
\end{proof}

\begin{lemma}\label{lem: grad u bound}
	Suppose $H$ satisfies assumption \ref{hypo: Hxbound} and \ref{hypo:Monogregx} holds. If $(u,m,\mu)$ is a solution of \eqref{eq:MFGCtheta}, 
    $$\norm{D u}_\infty\leq C\theta$$
    for some finite $C>0$ that depends on $\enVert{Du_T}_\infty$.
\end{lemma}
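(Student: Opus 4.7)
My plan is to close a linear Gronwall inequality for $\enVert{Du(t)}_\infty$ by differentiating the HJ equation in $x$, thereby reducing to a linear fractional parabolic equation for $u_{x_i}$. Assumption \ref{hypo: Hxboundtheta} is designed for exactly this: it is the linear (not superlinear) growth of $D_xH^\theta$ in $p$ that prevents a nonlinear blow-up in the Gronwall step, whereas a naive bound using $H^\theta$ itself would have to contend with $|p|^q$ growth.

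Differentiating equation (i) of \eqref{eq:MFGCtheta} with respect to $x_i$ yields the linear fractional parabolic equation
\begin{equation*}
-\partial_t u_{x_i} + (-\Delta)^s u_{x_i} + D_pH^\theta(x,Du,\mu)\cdot Du_{x_i} = -D_{x_i}H^\theta(x,Du,\mu),
\end{equation*}
with terminal condition $u_{x_i}(\cdot,T)=\theta u_{T,x_i}$. Treating $(u,m,\mu)$ as a given classical solution, this is a linear equation in $v:=u_{x_i}$ with bounded drift and bounded source. By the maximum principle for fractional parabolic equations -- or equivalently by a Feynman--Kac representation using the $2s$-stable L\'evy process driven by $(-\Delta)^s$ and drifting with $-D_pH^\theta$ -- one obtains the $L^\infty$ bound
\begin{equation*}
\enVert{Du(t)}_\infty \leq \theta\enVert{Du_T}_\infty + \int_t^T \enVert{D_xH^\theta(\cdot,Du(s),\mu(s))}_\infty \, ds.
\end{equation*}

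Inserting \ref{hypo: Hxboundtheta} bounds the integrand by $C\theta(1+\enVert{Du(s)}_\infty) + C\theta^{1-\tilde q}\Lambda_{\tilde q}(\mu(s))^{\tilde q}$. Applying Lemma \ref{lem: moment int}, the moment contribution is controlled by $2CC_0^2\theta(1+T)$, producing
\begin{equation*}
\enVert{Du(t)}_\infty \leq A\theta + B\theta\int_t^T \enVert{Du(s)}_\infty\,ds
\end{equation*}
with $A,B$ depending only on $C_0$, $T$, and $\enVert{Du_T}_\infty$. A backward Gronwall inequality then delivers $\enVert{Du}_\infty\leq C\theta$, as required.

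The main technical obstacle is justifying the linearization step rigorously, since the differentiated equation presupposes $D^2u$ and $\partial_tDu$ that are not immediately guaranteed by the assumed $\s{C}^{2s,1}$ regularity, and moreover the drift $D_pH^\theta$ depends on the very gradient one seeks to bound. The cleanest resolution is to mollify $H^\theta$ and $u_T$, perform the argument on the smoother approximate problems with constants uniform in the mollification parameter, and pass to the limit; the underlying $L^\infty$-contraction of the semigroup generated by $(-\Delta)^s + D_pH^\theta\cdot D$ is stable under this approximation.
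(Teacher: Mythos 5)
Your argument is correct and rests on the same pillars as the paper's proof: differentiate the HJ equation in space, use the $\theta$-scaled version of the growth bound \ref{hypo: Hxbound} on $D_xH^\theta$, absorb the moment term via Lemma \ref{lem: moment int}, and conclude by a maximum principle for the fractional operator. The implementation differs in how the estimate is closed. The paper runs a Bernstein-type argument: it forms $v_e=\frac{1}{2}(\partial_e u)^2$ and $v=\frac{1}{2}|Du|^2$ (implicitly using the pointwise inequality $(-\Delta)^s(f^2/2)\le f\,(-\Delta)^s f$ to pass to the equation for $v_e$), and then compares $v$ with an explicit supersolution $w(x,t)=e^{-C\theta t}v(x,T-t)-C\theta t-C\theta^{1-\tilde q}\int_0^t\Lambda_{\tilde q}(\mu(s))^{\tilde q}\,ds$. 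You instead keep the equations for the directional derivatives linear, use the $L^\infty$ comparison for the linear drift--diffusion equation (whose constant indeed does not depend on the drift $D_pH^\theta$, since constants-in-space are unaffected by both the drift and $(-\Delta)^s$), and close with a backward Gronwall inequality; the paper's exponential weight is Gronwall in disguise, so the two closures are essentially equivalent. Your linear route has the mild advantages of avoiding the fractional chain-rule inequality and of feeding in the terminal data as $\theta\|Du_T\|_\infty$ rather than through the squared quantity, so the $O(\theta)$ bound on $\|Du\|_\infty$ emerges directly. Two shared caveats: Lemma \ref{lem: moment int} uses \ref{hypo:Lstrconv}--\ref{hypo:Lbound}, which are not listed in the statement but are in force for solutions of \eqref{eq:MFGCtheta}, exactly as in the paper's own proof; and the justification of the differentiated equation, which you propose to handle by mollification, is handled in the paper by interpreting that equation in the sense of distributions.
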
	

\begin{proof}
    We have that $u$ satisfies
	\[-\partial_t u +(-\Delta)^s u +H^\theta(x,D u,\mu)=0.\]
	For $e\in\bb{R}^d$, $\vert e\vert=1$, taking the directional derivative with respect to $e$ yields
	\[-\partial_t \partial_e u +(-\Delta)^s \partial_e u +D_p H^\theta(x,D u,\mu)\cdot D\partial_e u + \partial_e H^\theta(x,D u, \mu)=0\]
    in the sense of distributions.
	Now set $v_e=\frac{1}{2}(\partial_e u)^2$. Then in the sense of distributions,
	$$-\partial_t v_e +(-\Delta)^s v_e +D_p H^\theta(x,D u,\mu)\cdot D v_e + \partial_e H^\theta(x,D u, \mu)\partial_e u=0$$
	
	Since $\abs{ D_xH^\theta\left(x,p,\mu\right)}
	\leq
	C\theta\del{1+\abs{p}}+C\theta^{1-\tilde q}\Lambda_{\tilde q}(\mu)^{\tilde q}$,
	\[-\partial_t v_e +(-\Delta)^s v_e +D_p H^\theta(x,D u,\mu)\cdot D v_e \leq C\theta\left( 1+\vert D u\vert^2 \right)+C\theta^{1-\tilde q}\Lambda_{\tilde q}(\mu)^{\tilde q}	
	\]
	
	Set $v=\frac{1}{2}\vert D u\vert^2=\frac{1}{2}\left(v_{e_1}+\cdots+v_{e_d}\right)$. Then,
	\[-\partial_t v +(-\Delta)^s v +D_p H^\theta(x,D u,\mu)\cdot D v \leq C\theta\left( 1+ v \right)+C\theta^{1-\tilde q}\Lambda_{\tilde q}(\mu)^{\tilde q}	
	\]	
	
	Let $w(x,t)=e^{-C\theta t}v(x,T-t)-C\theta t-C\theta^{1-\tilde q}\int_0^t\Lambda_{\tilde q}(\mu(s))^{\tilde q}\dif s$. Notice that $w(x,0)=\frac{1}{2}\theta\vert D u_T(x)\vert^2$ and
	\[\partial_t w +(-\Delta)^s w +D_p H^\theta(x,D u(x,T-t),\mu(T-t))\cdot D w \leq 0	
	\]	
	
	By the Maximum Principle for linear fractional equations \cite{garroni1992green}, $w\leq \max \frac{1}{2}\theta\vert D u_T\vert^2 $. Thus the result holds.
\end{proof}

Combining Lemmas \ref{lem: grad u bound} and \ref{lem: fixmu}, we get
\begin{corollary}\label{cor: moment est}
	Suppose $L$ satisfies \ref{hypo:Lstrconv}-\ref{hypo:Lbound}, $H$ satisfies \ref{hypo: Hxbound}, and \ref{hypo:Monogregx} holds. If $(u,m,\mu)$ is solution of \eqref{eq:MFGCtheta}, then
 $$\sup_{t\in[0,T]} \Lambda_{\tilde q} (\mu(t)) \leq C\theta$$
 and
 $$\sup_{t\in[0,T]} \Lambda_\infty (\mu(t)) \leq C\theta$$
 where $C$ is a constant.
\end{corollary}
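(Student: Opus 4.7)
The plan is to combine Lemma \ref{lem: grad u bound}, which gives $\|Du\|_\infty\leq C\theta$, with Lemma \ref{lem: fixmu}, which provides moment bounds on fixed points of the \emph{un-indexed} map $\mu\mapsto(I_d,-D_pH(\cdot,p(\cdot),\mu))\#m$. The catch is that Lemma \ref{lem: fixmu} is stated for the original Hamiltonian $H$ and not for $H^\theta$, so the first task is to re-express the fixed-point equation \eqref{eq:MFGCtheta}(iii) in terms of $H$ by absorbing the $\theta$-scaling into $\mu$.

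Concretely, using $H^\theta(x,p,\mu)=\theta H(x,p,\Theta(\mu))$ and $D_pH^\theta(x,p,\mu)=\theta D_pH(x,p,\Theta(\mu))$, where $\Theta(\mu)=(I_d\times\theta^{-1}I_d)\#\mu$, I would set $\nu(t):=\Theta(\mu(t))$ and verify directly from the push-forward definitions that
$$\nu(t)=(I_d,-D_pH(\cdot,Du(\cdot,t),\nu(t)))\#m(t).$$
Thus $\nu(t)$ lies in the setting of Lemma \ref{lem: fixmu} with $p(\cdot)=Du(\cdot,t)$. A routine change of variables in the push-forward also gives the scaling relations
$$\Lambda_{\tilde q}(\mu(t))=\theta\,\Lambda_{\tilde q}(\nu(t))\quad\text{and}\quad\Lambda_\infty(\mu(t))=\theta\,\Lambda_\infty(\nu(t)).$$

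Next I apply Lemma \ref{lem: fixmu} to $\nu(t)$ to obtain
$$\Lambda_{\tilde q}(\nu(t))^{\tilde q}\leq 4C_0^2+\frac{\tilde q^{q-1}(2C_0)^q}{q}\|Du(\cdot,t)\|_{L^q(m(t))}^q.$$
Since $\|Du(\cdot,t)\|_{L^q(m(t))}\leq \|Du\|_\infty\leq C\theta\leq C$ by Lemma \ref{lem: grad u bound} and the assumption $\theta\in(0,1]$, the right-hand side is uniformly bounded in $\theta$, yielding $\Lambda_{\tilde q}(\nu(t))\leq C$. Multiplying by $\theta$ gives the first estimate. Plugging this bound together with $\|Du\|_\infty\leq C$ into the second inequality of Lemma \ref{lem: fixmu} gives $\Lambda_\infty(\nu(t))\leq C_0(1+C+C)\leq C$, and rescaling by $\theta$ yields the second estimate.

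The only genuinely non-routine step, and thus the main obstacle, is the identification $\nu=\Theta(\mu)$ together with the derivation of the scaling identities for $\Lambda_{\tilde q}$ and $\Lambda_\infty$; this is what allows us to reduce the $\theta$-indexed problem to the hypotheses of Lemma \ref{lem: fixmu}. Once this bookkeeping is in place, the corollary follows by chaining the two lemmas and using $\theta\leq 1$ to absorb $\theta$-dependent constants into $C$.
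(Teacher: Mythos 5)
Your proof is correct and takes essentially the same route as the paper, which simply states the corollary as "combining Lemma \ref{lem: grad u bound} and Lemma \ref{lem: fixmu}." The $\Theta$-rescaling step ($\nu=\Theta(\mu)$ satisfies the unscaled fixed-point relation with $p=Du$, and $\Lambda_{\tilde q}(\mu)=\theta\Lambda_{\tilde q}(\nu)$, $\Lambda_\infty(\mu)=\theta\Lambda_\infty(\nu)$) is exactly the bookkeeping the paper leaves implicit in its indexing-by-$\theta$ section, and you have carried it out correctly.
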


\begin{lemma}\label{lem: H bdd}
        Suppose $L$ satisfies \ref{hypo:Lstrconv}-\ref{hypo:Lbound}, $H$ satisfies \ref{hypo: Hxbound}, and \ref{hypo:Monogregx} holds. If $(u,m,\mu)$ solves \eqref{eq:MFGCtheta},
        $$\enVert{H^\theta(x,Du,\mu)}_\infty\leq C\theta,$$
        and
        $$\enVert{D_pH^\theta(x,Du,\mu)}_\infty\leq C\theta,$$
        where $C$ is a finite constant that depends on $C_0, q, {\tilde q},$ and $\enVert{Du}_\infty$.
\end{lemma}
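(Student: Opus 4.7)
The plan is to evaluate the pointwise growth bounds from Lemma \ref{lem:Hbound}, namely \eqref{eq:Hboundtheta} and \eqref{eq:Hpboundtheta}, at $p=Du(x,t)$, and then substitute the $L^\infty$ control on $Du$ coming from Lemma \ref{lem: grad u bound} together with the moment control on $\mu(t)$ coming from Corollary \ref{cor: moment est}. Because $\theta\in(0,1]$, every power $\theta^a$ with $a\geq 1$ is dominated by $\theta$, which is exactly what converts these inequalities into the desired $C\theta$ bounds.

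Concretely, for the first estimate, \eqref{eq:Hboundtheta} at $p=Du$ reads
$$|H^\theta(x,Du,\mu)|\leq C_0\theta\del{1+|Du|^q}+C_0\theta^{1-\tilde q}\Lambda_{\tilde q}(\mu)^{\tilde q}.$$
Lemma \ref{lem: grad u bound} gives $\enVert{Du}_\infty\leq C\theta$, so $\theta|Du|^q\leq C^q\theta^{q+1}\leq C^q\theta$, while Corollary \ref{cor: moment est} gives $\Lambda_{\tilde q}(\mu(t))\leq C\theta$, so $\theta^{1-\tilde q}\Lambda_{\tilde q}(\mu)^{\tilde q}\leq C^{\tilde q}\theta$. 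Summing produces $\enVert{H^\theta(x,Du,\mu)}_\infty\leq C\theta$ with $C$ depending on $C_0$, $q$, $\tilde q$, and $\enVert{Du}_\infty$.

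For the second estimate, \eqref{eq:Hpboundtheta} at $p=Du$ reads
$$|D_p H^\theta(x,Du,\mu)|\leq C_0\theta\del{1+|Du|^{q-1}}+C_0\Lambda_{\tilde q}(\mu),$$
and the same two substitutions give $\theta|Du|^{q-1}\leq C^{q-1}\theta^q\leq C^{q-1}\theta$ together with $\Lambda_{\tilde q}(\mu)\leq C\theta$, whence $\enVert{D_p H^\theta(x,Du,\mu)}_\infty\leq C\theta$.

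There is no genuine obstacle here; the lemma is really a bookkeeping consequence of the results already established in this subsection. The only point to watch is that the negative power $\theta^{1-\tilde q}$ appearing in the growth bound for $H^\theta$ is compensated exactly by $\Lambda_{\tilde q}(\mu)^{\tilde q}\leq (C\theta)^{\tilde q}$, so that both the gradient term and the moment term in the growth conditions on $H^\theta$ and $D_p H^\theta$ ultimately scale linearly in $\theta$.
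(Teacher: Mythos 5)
Your proposal is correct and follows essentially the same route as the paper: both apply the growth bounds \eqref{eq:Hboundtheta} and \eqref{eq:Hpboundtheta} at $p=Du$, then invoke Corollary \ref{cor: moment est} for $\Lambda_{\tilde q}(\mu)\leq C\theta$ and Lemma \ref{lem: grad u bound} for the gradient bound, with the negative power $\theta^{1-\tilde q}$ absorbed exactly as you describe. The only cosmetic difference is that the paper simply uses $\enVert{Du}_\infty\leq C$ to bound the gradient terms, whereas you track the extra factor $\theta^{q}$; both yield the stated $C\theta$ bound.
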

	\begin{proof}
	    By our assumptions on the Lagrangian $L$, we can apply Lemma \ref{lem:Hbound} to get
        $$\abs{H^\theta(x,Du,\mu)}\leq C_0\theta\del{1+\abs{Du}^q}+C_0\theta^{1-\tilde q}\Lambda_{\tilde q}(\mu)^{\tilde q}.$$
        Now by Corollary \ref{cor: moment est}, $\Lambda_{\tilde q}(\mu)^{\tilde q}\leq C\theta^{\tilde q}.$
        Thus
        $$\enVert{H^\theta(x,Du,\mu)}_\infty\leq C_0\del{1+\enVert{Du}_\infty^q+C}\theta.$$
        Similarly,
        \begin{equation*}
        \abs{D_pH^\theta(x,Du,\mu)}\leq C_0\theta\del{1+\abs{Du}^{q-1}}+C_0\Lambda_{\tilde q}(\mu).
        \end{equation*}
        Again using Corollary \ref{cor: moment est},
        $$\enVert{D_pH^\theta(x,Du,\mu)}_\infty \leq C_0\theta\del{1+\enVert{Du}_\infty^{q-1}+C}.$$
        Both results hold using that $\enVert{Du}_\infty\leq C$ from Lemma \ref{lem: grad u bound}.
	\end{proof}
    Combining Lemmas \ref{lem: grad u bound}, \ref{lem: H bdd}, and \ref{lem: grad holder}, we get
\begin{corollary}
    \label{cor: Du Cbeta}
    Suppose $L$ satisfies \ref{hypo:Lstrconv}-\ref{hypo:Lbound}, $H$ satisfies \ref{hypo: Hxbound}, and \ref{hypo:Monogregx} holds. If $(u,m,\mu)$ solves \eqref{eq:MFGCtheta}, then
    \begin{equation}
        \enVert{Du}_{\s{C}^{\beta,\frac{\beta}{2s}}} \leq C\theta,
    \end{equation}
    where $C > 0$ and $\beta \in (0,1)$ are constants.
\end{corollary}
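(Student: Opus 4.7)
The plan is to freeze the nonlinearity and view $u$ as the solution of the linear fractional parabolic problem
\begin{equation*}
-\partial_t u + (-\Delta)^s u + f(x,t) = 0 \quad \text{in } Q_T, \qquad u(\cdot,T) = \theta u_T,
\end{equation*}
with forcing $f(x,t) := H^\theta(x, Du(x,t), \mu(t))$ treated as a given bounded function, and then to invoke Lemma \ref{lem: grad holder} on this fixed linear equation (after the usual time reversal $t \mapsto T-t$ that brings it into the form of \eqref{eq: fractional parabolic}).

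To apply that lemma, two inputs must be controlled uniformly in $\theta$. First, we need $f \in L^\infty(Q_T)$ with $\enVert{f}_\infty \leq C\theta$: this is precisely the content of Lemma \ref{lem: H bdd}, which in turn rests on the $L^\infty$ bound on $Du$ from Lemma \ref{lem: grad u bound} and the moment estimate of Corollary \ref{cor: moment est}. Second, the terminal datum must belong to $\s{C}^{2s+\varepsilon}(\bb{T}^d)$ for some $\varepsilon > 0$, with norm proportional to $\theta$. Since $s \in (1/2,1)$ we have $2s < 2$, so we may pick $\varepsilon > 0$ small enough that $2s + \varepsilon \leq 2 + \beta_0$; assumption \ref{hypo:Monogregx} then yields $\enVert{\theta u_T}_{\s{C}^{2s+\varepsilon}} \leq \theta C_0$.

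With both inputs verified, Lemma \ref{lem: grad holder} produces an exponent $\beta \in (0,1)$ and a constant $C > 0$, independent of $\theta$, such that
\begin{equation*}
\enVert{Du}_{\s{C}^{\beta, \beta/(2s)}(Q_T)} \leq C\bigl(\enVert{f}_\infty + \enVert{\theta u_T}_{\s{C}^{2s+\varepsilon}}\bigr) \leq C\theta,
\end{equation*}
which is the announced estimate. I do not expect a real obstacle here: the whole point of the bootstrap layering in Section \ref{sub: HJ est} is that by the time one reaches this corollary, uniform-in-$\theta$ bounds on every ingredient that Lemma \ref{lem: grad holder} consumes are already in hand. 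The only mild subtlety worth noting is that $f$ is defined implicitly through the unknown $u$ itself, but since the $L^\infty$ bound on $f$ is derived from structural constants independent of the fixed-point iteration, this self-reference is harmless and the linear estimate closes as stated.
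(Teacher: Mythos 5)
Your proposal is correct and is exactly the paper's argument: the paper obtains this corollary by combining Lemma \ref{lem: grad u bound}, Lemma \ref{lem: H bdd}, and Lemma \ref{lem: grad holder}, i.e.\ by treating $H^\theta(x,Du,\mu)$ as a bounded forcing term (of size $C\theta$) and the terminal datum $\theta u_T\in\s{C}^{2s+\varepsilon}$ (using $2s<2+\beta_0$ via \ref{hypo:Monogregx}) in the linear fractional parabolic estimate. You simply spell out the details the paper leaves implicit, including the harmless self-reference through $f$.
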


We now prove the crucial time regularity of $\mu(t)$.    

\begin{proposition}\label{prop:mu holder}
    Suppose $L$ satisfies \ref{hypo:Lstrconv}-\ref{hypo:Lbound}, $s>\frac{1}{2}$, $H$ satisfies \ref{DpmuH}-\ref{DxxH}, and \ref{hypo:Monogregx} holds.
    If $(u,m,\mu)$ is a solution to \eqref{eq:MFGCtheta}, $\enVert{\mu}_{ \s{C}^\frac{\beta}{2}\del{[0,T],\mathcal{P}(\bb{T}^d\times\bb{R}^d)}}\leq C(1+\theta)$ for some finite constants $C>0$ and $\beta \in (0,1)$ that do not depend on $\theta$.
\end{proposition}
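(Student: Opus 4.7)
The plan is to construct a probabilistic coupling of $\mu(t)$ and $\mu(\tau)$ from the natural state-control pair attached to the MFGC system, and then to bound each resulting piece using the regularity already in place. Following the construction used in the proof of Lemma \ref{lem: moment int}, let $J$ be the $2s$-stable L\'evy process generating $(-\Delta)^s$ and define $(X_t,\alpha_t)$ by $\dif X_t=\alpha_t\dif t+\dif J_t$, $X_0\sim m_0$, with feedback $\alpha_t=-D_pH^\theta(X_t,Du(X_t,t),\mu(t))$, so that $(X_t,\alpha_t)$ has law $\mu(t)$ for every $t$. Then $((X_t,\alpha_t),(X_\tau,\alpha_\tau))$ is a coupling of $\mu(t)$ and $\mu(\tau)$, yielding the basic bound $W_1(\mu(t),\mu(\tau))\leq \bb{E}|X_t-X_\tau|+\bb{E}|\alpha_t-\alpha_\tau|$.

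The first term is the easier one. Writing $X_t-X_\tau=\int_\tau^t \alpha_s\dif s+(J_t-J_\tau)$, Corollary \ref{cor: moment est} gives $|\alpha_s|\leq \Lambda_\infty(\mu(s))\leq C\theta$, so the drift contributes at most $C\theta(t-\tau)$. The jump part is handled by the $2s$-stable scaling $J_{t-\tau}\stackrel{d}{=}(t-\tau)^{1/(2s)}J_1$; here the hypothesis $s>\tfrac{1}{2}$ is crucial because it ensures $\bb{E}|J_1|<\infty$ and hence $\bb{E}|J_t-J_\tau|\leq C(t-\tau)^{1/(2s)}$. Combining these gives $\bb{E}|X_t-X_\tau|\leq C(1+\theta)(t-\tau)^{1/(2s)}$ for all $0\leq \tau\leq t\leq T$.

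The second term is where the work lies. I would decompose $\alpha_t-\alpha_\tau$ by inserting the intermediate values $-D_pH^\theta(X_\tau,Du(X_t,t),\mu(t))$ and $-D_pH^\theta(X_\tau,Du(X_\tau,\tau),\mu(t))$, producing three differences corresponding to changes in the $x$, $p$, and $\mu$ slots of $D_pH^\theta$. The $x$-piece is bounded by $C\theta|X_t-X_\tau|$ via assumption \ref{DxxH}; the $p$-piece is bounded by $C|Du(X_t,t)-Du(X_\tau,\tau)|$ using local Lipschitz dependence of $D_pH$ on $p$ (a consequence of the strict convexity \ref{hypo:Lstrconv} and regularity \ref{hypo:Lreg} of $L$ in $\alpha$, applied on the compact range of $Du$ dictated by $\enVert{Du}_\infty\leq C\theta$ from Lemma \ref{lem: grad u bound}), and then Corollary \ref{cor: Du Cbeta} gives the further bound $C(|X_t-X_\tau|^\beta+|t-\tau|^{\beta/(2s)})$; the $\mu$-piece is handled directly by \ref{DpmuH}, yielding $C_R'\theta W_1(\mu(t),\mu(\tau))$. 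Taking expectations, using Jensen's inequality to bound $\bb{E}|X_t-X_\tau|^\beta\leq (\bb{E}|X_t-X_\tau|)^\beta$, and combining with the previous paragraph, I obtain $\bb{E}|\alpha_t-\alpha_\tau|\leq C(1+\theta)(t-\tau)^{\beta/(2s)}+C_R'\theta W_1(\mu(t),\mu(\tau))$.

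Summing the two estimates yields $W_1(\mu(t),\mu(\tau))\leq C(1+\theta)(t-\tau)^{\beta/(2s)}+C_R'\theta W_1(\mu(t),\mu(\tau))$; since $C_R'\in(0,1)$ and $\theta\in(0,1]$ we have $1-C_R'\theta\geq 1-C_R'>0$, so the last term absorbs to the left and divides out, leaving $W_1(\mu(t),\mu(\tau))\leq C(1+\theta)(t-\tau)^{\beta/(2s)}$. Because $s<1$ implies $\beta/(2s)>\beta/2$, this is already stronger than the claimed $\beta/2$-H\"older bound. The principal technical obstacle is verifying the local Lipschitz dependence of $D_pH$ on $p$ uniformly over the bounded range of $Du$ used above: it is not an explicit hypothesis, but it follows from $D^2_{\alpha\alpha}L>0$ combined with the continuity of the derivatives of $L$ in \ref{hypo:Lreg} and the a priori $L^\infty$-bound on $Du$. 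The rest of the argument is essentially bookkeeping in a standard stochastic-coupling framework.
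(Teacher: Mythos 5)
Your construction is essentially the paper's own (Lemma \ref{lem:holder paths of measures}): couple $\mu(t)$ and $\mu(\tau)$ through the controlled jump SDE, split the increment of the feedback control into $x$-, $p$-, and $\mu$-contributions, and absorb the $\mu$-contribution using $C_R'<1$. The genuine gap is in that absorption step. Hypothesis \ref{DpmuH} gives $\abs{D_pH^\theta(x,p,\mu)-D_pH^\theta(x,p,\nu)}\leq C_R'\theta\, W_r(\mu,\nu)$ for some \emph{fixed} $r\geq 1$ determined by the hypothesis, not $W_1$. Since $W_r\geq W_1$, the inequality you actually obtain is $W_1(\mu(t),\mu(\tau))\leq C(1+\theta)\abs{t-\tau}^{\beta/(2s)}+C_R'\theta\, W_r(\mu(t),\mu(\tau))$, which does not close: a $W_r$ term cannot be absorbed into a $W_1$ left-hand side unless $r=1$, which the assumptions do not guarantee. (Moreover, the downstream use of this proposition in Corollary \ref{cor: H holder} needs the H\"older bound in the same metric that appears in \ref{DpmuH}, so proving it only for $W_1$ would be insufficient without a further upgrade.) The paper sidesteps this by estimating the single quantity $\del{\bb{E}\abs{\bar X(t_1)-\bar X(t_0)}^r}^{1/r}$, which dominates $W_r(\mu(t_1),\mu(t_0))$, so the $\mu$-term is absorbed into the very quantity being estimated.

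The repair is to run your argument at the level of $r$-th moments, but then your jump estimate needs adjustment: the scaling identity $J_{t-\tau}\overset{d}{=}(t-\tau)^{1/(2s)}J_1$ only controls moments of order strictly less than $2s$, so for $r\geq 2s$ the bound $\bb{E}\abs{J_t-J_\tau}^r\leq C\abs{t-\tau}^{r/(2s)}$ fails on $\bb{R}^d$ (that moment is infinite). You must either exploit boundedness (the state lives on $\bb{T}^d$ and $\abs{\alpha}\leq\Lambda_\infty(\mu)\leq C\theta$ by Corollary \ref{cor: moment est}, so higher moments interpolate against the first moment), or use the estimate the paper cites from Garroni--Menaldi, which yields $\del{\bb{E}\abs{X(t_1)-X(t_0)}^r}^{1/r}\leq \enVert{b}_\infty\abs{t_1-t_0}+C\abs{t_1-t_0}^{1/2}$ for any $r$. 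With either repair, the remainder of your argument — the $x$-piece via \ref{DxxH}, the $p$-piece via local Lipschitzness of $D_pH$ in $p$ (which, as you correctly flag, is not an explicit hypothesis but is also used implicitly by the paper, and does follow from \ref{hypo:Lstrconv}--\ref{hypo:Lreg} on the bounded range supplied by Lemma \ref{lem: grad u bound}), and Corollary \ref{cor: Du Cbeta} for the $Du$-increment — goes through and gives a H\"older bound in $W_r$ with some positive exponent, which yields the proposition after renaming $\beta$.
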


Proposition \ref{prop:mu holder} will follow from a more abstract lemma:
\begin{lemma}
    \label{lem:holder paths of measures}
    Let $b = b(x,t)$ be a bounded vector field, and let $m$ be the weak solution of
    \begin{equation}
        \partial_tm+(-\Delta)^sm+\Div(bm)=0.
    \end{equation}
    Then $m:[0,T] \to \s{P}_r(\bb{R}^d)$ is H\"older continuous with
    \begin{equation}\label{eq: WPm}
        W_r(m(t_1),m(t_0))\leq \enVert{b}_\infty|t_1-t_0|+C|t_1-t_0|^{1/2}.
    \end{equation}
    Now let $\mu(t)$ be the unique solution of
    \begin{equation}
        \mu(t) = (I,-D_p H^\theta(\cdot,Du(\cdot,t),\mu(t))_\sharp m(t)
    \end{equation}
    where $Du \in \s{C}^{\beta,\frac{\beta}{2s}}$.
    Then $\mu:[0,T] \to \s{P}_r(\bb{R}^d \times \bb{R}^d)$ is H\"older continuous with an estimate
    \begin{equation} \label{eq:mu holder}
        \enVert{\mu}_{\s{C}^\frac{\beta}{2}([0,T],\mathcal{P}(\bb{T}^d\times\bb{R}^d))}\leq C\del{1+\theta +\theta\enVert{Du}_{\s{C}^{\beta,\frac{\beta}{2s}}(Q_T)}}.
    \end{equation}
\end{lemma}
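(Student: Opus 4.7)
The plan is to prove both estimates by a probabilistic coupling argument based on the stochastic representation of $m$.

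\textbf{Part 1, estimate \eqref{eq: WPm}.} I would represent $m$ as the law of the Markov process $X_t$ solving $\dif X_t = b(X_t,t)\dif t + \dif J_t$ with $X_0\sim m_0$, where $J_t$ is the $2s$-stable pure-jump L\'evy process whose generator is $-(-\Delta)^s$ (as recalled just before Lemma \ref{lem: sgbounds}). Since $(X_{t_0},X_{t_1})$ is an admissible coupling of $m(t_0)$ and $m(t_1)$, writing $X_{t_1}-X_{t_0}=\int_{t_0}^{t_1}b(X_s,s)\dif s+(J_{t_1}-J_{t_0})$ and applying Minkowski's inequality in $L^r(\Omega)$ gives
\begin{equation*}
    W_r(m(t_0),m(t_1)) \le \enVert{X_{t_1}-X_{t_0}}_{L^r(\Omega)} \le \enVert{b}_\infty|t_1-t_0|+\enVert{J_{t_1}-J_{t_0}}_{L^r(\Omega)}.
\end{equation*}
By self-similarity, $J_{t_1}-J_{t_0}\stackrel{d}{=}|t_1-t_0|^{1/(2s)}J_1$, so the last term equals $C|t_1-t_0|^{1/(2s)}$, with $C=\enVert{J_1}_{L^r}<\infty$ precisely because $s>\tfrac12$ permits the choice $r\in[1,2s)$. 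Since $s\in(\tfrac12,1)$ we have $1/(2s)\ge 1/2$, so on $[0,T]$ the exponent can be downgraded to $1/2$ at the cost of a constant $C(T)$, yielding \eqref{eq: WPm}.

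\textbf{Part 2, estimate \eqref{eq:mu holder}.} Setting $\alpha_t=-D_pH^\theta(X_t,Du(X_t,t),\mu(t))$ so that $(X_t,\alpha_t)\sim\mu(t)$, we obtain the natural coupling
\begin{equation*}
    W_r(\mu(t_0),\mu(t_1))^r\le C\del{\bb E\abs{X_{t_1}-X_{t_0}}^r+\bb E\abs{\alpha_{t_1}-\alpha_{t_0}}^r}.
\end{equation*}
Inserting two intermediate points splits $|\alpha_{t_1}-\alpha_{t_0}|$ into three pieces corresponding to varying the $x$-, $p$- and $\mu$-arguments of $D_pH^\theta$ one at a time. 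Assumption \ref{DxxH} bounds the first piece by $C\theta\abs{X_{t_1}-X_{t_0}}$; the boundedness of $D^2_{pp}H^\theta=\theta D^2_{pp}H$ on the set where $Du$ takes values (bounded by Lemma \ref{lem: grad u bound}) bounds the second by $C\theta[Du]_{\s{C}^{\beta,\beta/(2s)}}(\abs{X_{t_1}-X_{t_0}}^\beta+|t_1-t_0|^{\beta/(2s)})$; and assumption \ref{DpmuH} bounds the third by $C'_R\theta\, W_r(\mu(t_0),\mu(t_1))$. Raising to the $r$-th power, taking expectations, invoking Part 1 with $\enVert{b}_\infty=\enVert{D_pH^\theta}_\infty\le C\theta$ (Lemma \ref{lem: H bdd}), and using Jensen's inequality for $\bb E|X_{t_1}-X_{t_0}|^{r\beta}$, one obtains a self-referential inequality for $W_r(\mu(t_0),\mu(t_1))^r$ which can be absorbed to the left since $(C'_R\theta)^r<1$. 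Taking $r$-th roots and using $|t_1-t_0|^a\le C(T)|t_1-t_0|^{\beta/2}$ for both $a=1/(2s)$ and $a=\beta/(2s)$ (valid since $s,\beta\in(0,1)$) yields \eqref{eq:mu holder}.

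\textbf{Main obstacle.} The principal difficulty is the self-referential dependence of $\alpha_t$ on $\mu(t)$: the H\"older seminorm we wish to bound reappears on the right via the third piece generated by \ref{DpmuH}. The absorption step is possible \emph{only} because of the strict inequality $C'_R<1$ built into \ref{DpmuH}; this same contraction condition is what guarantees well-posedness of the fixed point defining $\mu$ in Lemma \ref{lem: fixmu}. A secondary technical point is reconciling the three distinct time exponents $|t_1-t_0|$, $|t_1-t_0|^{1/(2s)}$, and $|t_1-t_0|^{\beta/(2s)}$ and reducing them all to the single exponent $\beta/2$ at the end, which is why the restriction $s<1$ together with $\beta<1$ is used.
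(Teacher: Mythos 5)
Your Part 2 is essentially the paper's argument: the same coupling $\bar X(t)=(X(t),\alpha(t))$, the same three-way splitting of the increment of $D_pH^\theta$ into its $x$-, $p$- and $\mu$-contributions (using \ref{DxxH}, the H\"older bound on $Du$, and \ref{DpmuH} respectively), and the same absorption of the $W_r(\mu(t_1),\mu(t_0))$ term to the left-hand side thanks to $C_R'\theta<1$; working with $r$-th powers and Jensen rather than with $L^r$-norms and Minkowski is an immaterial variation. The problem is in Part 1, in how you estimate the jump increment. You bound $\enVert{J_{t_1}-J_{t_0}}_{L^r(\Omega)}$ by self-similarity of the genuine $2s$-stable process on $\bb{R}^d$, which requires $\bb{E}\abs{J_1}^r<\infty$, i.e. $r<2s$. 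But $r$ is not a free parameter you may ``choose'' in $[1,2s)$: it is the exponent fixed once and for all in Assumption \ref{DpmuH}, the same $r$ that appears in the Wasserstein metric of the conclusion \eqref{eq:mu holder} and that is reused downstream (Corollary \ref{cor: H holder}, the continuity argument in the existence proof). Since $2s<2$, even the natural case $r=2$ is excluded, and for any $r\geq 2s$ your constant $C=\enVert{J_1}_{L^r}$ is infinite and estimate \eqref{eq: WPm} is not obtained. Truncating the jumps to rescue finiteness destroys the self-similarity you invoke, and a tail-based estimate of the truncated increment only produces the exponent $\min\del{\tfrac{1}{2s},\tfrac1r}$, which is strictly worse than the claimed $\tfrac12$ once $r>2$.

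The paper avoids this issue by never appealing to scaling: it writes the increment as a compensated Poisson stochastic integral with jump variable restricted to the (bounded) torus and applies the moment inequality (4.49) of \cite{garroni1992green}, which bounds the $L^r$-norm of the integral by $\del{\abs{t_1-t_0}\int_{\bb{T}^d}\abs{z}^{2-2s-d}\dif z}^{1/2}$; the integral is finite precisely because $s<1$, and the bound holds for the given $r$, yielding the exponent $\tfrac12$ in \eqref{eq: WPm} directly. To repair your proof you should either replace the self-similarity step by such a BDG-type estimate for the compensated jump integral with torus-bounded jumps, or explicitly restrict the lemma to $r<2s$ and verify that this restriction is compatible with the $r$ demanded by \ref{DpmuH} and by the later applications — as written, that compatibility is not available to you.
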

\begin{proof}
    Let $X(t)$ be the solution of the stochastic differential equation
    \begin{equation}
        X(t)-X(0)=\int_0^tb(X(\tau),\tau)\dif\tau+\int_0^t\int_{\bb{T}^d}j(X(\tau),\tau,z)\lambda(\dif z,\dif t),
    \end{equation}
    where $b(x,t) := -D_p H^\theta(x,Du(x,t),\mu(t))$, the jumps of the L\'evy process are given by $j(X(\tau),\tau,z)=z$, and $\lambda(t,\cdot)=\rho(t,\cdot)-t\nu(\cdot)$ where $\rho(t,\cdot)$ is a Poisson measure with L\'evy measure $\nu(\cdot)$ given by
    $$\nu(dz)=C(d,s)\frac{\dif z}{|z|^{d+2s}}$$
    Such an $X(t)$ is distributed according to $m(t)$.
    Then for some $q\geq 1$
    \begin{equation}
    \del{\bb{E}\abs{X(t_1)-X(t_0)}^r}^{1/r} 
        \leq \enVert{b}_{\infty}\abs{t_1-t_0}+\del{\bb{E}\abs{\int_{t_0}^{t_1}\int_{\bb{T}^d}j(X(t),t,z)\lambda(\dif z,\dif t)}^r}^{1/r}.
    \end{equation}
    Now from our above definition of the fractional laplacian along with estimates (4.49) of \cite{garroni1992green} 
    \begin{equation}
    \begin{split}
        \del{\bb{E}\abs{\int_{t_0}^{t_1}\int_{\bb{T}^d}j(X(t),t,z)\lambda(\dif z,\dif t)}^r}^{1/r}&\leq C_q\del{\bb{E}\abs{\int_{t_0}^{t_1}\dif t\int_{\bb{T}^d}|z|^{2-2s-d}\dif z}^{q/2}}^{1/q}\\
        &\leq C\abs{t_1-t_0}^{1/2}.
    \end{split}    
    \end{equation}
    Thus we have
    \begin{equation}
        W_r(m(t_1),m(t_0))\leq \enVert{b}_\infty|t_1-t_0|+C|t_1-t_0|^{1/2}.
    \end{equation}
    Now let $\bar{X}(t)=\del{X(t),-D_pH^\theta(X(t),Du(X(t),t),\mu(t))}$, so that $\bar{X}(t)\sim \mu(t)$. Then using Hypothesis \ref{DxxH} and Equation \eqref{eq:Hpbound} together with Lemma \ref{lem: grad u bound} and Corollary \ref{cor: moment est}, we get
    \begin{equation}\label{eq: Wqmu}
    \begin{split}
            \del{\bb{E}\abs{\bar{X}(t_1)-\bar{X}(t_0)}^r}^{1/r}\\
            &\hspace{-1in}\leq \del{\bb{E}\abs{{X}(t_1)-{X}(t_0)}^r}^{1/r}\\
            &\hspace{-.9in} +\del{\bb{E}\abs{D_pH^\theta(X(t_1),Du(X(t_1),t_1),\mu(t_1))-D_pH^\theta(X(t_0),Du(X(t_0),t_0),\mu(t_0))}^r}^{1/r}\\
            &\hspace{-1in}\leq \del{\bb{E}\abs{{X}(t_1)-{X}(t_0)}^r}^{1/r}+C\theta\del{\bb{E}\abs{{X}(t_1)-{X}(t_0)}^r}^{1/r}\\
            &\hspace{-.9in} +C\theta\del{\bb{E}\abs{Du(X(t_1),t_1)-Du(X(t_0),t_0)}^r}^{1/r}\\
            &\hspace{-.9in} +\del{\bb{E}\abs{D_pH^\theta(X(t_0),Du(X(t_0),t_0),\mu(t_1))-D_pH^\theta(X(t_0),Du(X(t_0),t_0),\mu(t_0))}^r}^{1/r}
    \end{split}        
    \end{equation}
    If $Du\in \s{C}^{\beta,\frac{\beta}{2s}}(Q_T)$, we may write 
    \begin{equation}\label{eq: holderDu}
        \del{\bb{E}\abs{Du(X(t_1),t_1)-Du(X(t_0),t_0)}^r}^{1/r}\leq \enVert{Du}_{\s{C}^{\beta,\frac{\beta}{2s}}(Q_T)}\del{|t_1-t_0|^\frac{\beta}{2s}+\del{\bb{E}\abs{X(t_1)-X(t_0)}^{\beta r}}^{1/r}}.
    \end{equation}
    Now by Assumption \ref{DpmuH} and using \eqref{eq: holderDu} and \eqref{eq: WPm} in \eqref{eq: Wqmu} yields
    \begin{multline}
        (1-C_R\theta)\del{\bb{E}\abs{\bar{X}(t_1)-\bar{X}(t_0)}^r}^{1/r}\leq C(1+C\theta)(\enVert{b}_\infty|t_1-t_0|+|t_1-t_0|^\frac{1}{2})\\
        +C\theta\enVert{Du}_{\s{C}^{\beta,\frac{\beta}{2s}}(Q_T)}\del{\abs{t_1-t_0}^\frac{\beta}{2s}+\del{\enVert{b}_\infty|t_1-t_0|+|t_1-t_0|^\frac{1}{2}}^\beta}.
    \end{multline}
    Since $W_r(\mu(t_1),\mu(t_0)) \leq \del{\bb{E}\abs{\bar{X}(t_1)-\bar{X}(t_0)}^r}^{1/r}$, we now have the estimate \eqref{eq:mu holder}.

\end{proof}

\begin{proof}[Proof of Proposition \ref{prop:mu holder}]
    Applying Corollary \ref{cor: Du Cbeta} in \eqref{eq:mu holder}, we get the estimate\\
    $\enVert{\mu}_{\s{C}^\frac{\beta}{2}([0,T],\mathcal{P}(\bb{T}^d\times\bb{R}^d))}\leq C(1+\theta)$. 
\end{proof}

\begin{corollary}\label{cor: H holder}
    Let $s>\frac{1}{2}$. Suppose $(u,m,\mu)$ is a solution to \eqref{eq:MFGCtheta} for $\theta\in[0,1]$, then if $L$ satisfies \ref{hypo:Lstrconv}-\ref{hypo:Lbound}, $H$ satisfies \ref{DpmuH}-\ref{DxxH}, and \ref{hypo:Monogregx} holds, then 
    $$\enVert{H^\theta(x,Du,\mu)}_{\s{C}^{\beta,\frac{\beta}{2}}(Q_T)}\leq C\theta,$$
    and
    $$\enVert{D_pH^\theta(x,Du,\mu)}_{\s{C}^{\beta,\frac{\beta}{2}}(Q_T)}\leq C\theta.$$
\end{corollary}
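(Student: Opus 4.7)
The plan is a chain-rule style estimate for the composition $(x,t)\mapsto H^\theta(x,Du(x,t),\mu(t))$ (and analogously for $D_pH^\theta$), combining three ingredients already established: (a) the uniform pointwise bounds $\enVert{Du}_\infty\leq C\theta$ from Lemma \ref{lem: grad u bound} and $\Lambda_\infty(\mu(t))\leq C\theta$ from Corollary \ref{cor: moment est}, which confine the second and third arguments of $H^\theta$ to a fixed bounded set independent of $\theta\in[0,1]$; (b) the regularity $\enVert{Du}_{\s{C}^{\beta,\beta/(2s)}(Q_T)}\leq C\theta$ from Corollary \ref{cor: Du Cbeta}; (c) the time regularity $\enVert{\mu}_{\s{C}^{\beta/2}([0,T],\s{P})}\leq C(1+\theta)$ from Proposition \ref{prop:mu holder}. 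Note that $s\in(1/2,1)$ gives $\beta/(2s)\geq\beta/2$, so the time-Hölder exponent of $Du$ is at least as good as the one we need for the output.

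For spatial regularity at fixed $t$, I would split
\begin{align*}
    &\abs{H^\theta(x_1,Du(x_1,t),\mu(t))-H^\theta(x_2,Du(x_2,t),\mu(t))}\\
    &\qquad\leq \abs{H^\theta(x_1,Du(x_1,t),\mu(t))-H^\theta(x_2,Du(x_1,t),\mu(t))}\\
    &\qquad\quad+\abs{H^\theta(x_2,Du(x_1,t),\mu(t))-H^\theta(x_2,Du(x_2,t),\mu(t))}.
\end{align*}
The first summand is bounded by $C\theta\abs{x_1-x_2}$ using \ref{hypo: Hxboundtheta} together with the pointwise bounds from step (a). The second summand is controlled by the mean value theorem in $p$, combined with $\enVert{D_pH^\theta}_\infty\leq C\theta$ from Lemma \ref{lem: H bdd} and $\enVert{Du}_{\s{C}^\beta_x}\leq C\theta$ from Corollary \ref{cor: Du Cbeta}, giving $C\theta^2\abs{x_1-x_2}^\beta$. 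On the bounded set $Q_T$ this collapses to $C\theta\abs{x_1-x_2}^\beta$.

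For temporal regularity at fixed $x$, the analogous triangle split
\begin{align*}
    &\abs{H^\theta(x,Du(x,t_1),\mu(t_1))-H^\theta(x,Du(x,t_2),\mu(t_2))}\\
    &\qquad\leq \abs{H^\theta(x,Du(x,t_1),\mu(t_1))-H^\theta(x,Du(x,t_2),\mu(t_1))}\\
    &\qquad\quad+\abs{H^\theta(x,Du(x,t_2),\mu(t_1))-H^\theta(x,Du(x,t_2),\mu(t_2))}
\end{align*}
has its first term dominated via $\enVert{D_pH^\theta}_\infty\leq C\theta$ and the time-Hölder seminorm of $Du$, yielding $C\theta^2\abs{t_1-t_2}^{\beta/(2s)}\leq C\theta^2 T^{\beta/(2s)-\beta/2}\abs{t_1-t_2}^{\beta/2}$ since $\beta/(2s)\geq\beta/2$. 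The second term is handled by the Lipschitz-in-measure estimate \ref{DpmuHtheta} (with $r$ chosen so that the Wasserstein distance on $\bb{T}^d\times\bb{R}^d$ is controlled on our compactly supported measures) together with Proposition \ref{prop:mu holder}, producing $C\theta(1+\theta)\abs{t_1-t_2}^{\beta/2}$. Summed, this is $C\theta\abs{t_1-t_2}^{\beta/2}$ on $[0,T]$.

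The estimate for $D_pH^\theta$ follows by the same decomposition, where $D_xH^\theta$ is replaced by $D^2_{px}H^\theta$ (bounded by $C\theta$ via \ref{DxxHtheta}), $D_pH^\theta$ is replaced by $D^2_{pp}H^\theta$ (locally bounded via the duality $D^2_{pp}H^\theta=(D^2_{\alpha\alpha}L^\theta)^{-1}$, which on the bounded $(p,\mu)$-set of interest follows from \ref{hypo:Lstrconv} and the $\s{C}^2$ regularity afforded by \ref{hypo:Lreg} together with \ref{hypo:Lstrconv}), and the second inequality of \ref{DpmuHtheta} replaces the first. The main technical obstacle is this last ingredient, namely the $p$-Lipschitz bound on $D_pH^\theta$ on bounded sets: it is not stated as a hypothesis but is forced by strict convexity and smoothness of $L^\theta$, and making its $\theta$-scaling explicit (so that we still get a factor of $\theta$ on the right-hand side, not just a bounded constant) is the one place where care is needed. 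Once this is in hand, the $\s{C}^{\beta,\beta/2}$ bound with constant $C\theta$ for $D_pH^\theta$ is immediate, taking $\beta$ to be the minimum of the various exponents that appear.
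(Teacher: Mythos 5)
Your proposal is correct and follows essentially the same route as the paper's proof: a mean-value/triangle-inequality decomposition in the $x$, $p$ and $\mu$ arguments separately, combining Lemma \ref{lem: grad u bound}, Corollary \ref{cor: moment est}, Corollary \ref{cor: Du Cbeta}, Lemma \ref{lem: H bdd}, Proposition \ref{prop:mu holder} with Assumptions \ref{hypo: Hxbound}, \ref{DpmuH}, \ref{DxxH}, and the observation $\beta/(2s)>\beta/2$. The one subtlety you flag, the local $O(\theta)$ bound on $D^2_{pp}H^\theta$, is used implicitly in the paper's proof as well and is immediate from the scaling $D^2_{pp}H^\theta(x,p,\mu)=\theta D^2_{pp}H(x,p,\Theta(\mu))$ together with local boundedness of $D^2_{pp}H$ via convex duality, so it does not constitute a gap.
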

\begin{proof}
    Taking $x,y\in\bb{T}^d$ and applying Assumption \ref{hypo: Hxbound} as well as the results of Lemma \ref{lem:Hbound}
    \begin{equation}
        \begin{split}
            &\abs{H^\theta(y,Du(y,t),\mu(t))-H^\theta(x,Du(x,t),\mu(t))}\\
            &\leq \int_0^1 \abs{D_{x}H^\theta(x_\lambda,Du,\mu)|y-x|+D_{p}H^\theta(x,Du_\lambda,\mu)|Du(y,t)-Du(x,t)|}\dif\lambda\\
            &\leq \del{C\theta(1+\abs{Du})+C\theta^{1-\tilde q}\Lambda_{\tilde q}(\mu)^{\tilde q}}\abs{y-x}+\int_0^1\del{C_0\theta(1+|Du_\lambda|^{q-1})+C_0\Lambda_{\tilde q}(\mu)}|Du(y,t)-Du(x,t)|\dif \lambda\\
            &\leq C\theta|y-x|+C\theta[Du(\cdot,t)]_{\s{C}^\beta(\bb{T}^d)}|y-x|^\beta
        \end{split}
    \end{equation}
    Similarly, taking $\tau,t\in[0,T]$ we get
    \begin{equation}
    \begin{split}
         &\abs{H^\theta(x,Du(x,\tau),\mu(\tau))-H^\theta(x,Du(x,t),\mu(t))}       \\
         &\leq \int_0^1D_{p}H^\theta(x,\lambda Du(\tau)+(1-\lambda)Du(t),\mu(\tau))|Du(x,\tau)-Du(x,t)|\dif\lambda+C_R\theta W_q(\mu(\tau),\mu(t))\\
         &\leq C\theta\del{\sbr{Du(x,\cdot)}_{\s{C}^\frac{\beta}{2s}([0,T])}|\tau-t|^{\beta/2s}+\enVert{\mu}_{\s{C}^\frac{\beta}{2}([0,T],\mathcal{P}(\bb{T}^d\times\bb{R}^d)}|\tau-t|^{\beta/2}},
    \end{split}        
    \end{equation}
    So $$\enVert{H^\theta(x,Du,\mu)}_{\s{C}^{\beta,\frac{\beta}{2}}(Q_T)}\leq C\theta.$$

    Now taking $x,y\in\bb{T}^d$, we can see by Assumption \ref{DxxH}
    \begin{equation}
    \begin{split}
         &\abs{D_pH^\theta(y,Du(y,t),\mu(t))-D_pH^\theta(x,Du(x,t),\mu(t))}       \\
         &\leq \int_0^1 \abs{D^2_{px}H^\theta(x_\lambda,Du,\mu)|y-x|+D^2_{pp}H^\theta(x,Du_\lambda,\mu)|Du(y,t)-Du(x,t)|}\dif\lambda\\
         &\leq C\theta|y-x|+C\theta\enVert{Du(\cdot,t)}_{\s{C}^\beta(\bb{T}^d)}|y-x|^\beta\\
         &\leq C\theta\del{1+\enVert{Du(\cdot,t)}_{\s{C}^\beta(\bb{T}^d)}}|y-x|^\beta.
    \end{split}
    \end{equation}
    So $\enVert{D_pH^\theta(\cdot,Du(\cdot,t),\mu(t))}_{\s{C}^\beta(\bb{T}^d)}\leq C\theta\del{1+\enVert{Du(\cdot,t)}_{\s{C}^\beta(\bb{T}^d)}}.$ Similarly, taking $\tau,t\in[0,T]$ we get
    \begin{equation}
    \begin{split}
         &\abs{D_pH^\theta(x,Du(x,\tau),\mu(\tau))-D_pH^\theta(x,Du(x,t),\mu(t))}       \\
         &\leq \int_0^1D^2_{pp}H^\theta(x,\lambda Du(\tau)+(1-\lambda)Du(t),\mu(\tau))|Du(x,\tau)-Du(x,t)|\dif\lambda+\theta W_q(\mu(\tau),\mu(t))\\
         &\leq C\theta\enVert{Du(x,\cdot)}_{\s{C}^\frac{\beta}{2s}([0,T])}|\tau-t|^{\beta/2s}+\theta\enVert{\mu}_{\s{C}^\frac{\beta}{2}([0,T],\mathcal{P}(\bb{T}^d\times\bb{R}^d)}|\tau-t|^{\beta/2},
    \end{split}        
    \end{equation}
    so that $\enVert{D_pH^\theta(x,Du(x,\cdot),\mu(\cdot))}_{\s{C}^\frac{\beta}{2}([0,T])}\leq C\theta\del{\enVert{Du(x,\cdot)}_{\s{C}^\frac{\beta}{2s}([0,T])}+\enVert{\mu}_{\s{C}^\frac{\beta}{2}([0,T],\mathcal{P}(\bb{T}^d\times\bb{R}^d)}}.$
    Now
    $$\enVert{D_pH^\theta(x,Du,\mu)}_{\s{C}^{\beta,\frac{\beta}{2}}(Q_T)}\leq C\theta\del{1+\enVert{Du}_{\s{C}^{\beta,\frac{\beta}{2}}(Q_T)}+\enVert{\mu}_{\s{C}^\frac{\beta}{2}([0,T],\mathcal{P}(\bb{T}^d\times\bb{R}^d)}}.$$
    Using Proposition \ref{prop:mu holder} and Lemmas \ref{lem: grad holder} and \ref{lem: H bdd} the result holds.
\end{proof}

\begin{corollary}\label{cor: u holder}
    Let $\beta\in(0,1)$ and $s>\frac{1}{2}$ so that $2s+\beta$ is not an integer. Suppose $(u,m,\mu)$ is a solution to \eqref{eq:MFGCtheta}. If $L$ satisfies \ref{hypo:Lstrconv}-\ref{hypo:Lbound}, $H$ satisfies \ref{DpmuH}-\ref{DxxH}, and \ref{hypo:Monogregx} holds, then
    $$\enVert{(-\Delta)^su}_{\s{C}^{\beta,\frac{\beta}{2}}(Q_T)}+\enVert{\partial_tu}_{\s{C}^{\beta,\frac{\beta}{2}}(Q_T)}\leq C\del{\theta\enVert{u_T}_{\s{C}^{2s+\beta}(\bb{T}^d)}+\enVert{H^\theta(x,Du,\mu)}_{\s{C}^{\beta,\frac{\beta}{2}}(Q_T)}}\leq C\theta.$$
\end{corollary}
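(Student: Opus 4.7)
The plan is to recognize the Hamilton--Jacobi equation \eqref{hjb} as an instance of the abstract fractional parabolic equation \eqref{eq: fractional parabolic} with source $f(x,t) = H^\theta(x, Du(x,t), \mu(t))$ and terminal data $\theta u_T$, and then apply Lemma \ref{lem: full holder} directly. A time reversal $v(x,t) := u(x, T-t)$ matches the sign conventions between \eqref{hjb} and \eqref{eq: fractional parabolic}, and it leaves the $\s{C}^{\beta,\frac{\beta}{2}}(Q_T)$ seminorms of $\partial_t u$ and $(-\Delta)^s u$ unchanged. This step produces, after undoing the time reversal,
\[
\enVert{(-\Delta)^s u}_{\s{C}^{\beta,\frac{\beta}{2}}(Q_T)}+\enVert{\partial_t u}_{\s{C}^{\beta,\frac{\beta}{2}}(Q_T)} \;\leq\; C\del{\enVert{\theta u_T}_{\s{C}^{2s+\beta}(\bb{T}^d)}+\enVert{H^\theta(x,Du,\mu)}_{\s{C}^{\beta,\frac{\beta}{2}}(Q_T)}},
\]
which is exactly the first of the two claimed inequalities once the factor $\theta$ is pulled out of the first term on the right.

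To close the argument and obtain the second inequality, I would invoke Corollary \ref{cor: H holder} to bound the source term by $C\theta$, and use hypothesis \ref{hypo:Monogregx} to bound $\enVert{u_T}_{\s{C}^{2s+\beta}(\bb{T}^d)}$. The latter step requires that $2s+\beta \leq 2 + \beta_0$, which is possible by shrinking $\beta$ if necessary; since $s \in (\tfrac12, 1)$ and $\beta_0 > 0$, the value $\beta = \min\{\beta_0, \beta_0 + 2 - 2s\}$ (and also respecting the requirement from Corollary \ref{cor: H holder} and the condition that $2s+\beta$ is not an integer) works. Adding the bounds gives $\leq C\theta$.

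I do not anticipate a real obstacle: this is essentially a packaging step that combines Lemma \ref{lem: full holder} with the previously established Hölder regularity of the Hamiltonian along the solution. The only care point is a compatibility check between the Hölder exponent $\beta$ appearing in Lemma \ref{lem: full holder} and the Hölder exponent of the terminal data supplied by \ref{hypo:Monogregx}, which is handled by choosing $\beta$ small enough.
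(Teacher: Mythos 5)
Your proposal is correct and follows essentially the same route as the paper, which proves this corollary in one line by combining Lemma \ref{lem: full holder} (applied to \eqref{hjb} with source $f=H^\theta(x,Du,\mu)$ and terminal data $\theta u_T$) with Corollary \ref{cor: H holder} and \ref{hypo:Monogregx}. Your additional remarks on the time reversal and on choosing $\beta$ small enough so that $\s{C}^{2+\beta_0}\subset\s{C}^{2s+\beta}$ are sensible care points that the paper leaves implicit.
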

\begin{proof}
    This follows from Lemma \ref{lem: full holder} and Corollary \ref{cor: H holder}.
\end{proof}

	\subsection{Semi-concavity for the HJ equation}\label{sub: sc}

	We analyze semi-concavity properties of solutions of
	
	\begin{equation}\label{hjb1}
		\begin{cases}
			-\partial_t u+(-\Delta)^s u +H^\theta(x,D u,\mu)=0 &\mbox{on }Q_T\\
			u(x,T)=\theta u_T(x)&\mbox{in }\mathbb{T}^d
		\end{cases}
	\end{equation}
	
	We prove that $u$ is semi-concave with constant that depends on the data.

		

    \begin{lemma}\label{lem: semi-con}
        For any $C>0$ there is a constant $C_1=C_1(C)$ such that if $L:[0,T]\times\bb{R}^d\times\bb{R}^d\times\mathcal{P}\del{\bb{R}^d\times\bb{R}^d}\to\bb{R}$ and $u_T:\bb{R}^d\to\bb{R}$ are continuous and such that
        \begin{equation}
            \enVert{L^\theta(\cdot,\alpha,\mu)}_{\s{C}^2}\leq C\theta \ \forall \alpha,\mu \ \text{s.t.}~\abs{\alpha} \leq C, \Lambda_\infty(\mu) \leq C, \qquad \text{and} \qquad\enVert{u_T}_{\s{C}^2}\leq C,
        \end{equation}
        then for each $\theta\in(0,1]$, \eqref{hjb1} has a unique bounded continuous viscosity solution with bounded gradient $D_x u$ which is given by the representation formula:
        \begin{equation}
            u(x,t)=\inf_{\alpha\in L^{\tilde{q}}([t,T],\bb{R}^d)}\bb{E}\sbr{\int_t^T L^\theta(x(\tau),\alpha(\tau),\mu(\tau))\dif \tau+\theta u_T(x(T))}
        \end{equation}
        where trajectories $x(\tau)$ are given by
        \begin{equation}
        \begin{cases}
           \dif x(\tau)=\alpha(\tau)\dif \tau+\dif J(\tau)\\
           x(t)=x.
        \end{cases}
        \end{equation}
        Here $J(\tau)$ is a $d$-dimensional, $2s$-stable pure jumps L\'evy process with associated L\'evy measure given by $\nu(\dif z)=C(d,s)\frac{\dif z}{|z|^{d+2s}}.$
        
        Moreover, $u$ satisfies the concavity estimate
        \begin{equation}\label{eq: sc}
             D^2_{xx}u\leq C_1\theta I_d    
        \end{equation}
        in the sense of distributions.
     \end{lemma}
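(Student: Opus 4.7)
The plan is to first establish existence, uniqueness, and the stochastic control representation formula, and then obtain the semiconcavity estimate from the representation via a synchronous coupling of trajectories.

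For the first part, equation \eqref{hjb1} is a classical stochastic optimal control problem whose generator is $(-\Delta)^s$ plus a transport term. Under the stated $C^2$ bounds on $L^\theta$ and $u_T$ and the coercivity and boundedness properties from \ref{hypo:Lstrconv}--\ref{hypo:Lbound}, existence of a bounded continuous viscosity solution together with the value-function identity follows from a dynamic programming argument adapted to the Lévy jump setting (as in the references on fractional HJB equations cited in Section \ref{sec:prelim}). Uniqueness is a standard comparison principle for fractional viscosity solutions. The gradient bound follows as in Lemma \ref{lem: grad u bound}, and by the coercivity of $L^\theta$ one may restrict the infimum to controls with $\|\alpha\|_\infty \leq C$ without changing $u$.

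For the semiconcavity estimate, I would use the classical synchronous coupling argument. Fix $t \in [0,T]$, $x_0, x_1 \in \bb{T}^d$, $\lambda \in [0,1]$, set $x_\lambda = \lambda x_1 + (1-\lambda) x_0$, and for $\varepsilon > 0$ choose an admissible $\alpha^\ast$ which is $\varepsilon$-optimal for $u(x_\lambda, t)$. Define three trajectories driven by the same control $\alpha^\ast$ and the same realization of the Lévy process $J$:
\begin{equation}
    x^i(\tau) = x_i + \int_t^\tau \alpha^\ast(\sigma) \dif \sigma + J(\tau) - J(t), \qquad i \in \{0, 1, \lambda\}.
\end{equation}
The crucial observation is that $x^i(\tau) - x^\lambda(\tau) = x_i - x_\lambda$ for all $\tau \in [t,T]$ almost surely: the jump component cancels exactly. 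Using $\alpha^\ast$ as an admissible (not necessarily optimal) control for $u(x_0, t)$ and $u(x_1, t)$ yields
\begin{equation}
    \lambda u(x_1,t) + (1-\lambda) u(x_0,t) - u(x_\lambda,t) \leq \bb{E}\sbr{\int_t^T \Delta_\tau L^\theta \dif \tau + \theta \,\Delta u_T } + \varepsilon,
\end{equation}
where $\Delta_\tau L^\theta$ and $\Delta u_T$ denote the convex combinations in the $x$-argument of $L^\theta$ and $u_T$ evaluated along $x^i(\tau)$ and $x^i(T)$ respectively.

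Because $\|\alpha^\ast\|_\infty \leq C$ by coercivity and $\Lambda_\infty(\mu(\tau)) \leq C$ by Corollary \ref{cor: moment est}, the hypothesis $\|L^\theta(\cdot,\alpha,\mu)\|_{\s{C}^2} \leq C\theta$ applies along the trajectory. A second-order Taylor expansion in $x$ gives
\begin{equation}
    \Delta_\tau L^\theta \leq C\theta \lambda(1-\lambda) |x_1 - x_0|^2, \qquad \Delta u_T \leq C \lambda(1-\lambda) |x_1 - x_0|^2.
\end{equation}
Letting $\varepsilon \to 0$ yields $\lambda u(x_1,t) + (1-\lambda) u(x_0,t) - u(x_\lambda,t) \leq C_1 \theta \lambda(1-\lambda) |x_1-x_0|^2$, which is equivalent to \eqref{eq: sc} in the sense of distributions. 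The main obstacle is the rigorous verification theorem linking the viscosity solution to the value function of the jump-diffusion control problem; once that is in place, the coupling argument is clean precisely because the jump noise drops out of the differences $x^i(\tau) - x^\lambda(\tau)$.
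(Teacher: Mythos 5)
Your proposal is correct and follows essentially the same route as the paper: existence, uniqueness, and the value-function representation are obtained by citing standard fractional viscosity-solution results together with dynamic programming, and the semiconcavity is proved via an $\varepsilon$-optimal control at $x_\lambda$ run along synchronously coupled trajectories (same control, same realization of $J$, so the jump part cancels in the differences), followed by a second-order Taylor estimate using the $\mathcal{C}^2$ bounds on $L^\theta$ and $u_T$ — this is exactly the paper's inequality \eqref{eq: lambdaf}. The only cosmetic difference is that you make explicit the restriction to bounded controls before invoking the $\mathcal{C}^2$ hypothesis, a point the paper leaves implicit.
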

     \begin{proof}
        First note that the solution $u$ to \eqref{hjb1} has a unique bounded continuous viscosity solution with bounded graident by \cite[Theorm 3]{droniou2006fractal}; see also \cite[Corollary 3.1]{barles2015regularity}. (For an overview of viscosity solutions, see e.g.~\cite{crandall1992user}).
        We turn to the semiconcavity estimate.
        Our proof uses a similar strategy as in \cite[Section 4.1]{cardaliaguet2010notes}.
          Now by the Markov property, the value function
         \begin{equation}
            v(x,t)=\inf_{\alpha\in L^{\tilde{q}}([t,T],\bb{R}^d)}\bb{E}\sbr{\int_t^T L^\theta(x(\tau),\alpha(\tau),\mu(\tau))\dif \tau+\theta u_T(x(T))}
        \end{equation} 
        satisfies the dynamic programming principle, and thus $v$ is a bounded continuous viscosity solution to \eqref{hjb}. Therefore, $u=v.$
        
        Now we show the semi-concavity of $u$ with respect to the $x$ variable. Let $x,y,t\in\bb{R}^d\times\bb{R}^d\times[0,T],$ $\lambda\in(0,1),$ $x_\lambda=\lambda x+(1-\lambda)y.$ Also let $\alpha\in L^{\tilde{q}}([t,T],\bb{R}^d)$ be $\varepsilon$-optimal for $u(x_\lambda,t)$ and set $x_\lambda(s)=x_\lambda+\int_s^t\alpha(\tau)\dif \tau+\int_s^t\dif J(s).$ Note that for any function $f$ such that $\enVert{f}_{\s{C}^2(\bb{R}^d)}\leq C$, the following holds
         \begin{equation}\label{eq: lambdaf}
         \begin{split}
             &\lambda f(x_\lambda(s)+x-x_\lambda)+(1-\lambda)f(x_\lambda(s)+y-x_\lambda) \\
             &\leq\lambda f(x_\lambda(s))+\lambda(x-x_\lambda)\int_0^1D_xf(\theta x_\lambda(s)+\theta(x-x_\lambda))\dif \theta\\
             &+(1-\lambda) f(x_\lambda(s))+(1-\lambda)(y-x_\lambda)\int_0^1D_xf(\theta x_\lambda(s)+\theta(y-x_\lambda))\dif \theta\\
             &= f(x_\lambda(s))+\lambda(1-\lambda)(x-y)\int_0^1\sbr{D_xf(\theta x_\lambda(s)+\theta(x-x_\lambda))-D_xf(\theta x_\lambda(s)+\theta(y-x_\lambda))}\dif \theta\\
             &\leq f(x_\lambda(s))+C\lambda(1-\lambda)|x-y|^2
         \end{split}
         \end{equation}        
         Then applying the inequality from \eqref{eq: lambdaf} to $L$ and $u_T$ 
         \begin{equation}
         \begin{split}
             \lambda u(x,t)&+(1-\lambda)u(y,t)\\
             \leq& \lambda\bb{E}\sbr{\int_t^TL^\theta(x_\lambda(s)+x-x_\lambda,\alpha(s),\mu(s))\dif s+\theta u_T(x_\lambda(T)+x-x_\lambda)}\\
             &+(1-\lambda)\bb{E}\sbr{\int_t^TL^\theta(x_\lambda(s)+y-x_\lambda,\alpha(s),\mu(s))\dif s+\theta u_T(x_\lambda(T)+y-x_\lambda)}\\
             \leq&\bb{E}\sbr{\int_t^TL^\theta(x_\lambda(s),\alpha(s),\mu(s))\dif s+\theta u_T(x_\lambda(T))}+C\theta\lambda(1-\lambda)|x-y|^2\\
             \leq &u(x_\lambda,t)+\varepsilon+C\theta\lambda(1-\lambda)|x-y|^2.
         \end{split}
         \end{equation}
         Thus $u$ is semi-concave with respect to $x$ and therefore satisfies \eqref{eq: sc}.
     \end{proof}

\section{Existence and uniqueness} \label{sec:existence}

For the reader's convenience, we recall the Leray-Schauder fixed point theorem:
\begin{theorem}[Leray-Schauder]
    Let $X$ be a Banach space and let $T: X \times [0,1] \rightarrow X$ be a continuous and compact mapping. Assume there exist $x_0 \in X$ and $C > 0$ so that $T(x,0) = x_0$ for all $x \in X$ and $\|x\|_X < C$ for all $(x,\tau) \in X \times [0,1]$ such that $T(x,\tau) = x$. Then there exists $x \in X$ such that $T(x,1) = x$.
\end{theorem}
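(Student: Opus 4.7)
The plan is to prove this via the Leray--Schauder degree, which is the natural tool for detecting fixed points of compact perturbations of the identity in an infinite-dimensional Banach space. Concretely, one first constructs a degree $\deg(I - K, U, y) \in \bb{Z}$ for every continuous compact map $K: \overline{U} \to X$, bounded open $U \subset X$, and $y \notin (I-K)(\partial U)$, enjoying two crucial properties: (i) homotopy invariance under continuous compact homotopies that stay away from $y$ on $\partial U$, and (ii) the nonvanishing criterion, namely that $\deg(I-K,U,y) \neq 0$ implies the existence of some $x \in U$ with $x - Kx = y$.

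With this machinery in hand, I would choose $R > C$ and work in $U = B_R(0) \subset X$. The hypothesis that any fixed point $x = T(x,\tau)$ satisfies $\|x\|_X < C < R$ is precisely the statement that $0 \notin (I - T(\cdot,\tau))(\partial B_R)$ uniformly in $\tau \in [0,1]$, so the continuity and compactness of $T$ together with homotopy invariance yield
\[
\deg(I - T(\cdot,0), B_R, 0) = \deg(I - T(\cdot,1), B_R, 0).
\]
At $\tau = 0$ the map $T(\cdot,0) \equiv x_0$ is constant, and since $x_0$ is itself a fixed point of $T(\cdot,0)$ the hypothesis forces $\|x_0\|_X < C < R$, so $x_0 \in B_R$ and the left-hand degree equals that of the translation $x \mapsto x - x_0$ on $B_R$ at the origin, which is $+1$. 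Invoking the nonvanishing criterion at $\tau = 1$ then produces the desired $x \in B_R$ with $T(x,1) = x$.

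The genuine obstacle in carrying this out from scratch is the construction of the Leray--Schauder degree itself: one typically approximates the compact operator by finite-rank maps, applies the Brouwer degree in a finite-dimensional subspace containing $x_0$ together with the range of the approximation, and passes to a limit after verifying independence of the choice of approximation. A more elementary alternative that sidesteps degree theory is to fix a radial retraction $r: X \to \overline{B_R}$ and apply Schauder's fixed point theorem to the continuous compact self-map $r \circ T(\cdot,1): \overline{B_R} \to \overline{B_R}$, then use the a priori bound to rule out fixed points on $\partial B_R$ (which in effect reproduces the homotopy argument above). Either path ultimately reduces the infinite-dimensional problem to a Brouwer-type statement, and in both the a priori bound $\|x\|_X < C$ is precisely what prevents the fixed points from escaping to the boundary and invalidating the argument.
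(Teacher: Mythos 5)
The paper does not prove this statement at all: it is the classical Leray--Schauder fixed point theorem, recalled verbatim as a tool and then applied to the map $\Psi$ in Section 4, so there is no in-paper proof to compare against. Your primary argument via the Leray--Schauder degree is the standard textbook proof of exactly this formulation and is correct as a sketch: the a priori bound forces $0 \notin (I - T(\cdot,\tau))(\partial B_R)$ for every $\tau$ once $R > C$, homotopy invariance of the degree for compact perturbations of the identity transfers the computation from $\tau = 1$ to $\tau = 0$, the constant map $T(\cdot,0) \equiv x_0$ has degree $\deg(I - x_0, B_R, 0) = \deg(I, B_R, x_0) = 1$ because $x_0$ is itself a fixed point of $T(\cdot,0)$ and hence lies in $B_R$, and the solution property of the degree yields the fixed point at $\tau = 1$; the only unproved ingredient is the construction of the degree itself, which you correctly identify as the real content and which is classical. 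One caveat: your ``more elementary alternative'' via a radial retraction and Schauder's theorem does not prove the theorem as stated. Applying Schauder to $r \circ T(\cdot,1)$ produces either a genuine fixed point or a boundary point $x \in \partial B_R$ with $x = \lambda T(x,1)$ for some $\lambda \in (0,1)$, and the hypothesis here gives no control over solutions of $x = \lambda T(x,1)$ --- it only bounds fixed points of $T(\cdot,\tau)$, and $\lambda T(x,1)$ need not equal $T(x,\tau)$ for any $\tau$. That retraction argument proves Schaefer's variant (where the homotopy is $\tau \mapsto \tau T(x,1)$ and the a priori bound is assumed along that specific family), so it does not ``in effect reproduce the homotopy argument'' for a general homotopy with $T(\cdot,0) \equiv x_0$; for the statement used in this paper the degree-theoretic route (or a finite-rank approximation argument reducing to Brouwer) is the one that works.
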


We are now ready to state our result on existence of solutions.
\begin{theorem}[Main result]
	Let $s\in(\frac{1}{2},1)$. Suppose Assumptions \ref{hypo:Lstrconv}-\ref{hypo:Lbound}, \ref{DpmuH}-\ref{DxxH}, and \ref{hypo:Monogregx} hold. Then there exists a solution $(u,m,\mu)$ to \eqref{eq:MFG}.
\end{theorem}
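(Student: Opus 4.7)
The plan is to apply Leray--Schauder to the $\theta$-homotopy \eqref{eq:MFGCtheta}. Concretely, we define a compact continuous map $T : X \times [0,1] \to X$ on a Banach space $X$ so that fixed points of $T(\cdot,\theta)$ correspond to classical solutions of \eqref{eq:MFGCtheta}. A natural choice is $X = \s{C}^0(\bar Q_T; \bb{R}^d) \times \s{C}^0([0,T]; L^2(\bb{T}^d))$, whose elements $(w,n)$ play the role of candidates for $(Du, m)$. Given $(w,n,\theta)$, we build $T(w,n,\theta)$ in three stages: first, for each $t$, Lemma~\ref{lem: fixmu} applied with $p = w(\cdot,t)$ and $m = n(t)$ yields a unique auxiliary $\mu_{w,n}(t)$ satisfying \eqref{eq:MFGCtheta}(iii); second, we solve the HJ equation \eqref{hjb1} with coupling $\mu_{w,n}$ to obtain a classical $u$; third, we solve the fractional FP equation with drift $-D_p H^\theta(\cdot, Du, \mu_{w,n})$ to obtain a new density $m$. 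We then set $T(w,n,\theta) := (Du, m)$.

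The three hypotheses of Leray--Schauder are then verified as follows. At $\theta = 0$ we have $H^0 \equiv 0$ and $u(\cdot,T) = 0$, so the HJ equation produces $u \equiv 0$ and the FP equation reduces to pure fractional heat flow from $m_0$; thus $T(\cdot, 0)$ is constant. For compactness, the a priori estimates of Section~\ref{sec:a priori} show that the output lies in $\s{C}^{\beta,\beta/(2s)}(\bar Q_T;\bb{R}^d) \times L^\infty(Q_T)$ (via Corollary~\ref{cor: Du Cbeta}, Lemma~\ref{lem: grad holder}, and Proposition~\ref{prop:fp est}), which embeds compactly into $X$ by Arzel\`a--Ascoli and Aubin--Lions. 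For continuity, given $(w_k, n_k) \to (w, n)$ in $X$, we argue in stages: $\mu_{w_k, n_k} \to \mu_{w,n}$ uniformly in $W_r$ by the Wasserstein stability implicit in Lemma~\ref{lem: fixmu} combined with the contraction constant $C'_R < 1$ from \ref{DpmuH}; then the HJ solutions $u_k \to u$ with $Du_k \to Du$ in $\s{C}^0$ by $L^p$-stability for the fractional parabolic equation together with the uniform H\"older bounds of Section~\ref{sec:a priori}; and finally the FP solutions $m_k \to m$ in $\s{C}^0([0,T]; L^2)$ by Proposition~\ref{prop:fp est} and Aubin--Lions. For the uniform bound on fixed points, note that any fixed point $(w, n) = T(w, n, \theta)$ gives a solution $(u, m, \mu)$ of \eqref{eq:MFGCtheta}, so Lemma~\ref{lem: grad u bound} and Proposition~\ref{prop:fp est} yield $\enVert{w}_\infty + \enVert{n}_{L^\infty(Q_T)} \leq C$ with $C$ independent of $\theta$.

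The main obstacle will be the continuity step, because the map involves nonlinear compositions with $H^\theta$ and $D_p H^\theta$ across three sub-problems, and the topology of $X = \s{C}^0 \times \s{C}^0([0,T]; L^2)$ is barely strong enough to pass to the limit. The remedy is to bootstrap via the compactness step: bounded sequences in $X$ are, after one pass of $T$, uniformly bounded in the H\"older spaces provided by Section~\ref{sec:a priori}, so convergence upgrades to a stronger topology in which the stability estimates do close. Once continuity and compactness are in hand, Leray--Schauder produces a fixed point at $\theta = 1$, which is the desired solution of \eqref{eq:MFG}. Uniqueness then follows from the standard Lasry--Lions argument: pairing the difference of HJ equations against $m_1 - m_2$ and of FP equations against $u_1 - u_2$, the $(-\Delta)^s$ terms cancel by self-adjointness on the torus, and \ref{hypo:LMono} together with strict convexity \ref{hypo:Lstrconv} forces $\mu_1 = \mu_2$, whence $u_1 = u_2$ and $m_1 = m_2$ follow from their respective PDEs.
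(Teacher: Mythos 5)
Your overall strategy coincides with the paper's: a Leray--Schauder homotopy in $\theta$ for \eqref{eq:MFGCtheta}, with the map built in the same three stages (fix $\mu$ via Lemma \ref{lem: fixmu}, solve the HJ equation, solve the FP equation), compactness from the H\"older estimates of Section \ref{sec:a priori}, constancy at $\theta=0$, and uniform bounds on fixed points from Lemmas \ref{lem: u bdd}, \ref{lem: grad u bound}, \ref{lem: H bdd} and Proposition \ref{prop:fp est}. There are, however, two genuine gaps. First, your map is not well defined on the Banach space you propose. Lemma \ref{lem: fixmu}, and hence the construction of $\mu_{w,n}(t)$, the drift, and the FP step, requires $m(t)$ to be a probability measure, while a generic $n\in\s{C}^0([0,T];L^2(\bb{T}^d))$ is neither nonnegative nor of unit mass; the Leray--Schauder theorem needs $T(\cdot,\theta)$ defined, continuous and compact on the whole Banach space, not merely on the set of probability densities (which is not a linear space). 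The paper resolves precisely this point by composing with the normalization map $\rho$ (positive part, renormalized, shifted by the free fractional heat flow $\tilde m^0$), following \cite{kobeissi2022mean}; your construction needs some such retraction (or a Schauder argument on a closed convex subset), together with a check that fixed points of the modified map are still genuine solutions.

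Second, every appeal you make to Proposition \ref{prop:fp est} --- the $L^\infty$ bound on the new density, the compactness and continuity of the FP step, and the uniform bound $\enVert{n}_{L^\infty(Q_T)}\leq C$ at fixed points --- requires the hypothesis $\enVert{\sbr{\Div b}^-}_\infty\leq K$ for the drift $b=-D_pH^\theta(\cdot,D\bar u,\mu)$. This is not automatic: in the paper it comes from the semi-concavity estimate $D^2_{xx}\bar u\leq C\theta I_d$ of Lemma \ref{lem: semi-con}, combined with $D^2_{pp}H^\theta\geq 0$ and the bounds on $D^2_{px}H^\theta$ in \ref{DxxH}, which give \eqref{eq: divDpH}. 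Your proposal never invokes the semi-concavity section, so this hypothesis is unverified and the FP estimates you rely on do not yet apply. A smaller omission: you assert the HJ stage produces a classical solution, which at the fixed point requires the time regularity of $\mu$ (Proposition \ref{prop:mu holder}) and the bootstrap of Corollaries \ref{cor: H holder} and \ref{cor: u holder}; the ingredients are in Section \ref{sec:a priori}, but citing it wholesale glosses over this step.
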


\begin{proof} We plan to apply the Leray-Schauder theorem to prove existence of solutions. However, $\mathcal{P}(\bb{T}^d)$ is not a Banach space, so we follow the same strategy as in Section 6.1 of \cite{kobeissi2022mean}.
	Consider the map $\rho: \s{C}^0\left([0,T]; H_2^\varepsilon(\bb{T}^d)  \right) \rightarrow \s{C}^0\left([0,T]; H_2^\varepsilon(\bb{T}^d)  \right)$ for some $\varepsilon>0$ defined by
	\[ \rho(\tilde{m})(x,t) =\dfrac{\tilde{m}_{+}(x,t) -\int \tilde{m}_+(y,t)dy  }{ \max \left(1, \int \tilde{m}_{+}(y,t)dy \right) } + 1, \]
	where $\tilde{m}_{+}(x,t)=\max \left(0, \tilde{m}(x,t)\right)$. Also, $\tilde{m}^0$ is defined as the the unique weak solution of
	\[ \partial_t \tilde{m}^0  + (-\Delta)^s\tilde{m}^0 =0, \mbox{ on }(0,T)\times \bb{T}^d, \qquad \mbox{and }\tilde{m}^0(0, \cdot)=m_0.\]
	
	Now we construct our map $\Psi$ as follows. Let $\theta\in[0,1]$, $u\in \s{C}^{1,0}\left( \bb{T}^d\times [0,T]; \bb{R}  \right)$ and $\tilde{m}\in \s{C}^0\left([0,T]; H_2^\varepsilon(\bb{T}^d)  \right) $ for some $\varepsilon>0$ to be specified later.
	
	Define a probability measure $m= \rho(\tilde{m}+ \tilde{m}^0)$ and $(\mu, \alpha)\in \s{C}^0\left([0,T]; \mathcal{P}\left(\bb{T}^d\times \bb{R}^d\right)  \right)\times \s{C}^0\left( \bb{T}^d\times[0,T]; \bb{R}  \right)$ by
	\begin{align*}
		\alpha(x,t)&= -D_pH^\theta \left( x, D u(x,t), \mu(t)  \right), \\
		\mu(t) &= \left(I_d, \alpha(\cdot,t)\right)\# m(t)
	\end{align*}
	when $\theta>0$ and for $\theta=0$ we simply take $\alpha=0$ and $\mu(t)=m(t)\times\delta_0$.
	Recall from Corollary \ref{cor: moment est}, 
	\[\sup_{t\in[0,T]}\Lambda_{\tilde{q}}(\mu(t)) \leq C\theta.\]

    Define $\bar{u}\in \s{C}^{1,0}(\bb{T}^d\times[0,T];\bb{R})$ to be the unique viscosity solution (by \cite[Theorem 3]{droniou2006fractal}) of
    \begin{equation}
    -\partial_t\bar{u}+(-\Delta)^s\bar{u} + H^\theta(x,D \bar u,\mu(t)) = 0,
    \end{equation}
    with terminal condition $\bar u(x,T)=\theta u_T(x).$ We see that $\bar{u}$ is also the solution of a fractional parabolic equation with bounded right-hand side $-H^\theta(x,D \bar u,\mu(t))$, since by Assumption \eqref{eq:Hbound} we have
    \[\left| H^\theta(x,D\bar u,\mu)  \right|\leq C_0 \theta \left(1+|D\bar u|^{q}\right) + C_0 \Lambda_{\tilde{q}}(\mu)^{\tilde q} \leq C\theta,\]
    so by Lemma \ref{lem: grad holder},
    \begin{equation}\label{eq: Du holder}
        \enVert{D\bar{u}}_{\s{C}^{\beta,\frac{\beta}{2s}}(Q_T)}\leq C\theta
    \end{equation}
    for some $\beta\in(0,\beta_0)$.	
	
	Set $\bar\alpha = -D_pH^\theta(x,D \bar u,\mu(t))$. Note that by Assumption \eqref{eq:Hpbound},
    $$\abs{\bar \alpha}=\abs{D_pH^\theta(x,D\bar u,\mu)}\leq C_0 \theta \left(1+|D\bar u|^{q-1}\right) + C_0 \Lambda_{\tilde{q}}(\mu) \leq C\theta,$$
    Define $\bar{m}$ as the solution in sense of distributions of
    \begin{equation}
	\partial_t \bar{m} + (-\Delta)^s\bar{m} + \Div(\bar{\alpha} \bar{m}) =0,         
    \end{equation}
	with initial condition $\bar{m}(0, \cdot)= m_0$, where $\enVert{m_0}_{s-1+\varepsilon,2}\leq C$. By Proposition \ref{prop:fp est} if $\bar\alpha\in C_x^1(Q_T)$ such that 
 \begin{equation}
     \enVert{m_0}_\infty+\enVert{\bar\alpha}_\infty+\enVert{[\Div \bar\alpha]^-}_\infty\leq K,
 \end{equation}
then  $\bar{m}\in\bb{H}^s_2(Q_T)$ and $\partial_t\bar{m}\in \bb{H}_2^{-1}(Q_T)$. 
 Notice from the bounds on $D^2_{pp}H^\theta$ and $D^2_{px}H^\theta$, the convexity estimate $D^2_{pp} H^\theta \geq 0$, the semi-concavity estimate $D_{xx}^2 \bar u \leq C$ from Lemma \ref{lem: semi-con}, we have
 \begin{equation}\label{eq: divDpH}
          \Div\del{D_pH^\theta(x,D \bar u,\mu)} = \operatorname{tr}\del{D_{px}^2 H^\theta(x,D \bar u,\mu) + D_{pp}^2 H^\theta(x,D \bar u,\mu)D_{xx}^2 \bar u}\leq C\theta,
 \end{equation}
 for a finite constant $C>0$, so Proposition \ref{prop:fp est} holds, yielding $\enVert{\bar{m}}_{\mathcal{H}^{2s-1}_2(Q_T)}\leq\enVert{\bar{m}}_{\bb{H}_2^{s}(Q_T)}+\enVert{\partial_t\bar{m}}_{\bb{H}^{-1}_2(Q_T)}\leq C_K$. 
 Now $\enVert{m_0}_{s-1+\varepsilon_0,2}\leq C$ from \ref{hypo:Monogregx}. By Theorem \ref{thm: embedding} 
 \begin{equation}\label{eq: m holder}
 \enVert{\bar{m}}_{C^{\frac{\beta'}{s}-\frac{1}{2}}\del{[0,T];H^{2s-1-2\beta'}_2(\bb{T}^d)}}\leq C,
 \end{equation}
 for $\frac{s}{2}<\beta<s$.
 \begin{comment}
 They cite two theorems (thm 2.1 in 33, thm 6.29 in 37). to get that 
	$m\in \s{C}^{\frac{\beta}2,\beta}\left([0,T]\times\bb{T}^d\right)$
	for $\beta\in(0,\beta_0)$,
	and that its associated norm can be estimated from
	above by a constant which depends on
	$\lVert{\nabla_xu}\rVert_\infty$, $\beta$, $a$
	and the constants in the assumptions. Similar argument works with $\tilde{m}^0$ too, this is, $\tilde{m}^0\in \s{C}^{\frac{\beta}2,\beta}\left([0,T]\times\bb{T}^d\right)$. 
\end{comment}

Finally, take $\bar{\mu}(t)=\del{I_d,\bar\alpha(\cdot,t)}\#\bar{m}(t)$ for every $t\in [0,T]$.

 We can now define the map $\Psi: (\theta,u,\tilde{m})\mapsto(\bar{u},\bar{m}-\tilde{m}^0)$, from $\s{C}^{1,0}(\bb{T}^d\times[0,T];\bb{R})\times \s{C}^0\left([0,T]; H_2^\varepsilon(\bb{T}^d)  \right)$ onto itself for each $\theta\in\sbr{0,1}$.
 Suppose $(\theta_n,u_n,\tilde m_n)$ is a bounded sequence in $[0,1]\times\s{C}^{1,0}(\bb{T}^d\times[0,T];\bb{R})\times\s{C}^0\left([0,T]; H_2^\varepsilon(\bb{T}^d)  \right)$ and let $\Psi(\theta_n,u_n,\tilde m_n)=(\bar u_n,\bar m_n-\tilde m^0)$. For compactness, we must show $(\bar u_n,\bar m_n-\tilde m^0)$ converges to some $(\bar u,\bar m-\tilde m^0)\in \s{C}^{1,0}(\bb{T}^d\times[0,T];\bb{R})\times \s{C}^0\left([0,T]; H_2^\varepsilon(\bb{T}^d)  \right)$ up to a subsequence. Now from \eqref{eq: Du holder} and since $\theta_n$ is uniformly bounded,
 $$\enVert{D\bar u_n}_{\s C^{\beta,\frac{\beta}{2}}(Q_T)}\leq C\theta_n\leq C.$$
 Then by the Arzela-Ascoli theorem and uniform convergence of the gradient, there exists some $\bar u$ such that by passing to a subsequence $\bar u_n\to \bar u$ and $D (\bar u_n)\to D\bar u$ uniformly. Therefore $\bar u_n\to \bar u$ in $\s C^{1,0}(Q_T,\bb{R})$.

Now by Equation \eqref{eq: m holder}, we get uniform equicontinuity with respect to time of $\bar m_n$. Also, for $0<\varepsilon<2s-1-2\beta'<1$, $H^{2s-1-2\beta'}_2(\bb{T}^d)$ is relatively compact in $H^\varepsilon_2(\bb{T}^d)$. This can be seen by applying Corollary 2.4 of \cite{bellido25} with the Rellich-Kondrachov Theorem for Sobolev spaces. $W^{1,2}(\bb{T}^d)$ is compactly embedded in $L^2(\bb{T}^d)$, so the embedding 
$$H^{2s-1-2\beta'}_2(\bb{T}^d)=[W^{1,2}(\bb{T}^d),L^2(\bb{T}^d)]_{2s-1-2\beta'}\hookrightarrow[W^{1,2}(\bb{T}^d),L^2(\bb{T}^d)]_\varepsilon=H^\varepsilon_2(\bb{T}^d)$$
is compact. Thus by Lemma 1 of \cite{simon86}, there exists some $\bar m$ such that, up to passing to a subsequence, $\bar m_n\to \bar m$ and $\bar m-\tilde m^0\in \s C^0\del{[0,T];H^\varepsilon_2(\bb{T}^d)}$. Therefore the map $\Psi$ is compact.

For continuity of the map, assume $(\theta_n,u_n,\tilde m_n)\to(\theta,u,\tilde m)$ in $[0,1]\times\s{C}^{1,0}(\bb{T}^d\times[0,T];\bb{R})\times\s{C}^0\left([0,T]; H_2^\varepsilon(\bb{T}^d)  \right)$. It suffices to show that, up to passing to a subsequence, $\Psi(\theta_n,u_n,\tilde m_n)\to\Psi(\theta,u,\tilde m)$. From the previous argument, we may assume $\Psi(\theta_n,u_n,\tilde m_n)\to(\bar u,\bar m-\tilde m^0)$. Now set $\mu_n=\mu_{\theta_n,u_n,\tilde m_n}=(I_d,-D_pH^{\theta_n}(x,Du_n(x),\mu_n)\# m_n$ and $\mu=\mu_{\theta,u,\tilde m}=(I_d,-D_pH^{\theta}(x,Du(x),\mu)\# m$, where $m_n=\rho(\tilde m_n+\tilde m^0)$ and $m=\rho(\tilde m+\tilde m^0)$. By the continuity of the map $\rho$, we may say $m_n\to m$ in $\s P(\bb{T}^d)$. We aim to show $\mu_n\to\mu$ in $\s P(\bb{T}^d\times \bb{R}^d)$ endowed with the $r$-Wasserstein distance. 
 Now let $\bar{X}_n=\del{X_n,-D_pH^{\theta_n}(X_n,Du_n(X_n),\mu_n)}$ and $\bar{X}=\del{X,-D_pH^{\theta}(X,Du(X),\mu)}$, so that $\bar{X}_n\sim \mu_n$ and $\bar{X}\sim \mu$, where $X_n\sim m_n$ and $X\sim m$. Then using  \ref{DpmuH}, \ref{DxxH}, and \eqref{eq:Hpbound}, we get
    \begin{equation}
    \begin{split}
            \del{\bb{E}\abs{\bar{X}_n-\bar{X}}^r}^{1/r}\\
            &\hspace{-1in}\leq \del{\bb{E}\abs{{X}_n-{X}}^r}^{1/r} +\del{\bb{E}\abs{D_pH^{\theta_n}(X_n,Du_n(X_n),\mu_n)-D_pH^\theta(X,Du(X),\mu)}^r}^{1/r}\\
            &\hspace{-1in}\leq \del{\bb{E}\abs{{X}_n-{X}}^r}^{1/r}+C|\theta_n-\theta|+C\theta\del{\bb{E}\abs{{X}_n-{X}}^r}^{1/r}\\
            &\hspace{-.9in} +C\theta\del{\bb{E}\abs{Du_n(X_n)-Du(X_n)}^r}^{1/r}+C\theta\del{\bb{E}\abs{Du(X_n)-Du(X)}^r}^{1/r}+C_R'\theta W_r(\mu_n,\mu).
    \end{split}        
    \end{equation}
    So
    \begin{equation*}
        W_r(\mu_n,\mu)\leq C\del{\del{\bb{E}\abs{{X}_n-{X}}^r}^{1/r}+C|\theta_n-\theta|+\del{\bb{E}\abs{Du_n(X_n)-Du(X_n)}^r}^{1/r}+\del{\bb{E}\abs{Du(X_n)-Du(X)}^r}^{1/r}}.
    \end{equation*}
    Therefore, up to a subsequence, $W_r(\mu_n,\mu)\to 0$. Then,
    $$|H^{\theta_n}(x,D\bar{u}_n,\mu_n)-H^{\theta}(x,D\bar{u},\mu)|\to 0,$$
     and 
    $$|D_pH^{\theta_n}(x,D\bar{u}_n,\mu_n)-D_pH^{\theta}(x,D\bar{u},\mu)|\to 0.$$     
    Therefore $(\bar u,\bar m-\tilde m^0)$ is a solution to the given $(\theta,u,\tilde m)$ as desired, proving continuity.
    
 This map 
 satisfies $\Psi(0,u,\tilde{m})=(0,\tilde{m}^0)$ for any $(u,\tilde{m})$. Indeed, the fact that $\enVert{\alpha}_\infty\leq C\theta$, implies that as $\theta$ tends to $0$, $\bar{m}$ tends to $\tilde{m}^0$ and $\bar{u}$ tends to $0$. Therefore $\Psi$ is continuous at $\theta=0$.\\
 Moreover, if $\Psi(\theta,u,\tilde{m})=(u,\tilde{m})$, then $(u,\tilde{m})$ is uniformly bounded in $\s{C}^{1,0}(\bb{T}^d\times[0,T];\bb{R})\times \s{C}^0\left([0,T]; H_2^\varepsilon(\bb{T}^d)  \right)$. To see this note that from Lemma \ref{lem: u bdd} we get that if $u$ is a fixed point then
 \begin{equation}
	\norm{u}_{\infty,Q_T} \leq C\theta.
 \end{equation}
 Also, from Lemma \ref{lem: grad u bound} we get 
 \begin{equation}
     \enVert{D u}_\infty\leq C\theta.
 \end{equation}
 Therefore we get uniform bounds on $u\in \s{C}^{1,0}(\bb{T}^d\times[0,T];\bb{R})$. Now by Proposition \ref{prop:fp est}, $\enVert{\tilde m}_{\infty,Q_T}\leq C$ where $C$ depends on bounds on $m_0$, $D_pH^\theta(x,D u,\mu)$, and $\sbr{ \Div D_pH^\theta(x,D u,\mu)}^-$. By Lemma \ref{lem: H bdd},
 \begin{equation}
         \enVert{D_pH^\theta(x,D u,\mu)}_\infty\leq C\theta.
 \end{equation}

 That with inequality \eqref{eq: divDpH} yields
 \begin{equation}
     \enVert{m_0}_\infty +\enVert{D_pH^\theta(x,D u,\mu)}_\infty+\enVert{\sbr{ \Div D_pH^\theta(x,D u,\mu)}^-}_\infty\leq K,
 \end{equation}
 so the results of \ref{prop:fp est} hold. Thus if $\Psi(\theta,u,\tilde{m})=(u,\tilde{m})$, then $(u,\tilde{m})$ is uniformly bounded in $\s{C}^{1,0}( \bb{T}^d\times[0,T];\bb{R})\times \s{C}^0\left([0,T]; H_2^\varepsilon(\bb{T}^d)  \right)$.

 Thus, the fixed points of $\Psi(\theta)$ give the solutions to \eqref{eq:MFGCtheta}, $(u,\bar{m})$, where $\bar{m}=\rho(\tilde{m}+\tilde{m}^0)$, which are uniformly bounded. So by the Leray-Schauder fixed point theorem, there exists a solution to \eqref{eq:MFG}.
\end{proof}

\begin{theorem}[Uniqueness]
    There exists at most one solution of System \eqref{eq:MFG}.
\end{theorem}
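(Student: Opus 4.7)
The plan is to adapt the standard Lasry-Lions duality argument to the MFGC setting with nonlocal diffusion. Given two solutions $(u_1,m_1,\mu_1)$ and $(u_2,m_2,\mu_2)$ of \eqref{eq:MFG}, I would compute $\od{}{t}\int_{\bb{T}^d}(u_1-u_2)(m_1-m_2)\dif x$ using equations \eqref{eq:MFG}(i) and \eqref{eq:MFG}(ii). Because $(-\Delta)^s$ is self-adjoint on $\bb{T}^d$, the fractional diffusion terms cancel, and an integration by parts for the divergence term yields
\begin{equation*}
\od{}{t}\int(u_1-u_2)(m_1-m_2)\dif x = \int(m_1-m_2)(H_1-H_2)\dif x - \int(Du_1-Du_2)\cdot(D_pH_1\,m_1 - D_pH_2\,m_2)\dif x,
\end{equation*}
where $H_i:=H(x,Du_i,\mu_i)$ and $D_pH_i:=D_pH(x,Du_i,\mu_i)$.

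I would then split the right-hand side into pieces weighted by $m_1$ and by $m_2$ and apply the Legendre identities $H_i = -Du_i\cdot\alpha_i - L(x,\alpha_i,\mu_i)$ with $\alpha_i = -D_pH_i$. A short algebraic computation gives
\begin{equation*}
H_1 - H_2 + (Du_2-Du_1)\cdot D_pH_1 = L(x,\alpha_1,\mu_2) - L(x,\alpha_1,\mu_1) - r_1,
\end{equation*}
where $r_1 := L(x,\alpha_1,\mu_2) + Du_2\cdot\alpha_1 - L(x,\alpha_2,\mu_2) - Du_2\cdot\alpha_2 \geq 0$ by optimality of $\alpha_2$ for $\alpha\mapsto L(x,\alpha,\mu_2)+Du_2\cdot\alpha$, with equality only when $\alpha_1=\alpha_2$ thanks to the strict convexity \ref{hypo:Lstrconv}. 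A symmetric identity yields a non-negative $r_2$ from the $m_2$ piece. Using $\mu_i=(I_d,\alpha_i)_\sharp m_i$ to rewrite integrals against $m_i$ as integrals against $\mu_i$, the time derivative becomes
\begin{equation*}
\od{}{t}\int(u_1-u_2)(m_1-m_2)\dif x = -\int r_1\dif m_1 - \int r_2\dif m_2 - \int\sbr{L(x,\alpha,\mu_1)-L(x,\alpha,\mu_2)}\dif(\mu_1-\mu_2)(x,\alpha),
\end{equation*}
and the last integral is non-negative by the Lasry-Lions monotonicity \ref{hypo:LMono}, so the right-hand side is non-positive.

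Integrating over $[0,T]$, the left-hand side vanishes thanks to the shared initial datum $m_0$ and terminal datum $u_T$, so each of the three non-negative quantities on the right must have zero time-integral. In particular $r_i=0$ holds $m_i$-a.e., and strict convexity of $L$ then forces $\alpha_1=\alpha_2=:\alpha^*$ on $\operatorname{supp}(m_i)$. Consequently $m_1$ and $m_2$ both satisfy the same linear fractional Fokker-Planck equation $\partial_t m+(-\Delta)^s m - \Div(\alpha^* m)=0$ with common initial datum $m_0$; uniqueness for this linear equation (via an energy estimate in the spirit of Proposition \ref{prop:fp est}) gives $m_1=m_2$, and hence $\mu_1=\mu_2$. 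With a single shared $\mu$, the functions $u_1$ and $u_2$ solve the same fractional HJ equation with terminal condition $u_T$, so they coincide by the standard comparison principle for viscosity solutions of such equations (cf.~\cite{droniou2006fractal}). The main delicacy will be to confirm that the regularity furnished by Section \ref{sec:a priori} — in particular $Du_i\in\s{C}^{\beta,\beta/(2s)}$ together with $m_i\in\bb{H}_2^s$ and $\partial_t m_i$ in a negative-order space — suffices to rigorously justify the integration by parts underlying the duality computation above.
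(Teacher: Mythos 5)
Your proposal is correct and follows essentially the same route as the paper: the duality computation with $u_1-u_2$ against $m_1-m_2$, the Legendre identities $H_i=-Du_i\cdot\alpha_i-L(x,\alpha_i,\mu_i)$ with $\alpha_i=-D_pH_i$, strict convexity of $L$ in $\alpha$ (your optimality gaps $r_i$ are exactly the paper's convexity inequality), the Lasry--Lions monotonicity to kill the coupling term, and then uniqueness for the linear Fokker--Planck and Hamilton--Jacobi equations to conclude $m_1=m_2$, $\mu_1=\mu_2$, $u_1=u_2$. No substantive difference from the paper's argument.
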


\begin{proof}
    The proof is essentially the same as the one found in \cite{kobeissi2022mean}.
    Let $(u_i,m_i,\mu_i)$, $i = 1,2$ be two solutions.
    The key is that $u_1 - u_2$ can be validly used as a test function in the Fokker-Planck equation satisfied by both $m_1$ and $m_2$.
    Subtracting the results, we get
    \begin{equation*}
        \begin{aligned}
        0 =& \int_0^T\int_{\bb{T}^d} m_1(H(t,x,D_xu_1,\mu_1) - H(t,x,D_xu_2,\mu_2) + D_x(u_2 - u_1) \cdot D_pH(t,x,D_xu_1,\mu_1)) \dif x \dif t \\
        &+ \int_0^T\int_{\bb{T}^d} m_2(H(t,x,D_xu_2,\mu_2) - H(t,x,D_xu_1,\mu_1) + D_x(u_1 - u_2) \cdot D_pH(t,x,D_xu_2,\mu_2)) \dif x \dif t.
        \end{aligned}
    \end{equation*}
    Note that for $i = 1,2$, there exists a unique vector field $\alpha^{\mu_i}$ such that
    $$L(x,\alpha^{\mu_i},\mu_i) = D_xu_i \cdot D_pH(x,D_xu_i,\mu_i) - H(x,D_xu_i,\mu_i)$$
    and
    $$D_xu_i = -D_\alpha L(x,\alpha^{\mu_i},\mu_i).$$
    Thus,
    $$
    \begin{aligned}
        0 \leq& \int_0^T\int_{\bb{T}^d} m_1(L(x,\alpha^{\mu_2},\mu_2) - L(x,\alpha^{\mu_1},\mu_1) + D_\alpha L(x,\alpha^{\mu_2},\mu_2) \cdot (\alpha^{\mu_1}-\alpha^{\mu_2})) \dif x \dif t \\
        &+ \int_0^T\int_{\bb{T}^d} m_2(L(x,\alpha^{\mu_1},\mu_1) - L(x,\alpha^{\mu_2},\mu_2) + D_\alpha L(x,\alpha^{\mu_1},\mu_1) \cdot (\alpha^{\mu_2}-\alpha^{\mu_1})) \dif x \dif t.
    \end{aligned}
    $$
    Since $L$ is strictly convex,
    \begin{equation}
        L(x,\alpha_1,\mu) - L(x,\alpha_2,\mu) + D_\alpha L(x,\alpha_1,\mu) \cdot (\alpha_2 - \alpha_1) \leq 0
        \label{Eq: Convexity of L}
    \end{equation}
    with equality holding if and only if $\alpha_1 = \alpha_2$. Hence,
    $$
    \begin{aligned}
        0 \leq& \int_0^T\int_{\bb{T}^d} \bigg(m_1(L(x,\alpha^{\mu_1},\mu_2) - L(x,\alpha^{\mu_1},\mu_1)) + m_2(L(x,\alpha^{\mu_2},\mu_1) - L(x,\alpha^{\mu_2},\mu_2))\bigg)\dif x \dif t \\
        =& -\int_0^T\int_{\bb{T}^d \times \bb{R}^d} (L(x,\alpha,\mu_1) - L(x,\alpha,\mu_2)) \dif (\mu_1 - \mu_2)(x,\alpha)\dif t
    \end{aligned}
    $$
    By Assumption \ref{hypo:LMono}, this gives
    $$\int_0^T\int_{\bb{T}^d} \bigg(m_1(L(x,\alpha^{\mu_2},\mu_1) - L(x,\alpha^{\mu_1},\mu_1)) + m_2(L(x,\alpha^{\mu_1},\mu_2) - L(x,\alpha^{\mu_2},\mu_2))\bigg)\dif x \dif t = 0.$$
    By the condition for equality for \eqref{Eq: Convexity of L}, we get $|\{(t,x) \in Q : \alpha^{\mu_1} \neq \alpha^{\mu_2}, m_i \neq 0\}| = 0$ for $i = 1,2$. Therefore, $\alpha^{\mu_1} = \alpha^{\mu_2}$. By the uniqueness of solutions to the Fokker-Planck equation, $m_1 = m_2$. Therefore, $\mu_1 = (I,\alpha^{\mu_i}) \# m_i = \mu_2$. By uniqueness of solutions to the Hamilton Jacobi equation, $u_1 = u_2$.
\end{proof}

	\bibliographystyle{alpha}
    \bibliography{mybib}

\newcommand{\etalchar}[1]{$^{#1}$}
\begin{thebibliography}{ACD{\etalchar{+}}21}

\bibitem[ABL{\etalchar{+}}14]{achdou2014partial}
Yves Achdou, Francisco~J Buera, Jean-Michel Lasry, Pierre-Louis Lions, and Benjamin Moll.
\newblock Partial differential equation models in macroeconomics.
\newblock {\em Philosophical Transactions of the Royal Society of London A: Mathematical, Physical and Engineering Sciences}, 372(2028), 2014.

\bibitem[ACD{\etalchar{+}}21]{cardaliaguet2021cetraro}
Yves Achdou, Pierre Cardaliaguet, Fran{\c{c}}ois Delarue, Alessio Porretta, and Filippo Santambrogio.
\newblock {\em Mean Field Games: Cetraro, Italy 2019}, volume 2281.
\newblock Springer Nature, 2021.

\bibitem[App09]{applebaum2009levy}
David Applebaum.
\newblock {\em L{\'e}vy processes and stochastic calculus}.
\newblock Cambridge university press, 2009.

\bibitem[BCGS25]{bellido25}
Jos{\'e}~C Bellido, Javier Cueto, and Guillermo Garc{\'\i}a-S{\'a}ez.
\newblock Compact embeddings of bessel potential spaces.
\newblock {\em arXiv preprint arXiv:2506.01677}, 2025.

\bibitem[BFY13]{bensoussan2013mean}
Alain Bensoussan, Jens Frehse, and Phillip Yam.
\newblock {\em Mean field games and mean field type control theory}.
\newblock Springer, 2013.

\bibitem[BKLT15]{barles2015regularity}
Guy Barles, Shigeaki Koike, Olivier Ley, and Erwin Topp.
\newblock Regularity results and large time behavior for integro-differential equations with coercive hamiltonians.
\newblock {\em Calculus of Variations and Partial Differential Equations}, 54:539--572, 2015.

\bibitem[BS24]{bongini2024mean}
Mattia Bongini and Francesco Salvarani.
\newblock Mean field games of controls with dirichlet boundary conditions.
\newblock {\em ESAIM: Control, Optimisation and Calculus of Variations}, 30:32, 2024.

\bibitem[Car10]{cardaliaguet2010notes}
Pierre Cardaliaguet.
\newblock Notes on mean field games.
\newblock {\em from P.-L. Lions' lectures at College de France}, 2010.

\bibitem[CCD{\etalchar{+}}17]{cesaroni2017stationary}
Annalisa Cesaroni, Marco Cirant, Serena Dipierro, Matteo Novaga, and Enrico Valdinoci.
\newblock On stationary fractional mean field games.
\newblock {\em Journal de Math{\'e}matiques Pures et Appliqu{\'e}es}, 2017.

\bibitem[CD17a]{carmona2017probabilistic}
R~Carmona and F~Delarue.
\newblock {Probabilistic Theory of Mean Field Games: vol. I, Mean Field FBSDEs, Control, and Games}.
\newblock {\em Stochastic Analysis and Applications, Springer Verlag}, 2017.

\bibitem[CD17b]{carmona2017probabilisticII}
R~Carmona and F~Delarue.
\newblock {Probabilistic Theory of Mean Field Games: vol. II, Mean Field Games with Common Noise and Master Equations}.
\newblock {\em Stochastic Analysis and Applications. Springer Verlag}, 2017.

\bibitem[CEJ23]{chowdhury2023numerical}
Indranil Chowdhury, Olav Ersland, and Espen~R Jakobsen.
\newblock On numerical approximations of fractional and nonlocal mean field games.
\newblock {\em Foundations of Computational Mathematics}, 23(4):1381--1431, 2023.

\bibitem[CG19]{cirant2019existence}
Marco Cirant and Alessandro Goffi.
\newblock {On the existence and uniqueness of solutions to time-dependent fractional MFG}.
\newblock {\em SIAM Journal on Mathematical Analysis}, 51(2):913--954, 2019.

\bibitem[CIL92]{crandall1992user}
Michael~G Crandall, Hitoshi Ishii, and Pierre-Louis Lions.
\newblock User's guide to viscosity solutions of second order partial differential equations.
\newblock {\em Bulletin of the American Mathematical Society}, 27(1):1--67, 1992.

\bibitem[CJK24]{chowdhury2024fully}
Indranil Chowdhury, Espen~R Jakobsen, and Mi{\l}osz Krupski.
\newblock On fully nonlinear parabolic mean field games with nonlocal and local diffusions.
\newblock {\em SIAM Journal on Mathematical Analysis}, 56(5):6302--6336, 2024.

\bibitem[CL12]{chang2012stochastic}
Tongkeun Chang and Kijung Lee.
\newblock On a stochastic partial differential equation with a fractional laplacian operator.
\newblock {\em Stochastic Processes and Their Applications}, 122(9):3288--3311, 2012.

\bibitem[CL18]{cardaliaguet2017mfgcontrols}
Pierre Cardaliaguet and Charles-Albert Lehalle.
\newblock Mean field game of controls and an application to trade crowding.
\newblock {\em Mathematics and Financial Economics}, 12:335--363, 2018.

\bibitem[CLT25]{camilli2025learning}
Fabio Camilli, Mathieu Lauri{\`e}re, and Qing Tang.
\newblock Learning equilibria in cournot mean field games of controls.
\newblock {\em SIAM Journal on Control and Optimization}, 63(2):1407--1431, 2025.

\bibitem[CS15]{chan2015bertrand}
Patrick Chan and Ronnie Sircar.
\newblock Bertrand and {C}ournot mean field games.
\newblock {\em Applied Mathematics \& Optimization}, 71(3):533--569, 2015.

\bibitem[CS17]{chan2017fracking}
Patrick Chan and Ronnie Sircar.
\newblock Fracking, renewables, and mean field games.
\newblock {\em SIAM Review}, 59(3):588--615, 2017.

\bibitem[DI06]{droniou2006fractal}
J{\'e}r{\^o}me Droniou and Cyril Imbert.
\newblock Fractal first-order partial differential equations.
\newblock {\em Archive for Rational Mechanics and Analysis}, 182(2):299--331, 2006.

\bibitem[DNPV12]{di2012hitchhikerʼs}
Eleonora Di~Nezza, Giampiero Palatucci, and Enrico Valdinoci.
\newblock Hitchhikers guide to the fractional sobolev spaces.
\newblock {\em Bulletin des sciences math{\'e}matiques}, 136(5):521--573, 2012.

\bibitem[EJ21]{ersland2021fractionalmfg}
Olav Ersland and Espen~R. Jakobsen.
\newblock On fractional and nonlocal parabolic mean field games in the whole space.
\newblock {\em Journal of Differential Equations}, 301:428--470, 2021.

\bibitem[EJP25]{ersland2025long}
Olav Ersland, Espen~Robstad Jakobsen, and Alessio Porretta.
\newblock Long time behaviour of mean field games with fractional diffusion.
\newblock {\em arXiv preprint arXiv:2505.06183}, 2025.

\bibitem[G{\etalchar{+}}14]{gomes2014mean}
Diogo~A Gomes et~al.
\newblock Mean field games models--a brief survey.
\newblock {\em Dynamic Games and Applications}, 4(2):110--154, 2014.

\bibitem[GB18]{graber2018existence}
P~Jameson Graber and Alain Bensoussan.
\newblock {Existence and uniqueness of solutions for Bertrand and Cournot mean field games}.
\newblock {\em Applied Mathematics \& Optimization}, 77(1):47--71, 2018.

\bibitem[GIN21]{graber2021nonlocal}
P.~Jameson Graber, Vincenzo Ignazio, and Ariel Neufeld.
\newblock Nonlocal bertrand and cournot mean field games with general nonlinear demand schedule.
\newblock {\em Journal de Math\'ematiques Pures et Appliqu\'ees}, 2021.

\bibitem[GL22]{graber2022parameter}
P~Jameson Graber and Marcus Laurel.
\newblock Parameter sensitivity analysis for mean field games of production.
\newblock {\em Applied Mathematics \& Optimization}, 86(1):1--52, 2022.

\bibitem[GLL11]{gueant2011mean}
Olivier Gu{\'e}ant, Jean-Michel Lasry, and Pierre-Louis Lions.
\newblock Mean field games and applications.
\newblock In {\em Paris-{P}rinceton Lectures on Mathematical Finance 2010}, pages 205--266. Springer, 2011.

\bibitem[GM92]{garroni1992green}
Maria~Giovanna Garroni and Jos{\'e}~Luis Menaldi.
\newblock {\em Green functions for second order parabolic integro-differential problems}, volume 275 of {\em Pitman research notes in mathematics series}.
\newblock Longman Publishing Group, 1992.

\bibitem[GM18]{graber2018variational}
P~Jameson Graber and Charafeddine Mouzouni.
\newblock Variational mean field games for market competition.
\newblock In {\em PDE Models for Multi-Agent Phenomena}, volume~28 of {\em Springer INdAM Series}, 2018.

\bibitem[GM20]{graber2020commodities}
Philip~Jameson Graber and Charafeddine Mouzouni.
\newblock On mean field games models for exhaustible commodities trade.
\newblock {\em ESAIM: COCV}, 26:11, 2020.

\bibitem[GPV14]{gomes2014extended}
Diogo~A Gomes, Stefania Patrizi, and Vardan Voskanyan.
\newblock On the existence of classical solutions for stationary extended mean field games.
\newblock {\em Nonlinear Analysis: Theory, Methods \& Applications}, 99:49--79, 2014.

\bibitem[GR25]{graber2025meanfieldgamescontrols}
P.~Jameson Graber and Kyle Rosengartner.
\newblock Mean field games of controls with dirichlet \& neumann boundary conditions, 2025.

\bibitem[GS23]{graber2022master}
P.~Jameson Graber and Ronnie Sircar.
\newblock Master equation for cournot mean field games of control with absorption.
\newblock {\em Journal of Differential Equations}, 343:816--909, 2023.

\bibitem[GV16]{gomes2016extended}
Diogo~A Gomes and Vardan~K Voskanyan.
\newblock Extended deterministic mean-field games.
\newblock {\em SIAM Journal on Control and Optimization}, 54(2):1030--1055, 2016.

\bibitem[HCM07]{huang2007large}
Minyi Huang, Peter~E Caines, and Roland~P Malham{\'e}.
\newblock Large-population cost-coupled {LQG} problems with nonuniform agents: Individual-mass behavior and decentralized $\varepsilon$-nash equilibria.
\newblock {\em Automatic Control, IEEE Transactions on}, 52(9):1560--1571, 2007.

\bibitem[HMC06]{huang2006large}
Minyi Huang, Roland~P Malham{\'e}, and Peter~E Caines.
\newblock Large population stochastic dynamic games: closed-loop {McKean-Vlasov} systems and the nash certainty equivalence principle.
\newblock {\em Communications in Information \& Systems}, 6(3):221--252, 2006.

\bibitem[JR23]{jakobsen2023master}
Espen~Robstad Jakobsen and Artur Rutkowski.
\newblock The master equation for mean field game systems with fractional and nonlocal diffusions.
\newblock {\em arXiv preprint arXiv:2305.18867}, 2023.

\bibitem[Kob21]{kobeissi2021classical}
Ziad Kobeissi.
\newblock On classical solutions to the mean field game system of controls.
\newblock {\em Communications in Partial Differential Equations}, pages 1--36, 2021.

\bibitem[Kob22]{kobeissi2022mean}
Ziad Kobeissi.
\newblock Mean field games with monotonous interactions through the law of states and controls of the agents.
\newblock {\em Nonlinear Differential Equations and Applications NoDEA}, 29(5):52, 2022.

\bibitem[LL07]{lasry07}
Jean-Michel Lasry and Pierre-Louis Lions.
\newblock Mean field games.
\newblock {\em Japanese Journal of Mathematics}, 2(1):229--260, 2007.

\bibitem[Sim86]{simon86}
Jacques Simon.
\newblock Compact sets in the space {$L^p(0,T;B)$}.
\newblock {\em Annali di Matematica pura ed applicata}, 146(1):65--96, 1986.

\end{thebibliography}

\end{document}